\def\titlerunning#1{\gdef\titrun{#1}}
\def\author#1{\gdef\autrun{\def\and{\unskip, }#1}\gdef\@author{#1}}
\def\address#1{{\def\and{\\\hspace*{18pt}}\renewcommand{\thefootnote}{}%
\footnote {#1}}%
\markboth{\autrun}{\titrun}}
\def\email#1{e-mail: #1}
\def\subjclass#1{{\renewcommand{\thefootnote}{}%
\footnote{\emph{Mathematics Subject Classification (2010):} #1}}}
\def\keywords#1{\par\medskip
\noindent\textbf{Keywords.} #1}
\theoremstyle{plain}
\newtheorem{theorem}[subsection]{Theorem}
\newtheorem{lemma}[subsection]{Lemma}
\newtheorem{corollary}[subsection]{Corollary}
\newtheorem{proposition}[subsection]{Proposition}
\newtheorem*{theorem*}{Theorem}
\theoremstyle{definition}
\newtheorem{definition}[subsection]{Definition}
\newtheorem{remark}[subsection]{Remark}
\newtheorem*{remark*}{Remark}
\newtheorem*{example*}{Example}
\newtheorem*{openquestion*}{Open Question}
\numberwithin{equation}{section}
\def\de{\delta} 
\def\ep{\varepsilon} 
\def\et{\eta}
\def\la{\lambda} 
\def\si{\sigma} 
\def\ta{\tau} 
\def\ph{\varphi} 
\def\ch{\chi} 
\def\ps{\psi} 
\def\om{\omega} 
\def\De{\Delta} 
\def\Th{\Theta}
\def\Om{\Omega}
\def\o{\circ} 
\def\i{^{-1}} 
\def\x{\times}
\def\p{\partial}
\def\R{{\mathbb R}}
\def\exp{\operatorname{exp}}
\let\on=\operatorname
\let\wh=\widehat
\let\ol=\overline
\let\mb=\mathbb
\let\mc=\mathcal
\let\mf=\mathfrak
\newcommand{\ud}{\,\mathrm{d}}
\begin{document}

\baselineskip=17pt

\titlerunning{On Completeness of Groups of Diffeomorphisms}

\title{On Completeness of Groups of Diffeomorphisms}

\author{Martins Bruveris
\and 
Fran\c{c}ois-Xavier Vialard}

\date{\today}

\maketitle

\address{M. Bruveris, Department of Mathematics, Brunel University London, Uxbridge UB8 3PH, United Kingdom; \email{martins.bruveris@brunel.ac.uk}
\and
F.-X. Vialard, Université Paris-Dauphine,
Place du Maréchal de Lattre de Tassigny,
75775 Paris CEDEX 16, France;
\email{vialard@ceremade.dauphine.fr}
}

\subjclass{Primary 58D05; Secondary 58B20}

\begin{abstract}
We study completeness properties of the Sobolev diffeomorphism groups $\mc D^s(M)$  endowed with strong right-invariant Riemannian metrics when $M$ is $\R^d$ or a compact manifold without boundary. We prove that for $s > \dim M/2 + 1$, the group $\mc D^s(M)$  is geodesically and metrically complete and any two diffeomorphisms in the same component can be joined by a minimal geodesic. We then present the connection between the Sobolev diffeomorphism group and the \emph{large deformation matching} framework in order to apply our results to diffeomorphic image matching.

\keywords{Diffeomorphism groups, Sobolev metrics, strong Riemannian metric, completeness, minimizing geodesics}
\end{abstract}

\section{Introduction}

The interest in Riemannian geometry of diffeomorphism groups started with \cite{Arnold1966}, where it was shown that Euler's equations, describing the motion of an ideal, incompressible fluid, can be regarded as geodesic equations on the group of volume-preserving diffeomorphisms. The corresponding Riemannian metric is the right-invariant $L^2$-type metric. This was used in \cite{Ebin1970} to show the local well-posedness of Euler's equations in three and more dimensions.  Also following \cite{Arnold1966}, the curvature of the Riemannian metric was connected in \cite{Misiolek1993,Shkoller1998,Preston2004} to stability properties of the fluid flow. The Fredholmness of the Riemannian exponential map was used in \cite{Misiolek2010} to show that large parts of the diffeomorphism group is reachable from the identity via minimising geodesics.

Other equations that have been recognised as geodesic equations on the diffeomorphism groups include the Camassa--Holm equation \cite{Holm1993}, the Korteweg--de Vries equation \cite{Ovsienko1987, Segal1991}, the quasigeostrophic equation \cite{Ebin2012, Ebin2015}, the equations of a barotropic fluid \cite{Preston2013} and others; see \cite{Vizman2008,Bauer2014} for an overview. In \cite{Escher2011}, the Degasperis-Procesi equation is identified as being a geodesic equation for a particular right-invariant connection on the diffeomorphism group. 

\subsection*{Right-invariant Sobolev metrics} Let $M$ be either $\R^d$ or a compact manifold without boundary of dimension $d$. The group $\mc D^s(M)$, with $s > d/2+1$, consists of all $C^1$-diffeomorphisms of Sobolev regularity $H^s$. It is well-known that $\mc D^s(M)$ is a smooth Hilbert manifold and a topological group \cite{Inci2013}. Right-invariant Sobolev $H^r$-metrics on diffeomorphism groups can thus be described using two parameters: the order $r$ of the metric and the regularity $s$ of the group. Obviously one requires $r \leq s$ for the metric to be well-defined.

As far as the behaviour of Sobolev metrics is concerned, the regularity $s$ of the group is less important that the order $r$ of the metric. Many properties like smoothness of the geodesic spray, (non-)vanishing of the geodesic distance, Fredholmness of the exponential map are not present for $H^r$-metrics with $r$ small and then ``emerge'' at a certain critical value of $r$. For some, like the Fredholmness properties of the exponential map, the critical value is independent of the dimension of $M$, in other cases the independence is conjectured and in yet others, like the completeness results in this paper, the critical value does depend on the dimension. The range of admissible values for $s$ is in each case usually an interval bounded from below with the lower bound depending on $r$.

The study of Sobolev metrics is complicated by the fact that, for a given order $r$, there is no canonical $H^r$-metric, just like there is no canonical $H^r$-inner product on the space $H^r(M,\R)$. The topology is canonical, but the inner product is not. For $r \in \mb N$, a class of ``natural'' inner products can be defined using the intrinsic differential operations on $M$. They are of the form
\begin{equation}
\label{eq_def_metric_L}
\langle u , v \rangle_{H^r} = \int_M \langle u, L v \rangle \ud \mu\,,
\end{equation}
where $L$ is a positive, invertible, elliptic differential operator of order $2r$. For (possibly) non-integer orders, the most general family of inner products is given by pseudodifferential operators $L \in OPS^{2r}$ of order $2r$ within a certain symbol class. The corresponding Riemannian metric is
\[
G_\ph(X_\ph, Y_\ph) = \int_{M} \left\langle X_\ph \o \ph\i, L( Y_\ph \o \ph\i) \right\rangle \ud \mu\,,
\]
and it can be represented by the operator $L_\ph = R_{\ph\i}^\ast \o L \o R_{\ph\i}$ with $R_\ph X = X \o \ph$ denoting right-translation by $\ph$. Note however, that $\ph$ is not smooth, but only in $\mc D^s(M)$ and thus $L_\ph$ is not a pseudodifferential operator with a smooth symbol any more. Pseudodifferential operators with symbols in Sobolev spaces were studied for example in \cite{Adams1986, Adams1986b, Beals1984, Lannes2006}, but technical difficulties still remain.

\subsection*{Strong Sobolev metrics}
Historically most papers dealt with right-inva\-riant Sobolev metrics on diffeomorphism groups in the weak setting, that is one considered $H^r$-metrics on $\mc D^s(M)$ with $s > r$; a typical assumption is $s > 2r+d/2+1$, in order to ensure that $Lu$ is still $C^1$-regular. The disconnect between the order of the metric and the regularity of the group arose, because one was mostly interested in $L^2$ or $H^1$-metrics, but $\mc D^s(M)$ is a Hilbert manifold only when $s > d/2+1$. It was however noted already in \cite{Ebin1970} and again in \cite{Misiolek2010}, that the $H^s$-metric is well-defined and, more importantly, smooth on $\mc D^s(M)$, for integer $s$ when the inner product is  defined in terms of a differential operator as in \eqref{eq_def_metric_L}. The smoothness of the metric is not obvious, since it is defined via
\[
G_\ph(X_\ph, Y_\ph) = \langle X_\ph \o \ph\i, Y_\ph\o\ph\i \rangle_{H^s}
\]
and the definition uses the inversion, which is only a continuous, but not a smooth operation on $\mc D^s(M)$.

Higher order Sobolev metrics have been studied recently on diffeomorphism groups of the circle \cite{Constantin2003}, of the torus \cite{Kappeler2008} and of general compact manifolds \cite{Misiolek2010}. The sectional curvature of such metrics was analysed in \cite{Khesin2013b} and in \cite{Bauer2011b,Bauer2012d} the authors considered Sobolev metrics on the space of immersions, which contains the diffeomorphism group as a special case.

\subsection*{Diffeomorphic image matching}
Another application of strong Sobolev metrics on the diffeomorphism group is the field of computational anatomy and diffeomorphic image matching \cite{Grenander1998}. Given two images, represented by scalar functions $I, J: \R^d \to \R$, diffeomorphic image registration is the problem of solving the minimization problem
\[
\mc J(\ph) = \on{dist}(\on{Id}, \ph) + S(I \o \ph\i, J)\,,
\]
over a suitable group of diffeomorphisms; here $S$ is a similarity measure between images, for example the $L^2$-norm, and $\on{dist}$ is a distance between diffeomorphisms \cite{Beg2005}. In the large deformation matching framework this distance is taken to be the geodesic distance of an underlying right-invariant Riemannian metric on the diffeomorphism group. Thus Sobolev metrics comprise a natural family of metrics to be used for diffeomorphic image registration.

\subsection*{Completeness}
The contributions of this paper are twofold. First we want to show that strong, smooth Sobolev metrics on $\mc D^s(M)$ are geodesically and metrically complete and that there exist minimizing geodesics between any two diffeomorphisms. We recall here that the Hopf--Rinow theorem is not valid in infinite dimensions, namely Atkin gives in \cite{Atkin1975} an example of a geodesically complete Riemannian manifold where  the exponential map is not surjective. For the Sobolev diffeomorphism group with $s>d/2+1$, the best known result can be found in \cite[Thm. 9.1]{Misiolek2010} which is an improvement of the positive result of Ekeland \cite{Ekeland1978}.

Geodesic completeness was shown for the diffeomorphism group of the circle in \cite{Escher2014b} and in weaker form on $\R^d$ in \cite{Trouve2005a} and \cite{Michor2013}. Metric completeness and existence of minimizing geodesics in the context of groups of Sobolev diffeomorphisms and its subgroups is---as far as we know---new. We prove the following theorem:

\begin{theorem*}
Let $M$ be $\R^d$ or a closed manifold and $s > d/2+1$. If $G^s$ is a smooth, right-invariant Sobolev-metric of order $s$ on $\mc D^s(M)$, then
\begin{enumerate}
\item
$(\mc D^s(M), G^s)$ is geodesically complete;
\item
$(\mc D^s(M)_0, \on{dist}^s)$ is a complete metric space;
\item
Any two elements of $\mc D^s(M)_0$ can be joined by a minimizing geodesic.
\end{enumerate}
\end{theorem*}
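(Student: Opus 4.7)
The plan is to prove the three statements in sequence, each building on an \emph{a priori} control of the Eulerian velocity provided by energy conservation.

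For (i), since $G^s$ is smooth and right-invariant on the Hilbert manifold $\mc D^s(M)$, the geodesic spray is smooth, and local existence of geodesics is immediate from ODE theory on Banach manifolds. To obtain global existence I would pass to the Eulerian side: writing $u(t) = \dot\ph(t) \o \ph(t)\i$, the geodesic equation becomes an EPDiff-type equation
\[
\partial_t (L u) + \nabla_u (L u) + (\nabla u)^\ast (L u) + (\on{div} u)(L u) = 0\,,
\]
along which the energy $\langle u, L u\rangle_{L^2} = G^s(\dot\ph,\dot\ph)$ is conserved. Because $L$ is a positive, invertible, elliptic operator of order $2s$, this yields a uniform-in-time $H^s$-bound on $u(t)$. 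Since $s > d/2+1$ and thus $H^s \hookrightarrow C^1$, the flow $\dot\ph = u \o \ph$ admits a Gronwall estimate on $\|\ph(t)-\on{Id}\|_{H^s}$, while the identity $\partial_t \log |\det D\ph_t| = (\on{div}\, u)(\ph_t)$ keeps the Jacobian bounded away from zero on any finite interval. Together these preclude blow-up of $\ph(t)$ in $\mc D^s(M)$.

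For (ii), let $(\ph_n)$ be Cauchy for $\on{dist}^s$. Because $G^s$ is a strong metric (the inner product induces the $H^s$-topology on tangent spaces), $\on{dist}^s$ is locally equivalent to the $H^s$-distance on $\mc D^s(M)$. A Cauchy sequence therefore eventually lies in such a neighbourhood and is Cauchy in $H^s$, so it converges in the ambient Hilbert space to some $\ph$. The delicate point is that $\ph$ must remain a $C^1$-diffeomorphism. Using almost-minimizing paths $\ga_{n,m}$ between $\ph_n$ and $\ph_m$, right-invariance reduces this to the analysis in (i) applied at the identity: the velocity of $\ga_{n,m}$ has $L^2([0,1],H^s)$-norm controlled by $\on{dist}^s(\ph_n,\ph_m)$, hence $\log|\det D(\ph_n \o \ph_m\i)|$ is uniformly small in $L^\infty$. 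Passing to the limit keeps $\det D\ph$ bounded away from zero, so $\ph \in \mc D^s(M)_0$.

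For (iii), take a minimizing sequence of $C^1$-paths $\ga_n:[0,1]\to \mc D^s(M)_0$ joining $\ph_0$ to $\ph_1$, reparametrized to constant $G^s$-speed. Their Eulerian velocities $u_n(t) = \dot\ga_n(t)\o\ga_n(t)\i$ are uniformly bounded in $L^2([0,1],H^s(M))$, so pass to a weakly convergent subsequence $u_n \rightharpoonup u_\infty$. The flow map sending a velocity field in $L^2_t H^s$ to its integral curve in $\mc D^s(M)$ is sequentially continuous from the weak topology on velocities to the uniform-in-$t$ $H^s$-topology on paths (this is again the stability estimate from (i), now applied to a family of ODEs with weakly converging right-hand sides). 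The limit curve $\ga_\infty$ runs from $\ph_0$ to $\ph_1$, and weak lower semicontinuity of $\int_0^1 \|u(t)\|_{H^s}^2\,\ud t$ shows its length does not exceed $\on{dist}^s(\ph_0,\ph_1)$. A classical first-variation argument on the smooth, strong Hilbert manifold then promotes $\ga_\infty$ to a smooth minimizing geodesic. The main obstacle throughout is the one isolated in (ii) and reused in (iii): converting energy conservation for $u(t)$ into $L^\infty$-in-time control of $\det D\ph_t$, which is what forces the limit object to remain in the open subset of diffeomorphisms rather than degenerating into a merely continuous $H^s$-map.
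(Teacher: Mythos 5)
Your overall strategy (energy conservation for (i), Lipschitz comparison of $\on{dist}^s$ with the $H^s$-distance for (ii), the direct method with weakly convergent Eulerian velocities for (iii)) is in the right spirit, and for (iii) it is essentially the paper's route; but three steps as you state them have genuine gaps. The central one is in (ii): local equivalence of $\on{dist}^s$ with the $H^s$-distance, which indeed follows from strongness and smoothness of $G^s$ near each fixed point, comes with constants and neighbourhood sizes depending on that point, and this is not enough to transfer the Cauchy property --- along a $\on{dist}^s$-Cauchy sequence the constants may degenerate (any incomplete strong metric, e.g.\ a conformal metric on an open ball pinching at the boundary, satisfies your local equivalence, so the argument as written would prove too much). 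What is needed is a \emph{uniform} estimate $\|\ph-\ps\|_{H^s}\le C(r)\,\on{dist}^s(\ph,\ps)$ on metric balls of arbitrary radius $r$ (and its chart version on a closed $M$), and that is the technical heart of the matter: it rests on composition bounds $\|v\o\ph\|_{H^{s'}}\le C\|v\|_{H^{s'}}$ uniform over all $\ph$ that are time-one flows of vector fields with $\|u\|_{L^1}<r$, which in turn requires the existence and continuity of the flow map $\on{Fl}:L^1(I,\mf X^s(M))\to C(I,\mc D^s(M))$ for velocities merely integrable in time. You use such $\mc D^s$-valued flows of $L^2$-in-time fields throughout (almost-minimizing paths in (ii), the curve of the weak limit $u_\infty$ in (iii)) without establishing their existence; classical results cover only velocities continuous in time, and extending them is a theorem (approximation by smoother fields plus Carath\'eodory ODE theory), not a remark. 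Your $\log\det D\ph$ control only addresses the final ``the limit is still a diffeomorphism'' step, which on $\R^d$ can be had more cheaply from openness of $\mc D^s$ after normalising $\ph^1=\on{Id}$ by right-invariance.

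In (iii), the asserted weak-to-strong continuity of the flow map is false: take $u^n(t)=w_n$ with $\|w_n\|_{H^s}=1$ concentrating at frequency $n$, so that $u^n\rightharpoonup 0$ in $L^2([0,1],H^s)$ while $\|\on{Fl}_1(u^n)-\on{Id}\|_{H^s}$ remains of order one, although the flow of the weak limit is $\on{Id}$. What is true, and suffices, is pointwise-in-$x$, uniform-in-$t$ convergence of the flows, obtained by splitting off the term $\int_0^t \bigl(u^n-u\bigr)(\ta,\ph(\ta,x))\ud\ta$, handled by weak convergence and Arzel\`a--Ascoli, and applying Gronwall to the rest; this passes the endpoint constraint to the limit, and weak lower semicontinuity of the energy does the remainder, so your conclusion stands but the cited mechanism does not. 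Finally, in (i), conservation of energy bounds $\|u(t)\|_{H^s}$ and the Jacobian, but boundedness alone does not allow you to continue a geodesic past a finite maximal time: you would still need convergence of $(\ph(t),\dot\ph(t))$ in $T\mc D^s(M)$ as $t\to T$, or a uniform local existence time on bounded sets, neither of which is automatic in infinite dimensions. The clean way out, and the one taken in the paper, is to prove metric completeness first and invoke the fact that a strong, metrically complete Riemannian Hilbert manifold --- equivalently here, a strong right-invariant metric on a topological group with smooth right translations --- is geodesically complete.
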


We expect that the same methods of proof can also be applied to the subgroups $\mc D^s_\mu(M)$ and $\mc D^s_\om(M)$ of diffeomorphisms preserving a volume form $\mu$ or a symplectic structure $\om$.

The crucial ingredient in the proof is showing that for each $t$ the flow map
\begin{equation}
\label{eq:intro_flow}
\on{Fl}_t : L^1(I, \mf X^s(M)) \to \mc D^s(M)\,,
\end{equation}
assigning a vector field its flow at time $t$, exists and is continuous; see Sec.~\ref{sec:flow_def} for definitions. The existence was known for vector fields in $C(I, \mf X^s(M))$ and the continuity as a map into $\mc D^{s'}$ for $s'< s$ was shown in \cite{Inci2012}. We extend the existence result to vector fields that are $L^1$ in time and show continuity with respect to the manifold topology. The flow map allows us to identify the space of $H^1$-paths with the space of right-trivialized velocities,
\[
\mc D^s(M) \x L^2(I, \mf X^s(M)) \xrightarrow{\cong} H^1(I,\mc D^s(M)),
\quad
(\ph_0, u) \mapsto \left( t \mapsto \on{Fl}_t(u) \o \ph_0 \right)\,.
\]
The inverse map of the identification is given by $H^1(I,\mc D^s) \ni \ph \mapsto \left( \ph(0), \p_t \ph \o \ph\i \right)$.
Since $L^2(I,\mf X^s(M))$ is a Hilbert space, we can use variational methods to show the existence of minimizing geodesics.

In order to show metric completeness, we derive, in the case $M = \R^d$, the following estimate on the geodesic distance,
\[
\| \ph - \ps \|_{H^s} \leq C \on{dist}^s(\ph, \ps)\,,
\]
which is valid on a bounded metric $\on{dist}^s$-ball. In other words, the identity map between the two metric spaces
\[
\on{Id} : \left( \mc D^s(\R^d), \| \cdot \|_{H^s} \right) \to
\left( \mc D^s(\R^d), \on{dist}^s \right)
\]
is locally Lipschitz continuous. For compact manifolds we show a similar inequality in coordinate charts. The Lipschitz continuity implies that a Cauchy sequence for $\on{dist}^s$ is a Cauchy sequence for $\| \cdot \|_{H^s}$, thus giving us a candidate for a limit point. One then proceeds to show that the limit point lies in the diffeomorphism group and that the sequence converges to it with respect to the geodesic distance.

\subsection*{Applications to image matching}
The second contribution concerns the groups of diffeomorphisms introduced by Trouv\'e \cite{Trouve1998,Trouve2005a} for diffeomorphic image matching in the large deformation framework \cite{Beg2005}. In this framework one chooses a Hilbert space $\mc H$ of vector fields on $\R^d$ with a norm that is stronger than the uniform $C^1_b$-norm\footnote{The $C^1_b$-norm is the supremum norm on the vector field and the first derivative, $\| u\|_{C^1_b} = \| u \|_\infty + \| Du \|_\infty$.}, i.e., $\mc H \hookrightarrow C^1_b$ and considers the group $\mc G_{\mc H}$ of all diffeomorphisms, that can be generated as flows of vector fields in $L^2(I,\mc H)$, $I$ being a compact interval.

When $s>d/2+1$ the Sobolev embedding theorem shows that $H^s \hookrightarrow C^1_b$, allowing us to consider the group $\mc G_{H^s}$ as a special case of the construcion by Trouvé. It is not difficult to show, for $t$ fixed, the existence of the flow as a map
\[
\on{Fl}_t : L^2(I, \mc H) \to \on{Diff}^1(\R^d)
\]
into the space of $C^1$-diffeomorphisms. Thus we can view the existence of the flow map in the sense \eqref{eq:intro_flow} as a regularity result when $\mc H = H^s$. With the help of this regularity result we are able to show the following:

\begin{theorem*}
Let $s > d/2+1$. Then
$\mc G_{H^s} = \mc D^s(\R^d)_0$\,.
\end{theorem*}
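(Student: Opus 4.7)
The plan is to establish the two inclusions separately, using the two main ingredients already summarised in the introduction: the strong regularity of the flow map $\on{Fl}_t : L^1(I, \mf X^s) \to \mc D^s(\R^d)$ and the existence of minimising geodesics between any two elements of $\mc D^s(\R^d)_0$.

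For $\mc G_{H^s} \subseteq \mc D^s(\R^d)_0$, I would take $\ph \in \mc G_{H^s}$, so that by Trouv\'e's construction $\ph = \on{Fl}_1(u)$ for some $u \in L^2(I, H^s)$. Since $I$ is compact, Cauchy--Schwarz gives $L^2(I, \mf X^s) \hookrightarrow L^1(I, \mf X^s)$, and the flow regularity stated in \eqref{eq:intro_flow} applies: the curve $t \mapsto \on{Fl}_t(u)$ is a continuous path in $\mc D^s(\R^d)$ from $\on{Id}$ to $\ph$. Hence $\ph$ lies in the identity component.

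For the reverse inclusion, I would take $\ph \in \mc D^s(\R^d)_0$ and apply part~(iii) of the completeness theorem to produce a minimising geodesic $\eta : I \to \mc D^s(\R^d)$ with $\eta(0) = \on{Id}$ and $\eta(1) = \ph$. Being a geodesic for a smooth strong Riemannian metric, $\eta$ is in particular an $H^1$-curve in $\mc D^s(\R^d)$. The identification
\[
H^1(I, \mc D^s) \;\cong\; \mc D^s \times L^2(I, \mf X^s)
\]
recalled in the introduction then associates to $\eta$ the right-trivialised velocity $u(t) := \p_t \eta(t) \o \eta(t)\i \in L^2(I, \mf X^s)$. By the very definition of this identification $\eta(t) = \on{Fl}_t(u)$, so in particular $\ph = \on{Fl}_1(u) \in \mc G_{H^s}$.

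The substantive work is carried by the two inputs above; the theorem itself is a clean verification. The one point requiring care is the first inclusion: I must use the \emph{strong} continuity $\on{Fl}_t : L^1(I, \mf X^s) \to \mc D^s$ established earlier in the paper, rather than the weaker continuity into $\mc D^{s'}$ with $s' < s$ available from \cite{Inci2012}; without it one could not even conclude that $\mc G_{H^s}$ is contained in $\mc D^s(\R^d)$, let alone in its identity component. Granted that, the theorem provides the natural dictionary between Trouv\'e's large-deformation group and the Sobolev diffeomorphism group in the regime $s > d/2 + 1$.
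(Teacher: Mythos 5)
Your first inclusion, $\mc G_{H^s} \subseteq \mc D^s(\R^d)_0$, is exactly the paper's argument: the flow of an $L^1$ (hence of an $L^2$) vector field is a continuous curve in $\mc D^s(\R^d)$ starting at $\on{Id}$ by Thm.~\ref{thm:d2plus1_flow}, so its endpoint lies in the identity component. That half is fine.

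The reverse inclusion, however, has a genuine gap as you have set it up. You invoke the existence of minimizing geodesics (part (iii) of the completeness theorem, i.e.\ Thm.~\ref{thm:geodesic_bvp}), but that result is proved only under hypothesis \eqref{eq:hyp_strong}, namely that the right-invariant $H^s$-metric is \emph{smooth} on $\mc D^s(\R^d)$. The equivalence theorem is stated for every real $s > d/2+1$ with no such assumption, and the paper explicitly records as an open question whether smooth strong metrics exist for non-integer $s$; so your argument only covers the cases where \eqref{eq:hyp_strong} is known to hold. There is also a subtler circularity: even granting \eqref{eq:hyp_strong}, the direct method behind Thm.~\ref{thm:geodesic_bvp} needs the constraint set $\Om_{\on{Id},\ph}H^1$ to be nonempty, i.e.\ one must already know that $\ph$ can be joined to $\on{Id}$ by an $H^1$-path with $H^s$-valued right-trivialized velocity --- and producing such a path is essentially the content of the inclusion you are trying to prove. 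The paper avoids both issues with a metric-free argument: for $\ps$ in a convex neighbourhood $U$ of $\on{Id}$ in $\mc D^s(\R^d)$ the linear interpolation $\ph(t) = (1-t)\on{Id} + t\ps$ is a path in $\mc D^s(\R^d)$ whose velocity $u(t) = \p_t\ph(t)\o\ph(t)\i$ lies in $C([0,1],H^s)$, so $U \subseteq \mc G_{H^s}$; since $\mc G_{H^s}$ is a group containing a neighbourhood of the identity, it contains all of $\mc D^s(\R^d)_0$. Replacing your geodesic step by this local-interpolation-plus-group argument (or by any direct construction of an $H^1$-path that does not pass through the Riemannian machinery) closes the gap and yields the theorem in the stated generality.
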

Here $\mc D^s(\R^d)_0$ is the connected component of the identity. This means that, if we choose $\mc H$ to be a Sobolev space, then the framework of Trouvé constructs the classical groups of Sobolev diffeomorphisms. As a consequence we obtain that $\mc G_{H^s}$ is a topological group and that the paths solving the image registration problem are smooth. We also obtain using the proximal calculus on Riemannian manifolds \cite{Azagra2005} that Karcher means of $k$ diffeomorphisms -- and more generally shapes -- are unique on a dense subset of the $k$-fold product $ \mc D^s \x \dots \x \mc D^s$. 

\section{The group $\mc D^s(\R^d)$}

The Sobolev spaces $H^s(\R^d)$ with $s \in \R$ can be defined in terms of the Fourier transform
\[ 
\mc F f(\xi) = (2\pi)^{-n/2} \int_{\R^n} e^{-i \langle  x,\xi\rangle} f(x) \ud x\,,
\]
and consist of $L^2$-integrable functions $f$ with the property that $(1+|\xi|^2)^{s/2} \mc F f$ is $L^2$-integrable as well. An inner product on $H^s(\R^d)$ is given by
\[
\langle f, g \rangle_{H^s} = \mathfrak{Re}\int_{\R^d} (1 + |\xi|^2)^s \mc F f(\xi) \overline{\mc F g(\xi)} \ud \xi\,.
\]

Denote by $\on{Diff}^1(\R^d)$ the space of $C^1$-diffeomorphisms of $\R^d$, i.e.,
\[
\on{Diff}^1(\R^d) = \{ \ph \in C^1(\R^d,\R^d) \,:\, \ph \text{ bijective, } \ph\i \in C^1(\R^d,\R^d) \}\,.
\] 
For $s > d/2+1$ and $s \in \R$ there are three equivalent ways to define the group $\mc D^s(\R^d)$ of Sobolev diffeomorphisms:
\begin{align*}
\mc D^s(\R^d) &= \{ \ph \in \on{Id} + H^s(\R^d,\R^d) \,:\, \ph \text{ bijective, }
\ph\i \in \on{Id} + H^s(\R^d,\R^d) \} \\
&= \{ \ph \in \on{Id} + H^s(\R^d,\R^d) \,:\, 
\ph \in \on{Diff}^1(\R^d) \} \\
&= \{ \ph \in \on{Id} + H^s(\R^d,\R^d) \,:\, 
\det D\ph(x) > 0,\, \forall x \in \R^d \}\,.
\end{align*}
If we denote the three sets on the right by $A_1$, $A_2$ and $A_3$, then it is not difficult to see the inclusions $A_1 \subseteq A_2 \subseteq A_3$. The equivalence $A_1 = A_2$ has first been shown in \cite[Sect. 3]{Ebin1970b} for the diffeomorphism group of a compact manifold; a proof for $\mc D^s(\R^d)$ can be found in \cite{Inci2013}. Regarding the inclusion $A_3 \subseteq A_2$, it is shown in \cite[Cor. 4.3]{Palais1959} that if $\ph \in C^1$ with $\det D\ph(x) > 0$ and $\lim_{|x |\to \infty} | \ph(x)| = \infty$, then $\ph$ is a $C^1$-diffeomorphism.

It follows from the Sobolev embedding theorem, that $\mc D^s(\R^d) - \on{Id}$ is an open subset of $H^s(\R^d,\R^d)$ and thus a Hilbert manifold. Since each $\ph \in \mc D^s(\R^d)$ has to decay to the identity for $|x|\to \infty$, it follows that $\ph$ is orientation preserving. More importantly, $\mc D^s(\R^n)$ is a topological group, but not a Lie group, since left-multiplication and inversion are continuous, but not smooth.

The space of vector fields on $\R^d$ is either $\mf X^s(\R^d)$ or $H^s(\R^d,\R^d)$ and we shall denote by $\mc D^s(\R^d)_0$ the connected component of the identity in $\mc D^s(\R^d)$.

\subsection{Boundedness of Composition}

We will use the following lemma in the later parts of the paper to estimate composition in Sobolev spaces. The first two parts are Cor. 2.1 and Lem. 2.7 of \cite{Inci2013}, the third statement is a slight refinement of \cite[Lem. 2.11]{Inci2013} and can be proven in the same way. Denote by $B_\ep(0)$ the $\ep$-ball around the origin in $H^s(\R^d,\R^d)$.

\begin{lemma}\label{CompositionsLemma}
Let $s > d/2+1$ and $0 \leq s' \leq s$.
\begin{enumerate}
\item
Given $\ps \in \mc D^s(\R^d)$ there exists $\ep > 0$ and $M>0$, such that $\ps + B_\ep(0) \subseteq \mc D^s(\R^d)$ and
\[
\inf_{x \in \R^d} \det D\ph(x) > M\quad \text{ for all } \ph \in \ps + B_\ep(0)\,.
\]
\item
\label{lem:item2_technical_composition}
Given $M, C > 0$ there exists $C_{s'} = C_{s'}(M, C)$, such that for all $\ph \in \mc D^s(\R^d)$ with
\[
\inf_{x \in \R^d} \det D\ph(x) > M\quad\text{and}\quad
\| \ph - \on{Id} \|_{H^s} < C\,,
\]
and all $f \in H^{s'}(\R^d)$,
\[
\| f \o \ph\|_{H^{s'}} \leq C_{s'} \| f \|_{H^{s'}}\,.
\]
\item
Assume additionally $s' > d/2$. Let $U \subset \mc D^s(\R^d)$  be a convex set and $M, C>0$ constants, such that
\[
\inf_{x \in \R^d} \det D\ph(x) > M\;\;\text{and}\;\;
\| \ph - \on{Id} \|_{H^s} < C\quad \text{for all } \ph \in U\,.
\]
Then there exists $C_{s'} = C_{s'}(M, C)$, such that for all $f \in H^{s'+1}(\R^d)$ and $\ph, \ps \in U$,
\[
\| f \o \ph - f \o \ps \|_{H^{s'}} \leq C_{s'} \| f \|_{H^{s'+1}} \| \ph - \ps \|_{H^{s'}}\,.
\]
\end{enumerate}
\end{lemma}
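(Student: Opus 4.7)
The plan hinges on the division into three parts. Parts (1) and (2) I quote directly from \cite{Inci2013}: the first from Cor.~2.1 there and the second from Lem.~2.7. The argument for (1) rests on the Sobolev embedding $H^s \hookrightarrow C^1_b$ (valid because $s > d/2+1$) to conclude that $\det D\ph$ depends continuously on $\ph$ in $C^0_b$, so the uniform positive lower bound on $\det D\ps$ (which exists because $\ps - \on{Id} \in H^s$ forces $\det D\ps \to 1$ at infinity) persists under small $H^s$ perturbations; positivity of $\det D\ph$, together with the inclusion $A_3 \subseteq A_2$ recalled above, keeps the perturbation inside $\mc D^s(\R^d)$. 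The argument for (2) is a change-of-variables calculation for the $L^2$ part followed by a Moser-type estimate to control the chain-rule expansion of higher derivatives using $\|\ph - \on{Id}\|_{H^s} < C$.

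For part (3), the genuine refinement, my strategy is to exploit the convexity of $U$. For $\ph, \ps \in U$ the segment $\ph_t := (1-t)\ps + t\ph$ lies in $U$ for all $t \in [0,1]$, so the two hypotheses of part (2) remain satisfied uniformly along the segment. I would then write
\[
f\o\ph - f\o\ps \;=\; \int_0^1 \frac{d}{dt}\bigl(f \o \ph_t\bigr)\,\ud t
\;=\; \int_0^1 \bigl(\nabla f \o \ph_t\bigr) \cdot (\ph - \ps)\,\ud t\,,
\]
take the $H^{s'}$-norm, apply Minkowski's integral inequality, and use that $H^{s'}$ is a Banach algebra (valid because $s' > d/2$) to split the integrand as $\| (\nabla f \o \ph_t)\cdot (\ph - \ps)\|_{H^{s'}} \le C_1 \|\nabla f \o \ph_t\|_{H^{s'}}\, \|\ph - \ps\|_{H^{s'}}$. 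Finally, since $f \in H^{s'+1}$ implies $\nabla f \in H^{s'}$ componentwise, I apply part (2) to each component of $\nabla f \o \ph_t$, obtaining the uniform bound $\|\nabla f \o \ph_t\|_{H^{s'}} \le C_2 \|f\|_{H^{s'+1}}$ valid for every $t \in [0,1]$. Combining the three estimates yields the desired inequality with $C_{s'} = C_1 C_2$.

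The main obstacle is conceptual rather than computational: I need the constant produced by part (2) to be independent of $t \in [0,1]$, and this is precisely where the hypothesis that the bounds $\det D\ph > M$ and $\|\ph - \on{Id}\|_{H^s} < C$ hold \emph{uniformly} on a \emph{convex} set $U$ becomes essential; without uniformity along the segment, the constant $C_2$ could a priori blow up as $t$ varies. Once uniformity is in hand, Minkowski's inequality and the Banach algebra estimate are routine and produce the final constant $C_{s'}$ depending only on $M$, $C$, $s$ and $s'$, as required.
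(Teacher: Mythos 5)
Your proposal is correct and follows essentially the same route as the paper: cite parts (1) and (2) from \cite{Inci2013}, then for part (3) use convexity to keep the segment $(1-t)\ps + t\ph$ inside $U$, write the difference via the fundamental theorem of calculus, and combine the Banach algebra property of $H^{s'}$ (since $s'>d/2$) with the uniform constant from part (2) applied to $\nabla f \o \ph_t$. The only cosmetic difference is that the paper first reduces to $f \in C^\infty_c(\R^d)$ by density, which is not strictly needed since $H^{s'+1}\hookrightarrow C^1_b$ already justifies the pointwise identity.
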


\begin{proof} For the sake of completeness we give a proof of the third statement. We may assume that $f \in C^\infty_c(\R^d)$, since $C^\infty_c(\R^d)$ is dense in $H^{s'+1}(\R^d)$. Introduce $\de \ph(x) = \ph(x) - \ps(x)$ and note that $\ph + t \de \ph \in U$ for any $0 \leq t \leq 1$. Since $\ph, \ps \in \on{Diff}^1_+(\R^d)$, we have for all $x \in \R^d$,
\begin{align*}
f \o \ph(x) - f \o \ps(x) &= \int_0^1 \frac{d}{dt}
\left( f \o (\ph+t \de \ph)(x) \right) \ud t \\
&= \int_0^1 Df\left((\ph+t \de\ph)(x)\right).\de \ph(x) \ud t\,.
\end{align*}
Hence
\begin{align*}
  \left\| f \o \ph - f \o \ps \right\|_{H^{s'}} &\leq C'_{s'}
\int_0^1 \left\| Df \o (\ph + t \de \ph) \right\|_{H^{s'}} \| \ph - \ps \|_{H^{s'}} \ud t \\
&\leq C_{s'}'' \| Df \|_{H^{s'}} \| \ph - \ps \|_{H^{s'}} \leq C_{s'} \| f \|_{H^{s'+1}} \| \ph - \ps \|_{H^{s'}}\,,
\end{align*}
with some constants $C_{s'}, C_{s'}', C_{s'}''$.
\end{proof}

\section{Convergence of Flows in $\mc D^s(\R^d)$}

In this section we want to clarify, what is meant by the flow of a vector field -- in particular for vector fields that are only $L^1$ -- and then prove some results about the convergence of flows given convergence of the underlying vector fields. The main result of the section is Thm. \ref{thm:flow_convergence}, which shows that for $s > d/2+1$ the flow map -- assuming it exists -- is continuous as a map
\[
\on{Fl} : L^1(I,H^s(\R^d,\R^d)) \to C(I, \mc D^{s'}(\R^d))\,,
\]
where $d/2+1 < s' < s$. The result will be strengthened by Thm. \ref{thm:d2plus1_flow}, which will show the existence of the flow as well as the convergence for $s'=s$.

\subsection{Pointwise and $\mc D^s$-valued flows}
\label{sec:flow_def}
Let $s>d/2+1$ and $I$ be a compact interval containing 0. Assume $u$ is a vector field, $u \in L^1(I,H^s(\R^d,\R^d))$. It is shown in~\cite[Sect. 8.2]{Younes2010} that there exists a map $\ph : I \x \R^d \to \R^d$, such that
\begin{itemize}
\item
$\ph(\cdot,x)$ is absolutely continuous for each $x$ and
\item
$\ph(t,\cdot)$ is continuous for each $t$,
\end{itemize}
and this map satisfies the equation
\begin{equation}
\label{eq:flow_pointwise}
\ph(t,x) = x + \int_0^t u(\ta, \ph(\ta, x)) \ud \ta\,.
\end{equation}
We will call such a map $\ph$ the {\it pointwise flow of $u$} or simply the {\it flow of $u$}. It then follows that for each $x \in \R^d$ the differential equation
\[
\p_t \ph(t,x) = u(t, \ph(t,x))
\]
is satisfied $t$ almost everywhere. It is also shown in \cite[Thm. 8.7]{Younes2010} that $\ph(t)$ is a $C^1$-diffeomorphism for all $t \in I$.

We will denote by $\on{Fl}(u) : I \to \on{Diff}^1(\R^d)$ the flow map of the vector field $u$. Given $t \in I$, the \emph{flow at time $t$} is $\on{Fl}_t(u) \in \on{Diff}^1(\R^d)$. If $\ph$ is the map solving~\eqref{eq:flow_pointwise}, then $\ph = \on{Fl}(u)$ and $\ph(t) = \on{Fl}_t(u)$. Note that \eqref{eq:flow_pointwise} implies $\on{Fl}_0(u) = \on{Id}$; we shall use this convention throughout the paper.

If we additionaly assume that $\ph \in C(I, \mc D^s(\R^d))$, i.e., $\ph$ is a continuous curve in $\mc D^s(\R^d)$, then Lem. \ref{lem:flow_Ds_implies_pointwise} shows that the function $t \mapsto u(t) \o \ph(t)$ is Bochner integrable in $H^s$ and the identity
\begin{equation}
\label{eq:flow_Ds}
\ph(t) = \on{Id} + \int_0^t u(\ta) \o \ph(\ta) \ud \ta
\end{equation}
holds in $\mc D^s(\R^d)$; furthermore, \eqref{eq:flow_Ds} implies that the curve $t \mapsto \ph(t)$ is absolutely continuous. We will call a curve $\ph \in C(I,\mc D^s(\R^d))$ a {\it flow of $u$ with values in $\mc D^s(\R^d)$} or a {\it $\mc D^s$-valued flow of $u$}. The pointwise flow of a vector field is unique and therefore, if the $\mc D^s$-valued flow exists, it is also unique. It will be shown in Thm. \ref{thm:d2plus1_flow} that every vector field $u \in L^1(I,H^s)$ has a $\mc D^s$-valued flow.

\begin{lemma}
\label{lem:flow_Ds_implies_pointwise} Let $s > d/2+1$, $u \in L^1(I,H^s(\R^d,\R^d))$ and $\ph \in C(I,\mc D^s(\R^d))$. Then it follows that:
\begin{enumerate}
\item
The function $t \mapsto u(t) \o \ph(t)$ is Bochner integrable.
\item
If $\ph$ satisfies \eqref{eq:flow_pointwise}, then the identity \eqref{eq:flow_Ds} holds as an identity in $\mc D^s(\R^d)$.
\end{enumerate}
\end{lemma}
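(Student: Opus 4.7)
The plan is to exploit the fact that $\ph \in C(I, \mc D^s(\R^d))$ has compact image, and hence, by part~(1) of Lemma \ref{CompositionsLemma}, there exist uniform constants $M, C > 0$ with
\[
\inf_{x} \det D\ph(t)(x) > M, \qquad \| \ph(t) - \on{Id}\|_{H^s} < C \qquad \text{for all } t \in I.
\]
(Cover $\ph(I)$ by finitely many neighborhoods of the form given by part~(1) of the lemma.) This reduces both claims to quantitative composition estimates.

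For the Bochner integrability of $t \mapsto u(t) \o \ph(t)$, what needs checking is strong measurability plus integrability of the norm. The latter is immediate from part~(2) of Lemma \ref{CompositionsLemma}: $\|u(t) \o \ph(t)\|_{H^s} \leq C_s \|u(t)\|_{H^s}$, which is in $L^1(I)$. For strong measurability, I would first show that for every fixed $v \in H^s$ the map $t \mapsto v \o \ph(t)$ is continuous from $I$ into $H^s$. Approximating $v$ by $v_n \in C^\infty_c(\R^d)$ in $H^s$, part~(3) of Lemma \ref{CompositionsLemma} (applied with $s' = s$, using that $v_n \in H^{s+1}$) shows that $\ph \mapsto v_n \o \ph$ is Lipschitz from $\mc D^s$ to $H^s$, hence $t \mapsto v_n \o \ph(t)$ is continuous; by part~(2) the convergence $v_n \o \ph(t) \to v \o \ph(t)$ is uniform in $t \in I$, so the limit is continuous as well. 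Consequently, for any simple function $u_N(t) = \sum_i v_i \one_{A_i}(t)$ with $v_i \in H^s$, the map $t \mapsto u_N(t) \o \ph(t)$ is strongly measurable. Choosing simple $u_N$ with $u_N \to u$ in $L^1(I, H^s)$ (and hence a.e.\ along a subsequence), the estimate in part~(2) of Lemma \ref{CompositionsLemma} gives $u_N(t) \o \ph(t) \to u(t) \o \ph(t)$ a.e.\ in $H^s$, so strong measurability is inherited.

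For (2), the right-hand side of \eqref{eq:flow_Ds} is now a bona fide Bochner integral in $H^s(\R^d,\R^d)$, and I would identify it with the pointwise integral in \eqref{eq:flow_pointwise} by applying point-evaluation functionals. Since $s > d/2 + 1$, the Sobolev embedding makes $\on{ev}_x : H^s \to \R^d$ a bounded linear map for each $x$; Bochner integrals commute with bounded linear functionals, so
\[
\Bigl( \int_0^t u(\ta) \o \ph(\ta) \ud \ta \Bigr)(x) = \int_0^t u(\ta, \ph(\ta, x)) \ud \ta = \ph(t,x) - x,
\]
where the last equality is precisely \eqref{eq:flow_pointwise}. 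Both $\ph(t) - \on{Id}$ and the Bochner integral are continuous functions of $x$ lying in $H^s$, and they agree pointwise, so they coincide as elements of $H^s$; this is exactly \eqref{eq:flow_Ds}.

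The main obstacle is the strong measurability argument, because $\ph \mapsto v \o \ph$ is the ``loss-of-one-derivative'' right-action that is notoriously only continuous into $H^{s'}$ with $s' < s$ when $v$ is merely $H^s$. The rescue is that the compactness of $\ph(I)$ in $\mc D^s$ makes the bound in part~(2) of Lemma \ref{CompositionsLemma} uniform in $t$, which upgrades the $H^{s+1}$-Lipschitz estimate of part~(3) (available for smooth $v$) to norm-continuity of $t \mapsto v \o \ph(t)$ in $H^s$ for every $v \in H^s$, via dense approximation.
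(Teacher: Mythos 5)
Your proposal is correct, and its overall architecture coincides with the paper's proof: uniform composition bounds on the compact set $\ph(I)$ give integrability of $t \mapsto \|u(t)\o\ph(t)\|_{H^s}$, and part (2) is obtained exactly as in the paper, by applying the point-separating family of evaluation maps $\on{ev}_x$ (bounded by the Sobolev embedding) and using that Bochner integrals commute with bounded linear maps. The one place where you genuinely diverge is the measurability step: the paper disposes of it in one line by asserting weak measurability and invoking Pettis' theorem (separability of $H^s$), whereas you construct strong measurability directly -- first proving that $t \mapsto v \o \ph(t)$ is $H^s$-continuous for each fixed $v$ (via the Lipschitz estimate of Lemma \ref{CompositionsLemma}(3) for smooth approximants, the uniform bound of part (2), and density), then passing to simple functions and an a.e.\ convergent approximation of $u$. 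Your route is longer but entirely self-contained and makes explicit what the paper leaves implicit; the paper's route is shorter at the price of leaving the weak measurability of $t \mapsto u(t)\o\ph(t)$ unjustified in detail. Only minor caveat: when you invoke part (3) to get Lipschitz continuity of $\ph \mapsto v_n \o \ph$, remember that the estimate is stated on convex sets with uniform bounds, so you should work on small balls around each $\ph(t_0)$ (which suffices for continuity of $t \mapsto v_n \o \ph(t)$), rather than on $\ph(I)$ itself.
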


\begin{proof}
First we show that $t \mapsto u(t) \o \ph(t)$ is Bochner integrable. The map $t \mapsto u(t) \o \ph(t)$ is weakly measurable and since $H^s$ is separable, also measurable \cite[Prop.~1.1.10]{Schwabik2005}. Since $I$ is compact, the set $\ph(I)$ satisfies the conditions of Lem. \ref{CompositionsLemma}~\eqref{lem:item2_technical_composition}, i.e., there exists a constant $C$ such that
\begin{equation*}
\| v \o \ph(t) \|_{H^s} \leq C \| v \|_{H^s}\,,
\end{equation*}
holds for all $v \in H^s$ and all $t \in I$. Thus
\[
\int_I \| u(t) \o \ph(t) \|_{H^s} \ud t \leq C \| u \|_{L^1} < \infty\,,
\]
via \cite[Thm.~1.4.3]{Schwabik2005}, which implies that $t \mapsto u(t) \o \ph(t)$ is Bochner integrable.

Now we prove the second statement. Denote by $\on{ev}_x : H^s(\R^d,\R^d) \to \R^d$ the evaluation map. Since $s > d/2$, this map is continuous and thus \eqref{eq:flow_pointwise} can be interpreted as
\[
\on{ev}_x\left( \ph(t) - \on{Id} \right) = \int_0^t \on{ev}_x \left( u(\ta) \o \ph(\ta) \right) \ud \ta\,.
\]
The Bochner integral commutes with bounded linear maps \cite[Thm.~6]{Diestel1977}, and the set $\{ \on{ev}_x \,:\, x \in \R^d \}$ is point-separating. Thus we obtain
\[
\ph(t) -\on{Id} = \int_0^t u(\ta) \o \ph(\ta) \ud \ta\quad\text{ in }H^s(\R^d,\R^d)\,,
\]
which concludes the proof.
\end{proof}

The meaning of Lem.~\ref{lem:flow_Ds_implies_pointwise} is that the notions of $\mc D^s$-valued flow and pointwise flow coincide, if we know a priori, that $\ph$ is a continuous curve in $\mc D^s(\R^d)$. The next lemma shows the basic property, that being a flow is preserved under uniform convergence of the flows and $L^1$-convergence of the vector fields.

\begin{lemma}
\label{lem:uniform_convergence_of_flows}
Let $s>d/2+1$ and let $u^n \in L^1(I,H^s(\R^d,\R^d))$ be a sequence of vector fields with $\mc D^s$-valued flows $\ph^n$. Assume that $u^n \to u$ and $\ph^n -\ph \to 0$ in $L^1(I,H^s)$ and $C(I, H^s)$ respectively. Then $\ph$ is the $\mc D^s$-valued flow of $u$.
\end{lemma}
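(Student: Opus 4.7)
The plan is to show first that $\ph$ is the pointwise flow of $u$ in the sense of \eqref{eq:flow_pointwise}, and then to verify $\ph \in C(I, \mc D^s(\R^d))$, which together yield the claim by definition of a $\mc D^s$-valued flow. I would deliberately avoid trying to pass to the limit directly in the $\mc D^s$-level identity \eqref{eq:flow_Ds}, since the integrand $u(\ta) \o \ph(\ta)$ only makes sense as an element of $H^s$ once $\ph(\ta) \in \mc D^s$ is known---precisely what we wish to establish---and $\mc D^s(\R^d) - \on{Id}$ is open but not closed in $H^s$.

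For the pointwise step, fix $(t, x)$ in the identity $\ph^n(t, x) = x + \int_0^t u^n(\ta, \ph^n(\ta, x)) \ud \ta$. The Sobolev embedding $H^s \hookrightarrow C^1_b$, which is available since $s > d/2+1$, together with uniform convergence $\ph^n \to \ph$ in $C(I, H^s)$ gives $\ph^n(t, x) \to \ph(t, x)$. For the right-hand side, split
\[
u^n(\ta) \o \ph^n(\ta) - u(\ta) \o \ph(\ta) = (u^n - u)(\ta) \o \ph^n(\ta) + \bigl[ u(\ta) \o \ph^n(\ta) - u(\ta) \o \ph(\ta) \bigr]\,.
\]
The first summand is pointwise dominated by $C \| u^n(\ta) - u(\ta) \|_{H^s}$, and its $\ta$-integral over $[0, t]$ vanishes by $L^1$-convergence of $u^n$. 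The mean value theorem combined with $\| Du(\ta) \|_\infty \leq C \| u(\ta) \|_{H^s}$ bounds the second summand pointwise by $C \| u(\ta) \|_{H^s} \| \ph^n - \ph \|_{C(I, H^s)}$; its $\ta$-integral is at most $C \| u \|_{L^1(I, H^s)} \| \ph^n - \ph \|_{C(I, H^s)} \to 0$. Passing to the limit yields $\ph(t, x) = x + \int_0^t u(\ta, \ph(\ta, x)) \ud \ta$. Continuity of $\ph(t, \cdot)$ is inherited from $H^s \hookrightarrow C^0$, and absolute continuity of $\ph(\cdot, x)$ is immediate from the integral form, so $\ph$ is the pointwise flow of $u$.

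Next I would upgrade this to a $\mc D^s$-valued flow. Since $u \in L^1(I, H^s) \hookrightarrow L^1(I, C^1_b)$, the cited result \cite[Thm.~8.7]{Younes2010} guarantees $\ph(t) \in \on{Diff}^1(\R^d)$ for each $t$. Combined with $\ph(t) - \on{Id} \in H^s$, which is the $H^s$-limit of $\ph^n(t) - \on{Id}$, the characterization of $\mc D^s(\R^d)$ as $\{ \ph \in \on{Id} + H^s \,:\, \ph \in \on{Diff}^1(\R^d) \}$ places $\ph(t)$ in $\mc D^s(\R^d)$. Uniform convergence $\ph^n - \on{Id} \to \ph - \on{Id}$ in $C(I, H^s)$ transfers to $\mc D^s(\R^d)$ via its subspace topology, giving $\ph \in C(I, \mc D^s(\R^d))$; by definition $\ph$ is the $\mc D^s$-valued flow of $u$, and Lem.~\ref{lem:flow_Ds_implies_pointwise}\,(2) even upgrades the pointwise identity to \eqref{eq:flow_Ds} in $\mc D^s(\R^d)$.

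The main difficulty sits entirely in the limit passage for the composition $u^n \o \ph^n \to u \o \ph$; once this is handled at the sup-norm level by the split above and dominated convergence, the rest of the argument is routine bookkeeping combining the already-proven properties of $\mc D^s(\R^d)$, Younes' result on the pointwise flow, and Lem.~\ref{lem:flow_Ds_implies_pointwise}.
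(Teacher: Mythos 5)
Your proposal is correct and follows essentially the same route as the paper: pass to the limit in the pointwise flow identity via the same splitting $(u^n-u)\o\ph^n + (u\o\ph^n - u\o\ph)$ with Sobolev-embedding bounds, invoke \cite[Thm.~8.7]{Younes2010} together with $\ph(t)-\on{Id}\in H^s$ to get $\ph(t)\in\mc D^s(\R^d)$, and conclude with Lem.~\ref{lem:flow_Ds_implies_pointwise}. The only difference is that you spell out explicitly the continuity of $t\mapsto\ph(t)$ in $\mc D^s$ and the absolute continuity of $\ph(\cdot,x)$, which the paper leaves implicit.
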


\begin{proof}
We need to show two things: that $\ph(t) \in \mc D^s(\R^d)$ and that $\ph$ is the $\mc D^s$-valued flow of $u$. First note that $\ph^n(t) - \ph(t) \in H^s$ implies $\ph(t) - \on{Id} \in H^s$.

As $\ph^n$ is the flow of $u^n$, it satisfies the identity
\begin{equation}
\label{eq:ptwse_flow_again} 
\ph^n(t,x) = x + \int_0^t u^n(\ta, \ph^n(\ta, x)) \ud \ta\,,
\end{equation}
for all $(t,x) \in I \x \R^d$. From the estimates
\begin{align*}
&\left| \int_0^t u^n(\ta, \ph^n(\ta,x)) - u(\ta, \ph(\ta,x)) \ud \ta \right| \\
&\quad\leq \int_0^t \left| u^n(\ta, \ph^n(\ta,x)) - u(\ta, \ph^n(\ta,x)) \right|
+  \left| u(\ta, \ph^n(\ta,x)) - u(\ta, \ph(\ta,x)) \right| \ud \ta \\
&\quad\leq \int_0^t \left\| u^n(\ta) - u(\ta) \right\|_{\infty} 
+ \left\| Du(\ta) \right\|_{\infty} \left\| \ph^n(\ta) - \ph(\ta) \right \|_{\infty} \ud \ta \\
&\quad\leq C \int_0^t \left \| u^n(\ta) - u(\ta) \right\|_{H^s} + \| u(\ta) \|_{H^s} \| \ph^n(\ta) - \ph(\ta) \|_{H^s} \ud \ta \\
&\quad\leq C \| u^n - u \|_{L^1(I,H^s)} + C\| u \|_{L^1(I,H^s)} \| \ph^n - \ph \|_{C(I,\mc D^s)}\,,
\end{align*}
with the constant $C$ arising from Sobolev embeddings, 
we see by passing to the limit in \eqref{eq:ptwse_flow_again} that $\ph$ is the pointwise flow of $u$. As remarked at the beginning of the section, it is shown in \cite[Thm 8.7]{Younes2010} that the pointwise flow $\ph(t)$ is a $C^1$-diffeomorphism and together with $\ph(t) - \on{Id} \in H^s$ this shows $\ph(t) \in \mc D^s(\R^d)$. Finally it follows from Lem. \ref{lem:flow_Ds_implies_pointwise} that $\ph$ is the $\mc D^s$-valued flow.
\end{proof}

We will use the following decomposition method repeatedly.

\begin{remark}
\label{rem:decomposition_principle}
A recurring theme is to show the existence of the flow
\[
\on{Fl}_t : L^1(I,\mf X^s) \to \mc D^s,\qquad u \mapsto \ph(t)\,,
\]
and its continuity -- either pointwise or uniformly in $t$ -- where $\mf X^s$ is the space of vector fields of a certain Sobolev regularity $s$ on $\R^d$ or on a manifold $M$. This is often done by proving the statement in question first for small vector fields, i.e. those with $\| u \|_{L^1} < \ep$ for some given $\ep$. The statement then follows for all vector fields via the following general principle.

Let $\ep>0$ be fixed. Given a vector field $u \in L^1(I,\mf X^s)$, there exists an $N$ and a decomposition of the interval $I$ into $N$ subintervals $[t_j,t_{j+1}]$, such that on each subinterval we have
\[
\int_{t_j}^{t_{j+1}} \| u(t) \|_{H^s} \ud t < \ep\,.
\]

Note that, while the points $t_j$ will depend on $u$, their total number $N$ can be bounded by a bound depending only on $\|u\|_{L^1}$; indeed we have $N \leq \| u\|_{L^1}/\ep + 1$. To see this, assume w.l.o.g. that $I=[0,1]$ and define the function $f(t) = \int_0^t \| u(\ta) \|_{H^s} \ud \ta$. The function is non-decreasing and maps $[0,1]$ to $[0,\|u\|_{L^1}]$. Subdivide the latter interval into $N$ subintervals $[s_j,s_{j+1}]$ of length less than $\ep$ and set $t_0 = 0$ and $t_j = \sup f\i(s_j)$ for $j=1,\dots, N$.

Let $u_j = u|_{[t_j,t_{j+1}]}$ be the restriction of $u$ to the subinterval $[t_j,t_{j+1}]$. We have $\| u_j\|_{L^1} < \ep$ and we can apply the proven statement to obtain the existence of a flow, which we denote $\ph_j$; here we let $\ph_j(t_j) = \on{Id}$. Then we define for $t \in [t_j,t_{j+1}]$,
\[
\ph(t) = \ph_{j}(t) \o \ph_{j-1}(t_j) \o \dots \o \ph_1(t_2) \o \ph_0(t_1)\,.
\]
It can easily be checked, that $\ph$ is the flow of $u$ -- on $\R^d$ this can be done directly and on a manifold $M$ using coordinate charts. As the flow is put together using only finitely many compositions and $\mc D^s$ is a topological group any statement about continuity of the flow map can be transferred from $u_j$ to $u$.

Another reformulation of the decomposition principle is that any diffeomorphism $\ph$, that is the flow of a vector field $u$ with $\| u \|_{L^1} < r$, can be decomposed into
\[
\ph = \ph_1 \o \ph_2 \o \dots \o \ph_N\,,
\]
where each $\ph_j$ is the flow of a vector field $u_j$ with $\| u_j \|_{L^1} < \ep$ and $N$ depends only on $r$.
\end{remark}

A first example, that uses this method is the proof of the following lemma, showing that Lem. \ref{CompositionsLemma} can be applied on arbitrary geodesic balls.

\begin{lemma}
\label{lem:composition_bound_Rd}
Let $s>d/2+1$ and $0\leq s' \leq s$. Given $r>0$ and $n \in \mb N$, there exists a constant $C$, such that the inequality
\[
\| v \o \ph \|_{H^{s'}} \leq C \| v \|_{H^{s'}}
\]
holds for all $v \in H^{s'}(\R^d,\R^n)$ and all $\ph \in \mc D^s(\R^d)$, that can be written as $\ph = \ps(1)$, where $\ps$ is the $\mc D^s(\R^d)$-valued flow of a vector field $u$ with $\| u \|_{L^1(I,H^s)} < r$.
\end{lemma}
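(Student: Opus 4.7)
The plan is to apply the decomposition principle of Remark~\ref{rem:decomposition_principle}. I will first establish the inequality for diffeomorphisms arising from vector fields with sufficiently small $L^1(I,H^s)$-norm, and then use the decomposition and iterated composition to handle an arbitrary $r$.

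For the base case, I would use Lemma~\ref{CompositionsLemma}(1) at $\ps = \on{Id}$ to produce $\de, M > 0$ such that every $\ph \in \on{Id} + B_\de(0)$ lies in $\mc D^s(\R^d)$ and satisfies $\inf_x \det D\ph(x) > M$. Applying Lemma~\ref{CompositionsLemma}(2) with these values, at order $s$, yields a constant $C_0$ such that $\|w \o \ph\|_{H^s} \leq C_0 \|w\|_{H^s}$ for every $w \in H^s$ and every such $\ph$. Setting $\ep_0 := \de/(2C_0)$, I would next argue that whenever $\|u\|_{L^1(I,H^s)} < \ep_0$ the flow $\ps$ remains inside $\on{Id} + B_{\de/2}(0)$ for all $t \in I$. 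The key step is a continuation argument: since $\ps$ is continuous into $\mc D^s(\R^d)$ with $\ps(0) = \on{Id}$, on the maximal initial interval where $\ps(\tau) \in \on{Id} + B_\de(0)$ one has, via \eqref{eq:flow_Ds},
\[
\|\ps(t) - \on{Id}\|_{H^s} \leq \int_0^t \|u(\tau) \o \ps(\tau)\|_{H^s} \ud \tau \leq C_0 \|u\|_{L^1(I,H^s)} < \de/2\,,
\]
so $\ps$ cannot reach the boundary of $\on{Id}+B_\de(0)$ and the maximal interval must be all of $I$. In particular $\ph = \ps(1)$ meets the hypotheses of Lemma~\ref{CompositionsLemma}(2) at order $s'$, producing a constant $C_1$ with $\|v \o \ph\|_{H^{s'}} \leq C_1 \|v\|_{H^{s'}}$ for every $v \in H^{s'}(\R^d,\R^n)$.

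For general $r$, I would invoke Remark~\ref{rem:decomposition_principle} to factorise $\ph = \ph_{N-1} \o \cdots \o \ph_0$, where $N \leq r/\ep_0 + 1$ depends only on $r$ and each $\ph_j$ is a time-one flow of a vector field with $L^1$-norm below $\ep_0$ (after an obvious time reparametrisation of each subinterval, the base case applies). Iterating the base-case inequality gives
\[
\|v \o \ph\|_{H^{s'}} \leq C_1 \|v \o \ph_{N-1} \o \cdots \o \ph_1\|_{H^{s'}} \leq \cdots \leq C_1^N \|v\|_{H^{s'}}\,,
\]
which yields the lemma with $C := C_1^N$.

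The main obstacle is the apparent circularity of the base-case estimate: bounding $\|\ps(t) - \on{Id}\|_{H^s}$ requires a composition bound on $\ps(\tau)$, which in turn requires $\ps(\tau)$ to already sit in a controlled neighbourhood of $\on{Id}$. The Gronwall-style continuation argument above, combined with the choice $\ep_0 = \de/(2C_0)$ that leaves a safety margin between $B_{\de/2}(0)$ and $B_\de(0)$, is what allows the estimate to close.
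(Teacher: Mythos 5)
Your proposal is correct and follows essentially the same route as the paper's proof: reduce to small vector fields via Remark~\ref{rem:decomposition_principle}, show by a continuation argument using \eqref{eq:flow_Ds} and Lemma~\ref{CompositionsLemma} that the flow of a small vector field stays in a fixed $H^s$-ball around $\on{Id}$, and then iterate the composition bound $N$ times with $N$ depending only on $r$. The only cosmetic difference is your explicit safety margin $\de/2$ versus the paper's ``smallest time $T$'' formulation of the same bootstrap.
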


\begin{proof}
For the purposes of this proof we set $I=[0,1]$. Choose an $\ep > 0$ such that $\on{Id}+ B_\ep(0) \subseteq \mc D^s(M)$ with $B_\ep(0)$ being the $\ep$-ball in $H^s(\R^d,\R^d)$. Using Rem.~\ref{rem:decomposition_principle} it is enough to prove the lemma for vector fields $u$ with $C \| u \|_{L^1} < \ep$. Let $\ps$ be the $\mc D^s$-valued flow of such a vector field; the existence of $\ps$ is guaranteed by the assumptions of the lemma.
We claim that $\ps$ satisfies $\ps(t) \in \on{Id} + B_\ep(0)$. Assume the contrary and let $T$ be the smallest time, such that either $\| \ps(T) - \on{Id}\|_{H^s} = \ep$ or $T=1$. Then for $t < T$ we have the bound
\begin{equation*}
\| \ps(t) - \on{Id}\|_{H^s} \leq
\int_{0}^t \| u(\ta) \o \ps(\ta) \|_{H^s} \ud \ta
\leq C \int_I \| u(\ta) \|_{H^s} \ud \ta < \ep\,.
\end{equation*}
The curve $t \mapsto \ps(t)$ is continuous in $\mc D^s(\R^d)$ and since the last inequality doesn't depend on $t$, it remains strict even in the limit $t \to T$, thus showing $\| \ps(T) - \on{Id}\|_{H^s} < \ep$. This implies that $T = 1$ and $\ph = \ps(1) \in \on{Id} + B_\ep(0)$.

This shows that given $\ph$, we can decompose $\ph$ into
\[
\ph = \ph^1 \o \dots \o \ph^N
\]
and $\ph^k \in \on{Id} + B_\ep(0)$ for all $k = 1,\dots,N$. For each $\ph^k$ we can apply Lem.~ \ref{CompositionsLemma}~\eqref{lem:item2_technical_composition} to obtain
\[
\| u \o \ph \|_{H^{s'}} \leq C^N_1 \| u \|_{H^{s'}}\,,
\]
for some constant $C_1$. As $N$ depends on $\ph$ only via $r$, this completes the proof.
\end{proof}

\begin{remark}
With a bit more work one can show that for each $r>0$, there exist constants $M$ and $C$, such that the bounds
\[
\inf_{x \in \R^d} \det D\ph(t,x) > M\;\;\text{and}\;\;
\| \ph(t) - \on{Id} \|_{H^s} < C
\]
hold for diffeomorphisms, that are flows of vector fields with $L^1$-norm less that $r$; then it is possible to apply Lem. \ref{CompositionsLemma}~\eqref{lem:item2_technical_composition} directly.
\end{remark}

The next theorem shows that $L^1$-convergence of $H^s$-vector fields implies uniform convergence of the flows, not in $\mc D^s(\R^d)$, but in $\mc D^{s'}(\R^d)$ with $s'< s$. The proof is a generalization of the proof in \cite[Prop. B.1]{Inci2012}.

\begin{theorem}
\label{thm:flow_convergence}
Let $s>d/2+1$ and let $u^n \in L^1(I,H^s(\R^d,\R^d))$ be a sequence of vector fields with $\mc D^s$-valued flows $\ph^n$. Assume that $u^n \to u$ in $L^1(I,H^s)$. 

Then there exists a map $\ph: I\x \R^d\to \R^d$, satisfying $\ph \in C(I, \mc D^{s'}(\R^d))$ for all $s'$ with $d/2+1 < s' < s$,
\[
\ph^n \to \ph \text{ in } C(I, \mc D^{s'}(\R^d))\,,
\]
and $\ph$ is the $\mc D^{s'}$-valued flow of $u$.
\end{theorem}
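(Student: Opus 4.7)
My plan is to show that $\{\ph^n\}$ is Cauchy in $C(I, H^{s'})$ by a Grönwall-type estimate and then identify the limit via Lem.~\ref{lem:uniform_convergence_of_flows} applied at regularity $s'$.

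First I would reduce to the case of uniformly small $L^1$-norms through the decomposition principle of Rem.~\ref{rem:decomposition_principle}: the number $N$ of subintervals depends only on a bound for $\|u^n\|_{L^1(I, H^s)}$, which is uniform in $n$ since $u^n \to u$, and concatenation of finitely many continuous flow maps remains continuous. So it suffices to prove the statement under $\|u^n\|_{L^1(I, H^s)} < \ep$ for a small fixed $\ep$. Under this assumption, the argument inside the proof of Lem.~\ref{lem:composition_bound_Rd} together with the remark following it furnish uniform (in $n$, $t$) bounds $\|\ph^n(t) - \on{Id}\|_{H^s} < C$ and $\inf_{x} \det D\ph^n(t, x) > M$, placing every $\ph^n(t)$ inside a common convex set $U \subset \mc D^s$ on which Lem.~\ref{CompositionsLemma}(2)-(3) apply with uniform constants.

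The Cauchy estimate then follows from \eqref{eq:flow_Ds}. Splitting
\[
\ph^n(t) - \ph^m(t) = \int_0^t (u^n - u^m)(\ta) \o \ph^n(\ta) \ud\ta + \int_0^t \bigl[ u^m(\ta) \o \ph^n(\ta) - u^m(\ta) \o \ph^m(\ta) \bigr] \ud\ta,
\]
Lem.~\ref{CompositionsLemma}(2) bounds the first integrand in $H^{s'}$ by $C\|u^n(\ta) - u^m(\ta)\|_{H^{s'}}$, while Lem.~\ref{CompositionsLemma}(3) on $U$ bounds the second by $C\|u^m(\ta)\|_{H^{s'+1}} \|\ph^n(\ta) - \ph^m(\ta)\|_{H^{s'}}$. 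Taking $H^{s'}$-norms and applying Grönwall with the $L^1$-integrable kernel $\ta \mapsto \|u^m(\ta)\|_{H^{s'+1}}$ yields
\[
\|\ph^n - \ph^m\|_{C(I, H^{s'})} \leq C \|u^n - u^m\|_{L^1(I, H^{s'})} \exp\bigl( C \|u^m\|_{L^1(I, H^{s'+1})} \bigr) \to 0,
\]
so $\{\ph^n\}$ is Cauchy in $C(I, H^{s'})$. Since $\mc D^{s'}(\R^d) - \on{Id}$ is open in $H^{s'}$, the limit $\ph$ lies in $C(I, \mc D^{s'})$; as each $\mc D^s$-valued flow $\ph^n$ is automatically $\mc D^{s'}$-valued, Lem.~\ref{lem:uniform_convergence_of_flows} at regularity $s'$ then identifies $\ph$ as the $\mc D^{s'}$-valued flow of $u$.

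The main obstacle is that the Lipschitz estimate in Lem.~\ref{CompositionsLemma}(3) requires $s' + 1 \leq s$, whereas the theorem demands convergence for every $s' < s$. I would cover the borderline range $s - 1 < s' < s$ by interpolation: the above argument executed at some fixed $s_0 \in (d/2 + 1, s-1]$ delivers convergence in $C(I, H^{s_0})$, and the smallness reduction ensures uniform boundedness of $\ph^n(t) - \on{Id}$ in $H^s$; the standard interpolation inequality $\|f\|_{H^{s'}} \leq C \|f\|_{H^{s_0}}^{\theta} \|f\|_{H^s}^{1-\theta}$ then transfers convergence to every intermediate $s' \in (s_0, s)$, and embedding $H^{s_0} \hookrightarrow H^{s'}$ handles $s' \leq s_0$.
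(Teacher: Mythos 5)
Your proposal follows essentially the same route as the paper's proof: reduce to vector fields of small $L^1$-norm by the decomposition principle so that all flows stay in a fixed small ball around $\on{Id}$ in $H^s$, run a Gr\"onwall argument at a lower Sobolev index using the composition estimates of Lem.~\ref{CompositionsLemma}, interpolate against the uniform $H^s$-bound to reach every $s' < s$, and identify the limit via Lem.~\ref{lem:uniform_convergence_of_flows}. Two points need repair, though. First, your prescription ``execute the Gr\"onwall argument at some $s_0 \in (d/2+1,\, s-1]$'' is vacuous whenever $d/2+1 < s \leq d/2+2$, since then $s-1 \leq d/2+1$ and the interval is empty; this is exactly the delicate range the theorem is meant to cover. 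The constraint $s_0 > d/2+1$ is not needed anywhere in your estimate: the Lipschitz bound of Lem.~\ref{CompositionsLemma}(3) only requires $s_0 > d/2$ (together with $s_0 + 1 \leq s$), and the hypotheses on the diffeomorphisms are stated at level $s$, where your smallness reduction provides them. So take $s_0 = s-1$, which satisfies $s-1 > d/2$ precisely because $s > d/2+1$; this is the choice the paper makes, with Gr\"onwall kernel $\|u^m(\ta)\|_{H^s}$.

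Second, the sentence ``since $\mc D^{s'}(\R^d) - \on{Id}$ is open in $H^{s'}$, the limit $\ph$ lies in $C(I,\mc D^{s'})$'' is not a valid argument: a limit of points of an open set need not lie in that open set, so openness alone does not make $\ph(t)$ a diffeomorphism. Fortunately this remark is redundant in your write-up, because Lem.~\ref{lem:uniform_convergence_of_flows}, which you invoke immediately afterwards, establishes exactly this: its proof shows the limit is the pointwise flow, hence a $C^1$-diffeomorphism by \cite[Thm.~8.7]{Younes2010}, and together with $\ph(t)-\on{Id} \in H^{s'}$ this yields $\ph(t) \in \mc D^{s'}(\R^d)$. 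Simply delete the openness claim and let Lem.~\ref{lem:uniform_convergence_of_flows} (applied at level $s'$, for each $s' \in (d/2+1, s)$, with $u^n \to u$ in $L^1(I,H^{s'})$ by embedding) carry both the membership and the flow property, as in the paper. With these two corrections your argument coincides with the paper's proof.
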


\begin{proof}

Let $B_\ep^s(0)$ be the $\ep$-ball in $H^s(\R^d,\R^d)$. As $s>d/2+1$ we obtain via Lem. \ref{CompositionsLemma} an $\ep>0$ and a constant $C=C(\ep)$, such that $\on{Id} + B_{\ep}^s(0) \subseteq \mc D^{s}(\R^d)$ and the estimates
\begin{align}
\label{eq:tmp_ineq2}
\| u \o \ph - u \o \ps \|_{H^{s-1}} &\leq C \| u \|_{H^s} \| \ph - \ps \|_{H^{s-1}} \\
\label{eq:tmp_ineq3}
\| u \o \ph \|_{H^{s-1}} &\leq C \| u \|_{H^{s-1}} \\
\label{eq:tmp_ineq4}
\| u \o \ph \|_{H^{s}} &\leq C \| u \|_{H^{s}}
\end{align}
are valid for all $u \in H^s$ and all $\ph, \ps \in \on{Id}+B^s_{\ep}(0)$.

{\bf Step 1.} Reduce problem to $\on{Id} + B_\ep^s(0)$. \\
Using the decomposition method of Rem.~\ref{rem:decomposition_principle} it is enough to prove the theorem for vector fields $u$ with $C \| u \|_{L^1} < \ep$. Since $u^n \to u$ in $L^1$, we can also assume that $C \| u^n \|_{L^1} < \ep$ for all $n \in \mb N$.

As part of the proof of Lem.~\ref{lem:composition_bound_Rd} it was shown that if $u^n$ satisfies $C \| u^n \|_{L^1} < \ep$, then its flow $\ph^n$ remains in $\on{Id} + B^s_\ep(0)$. Thus we can restrict our attention to diffeomorphisms lying in an $\ep$-ball around $\on{Id}$.

{\bf Step 2.} Convergence in $H^{s-1}(\R^d,\R^d)$. \\
We show that $(\ph^n(t) - \on{Id})_{n \in \mathbb N}$ are Cauchy sequences in $H^{s-1}$, uniformly in $t$. Using \eqref{eq:tmp_ineq2} and \eqref{eq:tmp_ineq3} we can estimate
\begin{align*}
\| \ph^n(t) &- \ph^m(t) \|_{H^{s-1}} \leq{}\\
&\leq \int_0^t \| u^n \o \ph^n - u^m \o \ph^n \|_{H^{s-1}} +
\| u^m \o \ph^n - u^m \o \ph^m \|_{H^{s-1}} \ud \ta \\
&\leq C \int_0^t \| u^n - u^m \|_{H^{s-1}} + \| u^m \|_{H^{s}}
\| \ph^n - \ph^m \|_{H^{s-1}} \ud \ta\,.
\end{align*}
Via Gronwall's inequality we get for some $C_1>0$, independent of $t$,
\begin{equation}
\label{eq:phn_cauchy_s-1}
\| \ph^n(t) - \ph^m(t) \|_{H^{s-1}} \leq C_1 \int_0^t \| u^n(\ta) - u^m(\ta) \|_{H^{s-1}} \ud \ta\,.
\end{equation}
Thus there exists a continuous limit curve $\ph(t) - \on{Id} \in H^{s-1}$. 

{\bf Step 3.} Convergence in $H^{s'}(\R^d,\R^d)$ with $s-1 < s' < s$. \\
We apply the following interpolation inequality, see, e.g., \cite[Lem. B.4]{Inci2012}:
\[
\| f \|_{H^{\la \si +(1-\la)s}} \leq C_2\, \| f \|_{H^{\si}}^\la \| f \|_{H^{s}}^{1-\la}\,,
\]
The inequality is valid for $0 \leq \si \leq s$, $f \in H^s(\R^d, \R^d)$ and a constant $C_2$, independent of $f$. Choose in the above inequality $\si=s-1$ and $0 < \la \leq 1$. Then
\begin{align*}
\| \ph^n(t) &- \ph^m(t) \|_{H^{s-\la}} \leq {}\\
&{}\leq C_2 \| \ph^n(t) - \ph^m(t) \|_{H^{s-1}}^\la \| \ph^n(t) - \ph^m(t) \|_{H^s}^{1-\la} \\
&{}\leq C_2 \| \ph^n(t) - \ph^m(t) \|_{H^{s-1}}^\la \left(
\| \ph^n(t) - \on{Id} \|_{H^s} + 
\| \ph^m(t) - \on{Id} \|_{H^s} \right)^{1-\la} \\
&{}\leq C_2 \| \ph^n(t) - \ph^m(t) \|_{H^{s-1}}^\la (2\ep)^{1-\la}\,.
\end{align*}
Since $\ph^n(t) - \on{Id} \to \ph(t) - \on{Id}$ in $H^{s-1}$, uniformly in $t$, it follows that $(\ph^n(t) - \on{Id})_{n \in \mathbb N}$ is a Cauchy sequence in $H^{s'}$ for $s-1 \leq s' < s$, uniformly in $t$. As $\ph^n(t) - \on{Id}$ converges to $\ph(t) - \on{Id}$ in $H^{s-1}$, it must also converge to the same limit in $H^{s'}$. By applying Lem. \ref{lem:uniform_convergence_of_flows} we see that $\ph\in \mc D^{s'}(\R^d)$ and that it is the $\mc D^{s'}$-valued flow of $u$.
\end{proof}

\section{Existence of the flow map}

The main result of this section is the existence and continuity of the flow map 
\[
\on{Fl} : L^1(I, \mf X^s(\R^d)) \to C(I, \mc D^s(\R^d))
\]
for $s>d/2+1$, with $I$ being a compact interval containing 0. This result will be the crucial ingredient in proving that the group $\mc G_{H^s(\R^d,\R^d)}$, introduced in Sect.~\ref{sec:lddmm_framework}, coincides with the connected component of the identity of $\mc D^s(\R^d)$. We would like to make some comments about this result. 

Since the flow $\ph$ of a vector field $u$ is defined as the solution of the ODE
\begin{equation}
\label{eq:flow_again}
\begin{aligned}
\p_t \ph(t) &= u(t) \o \ph(t) \\
\ph(0) &= \on{Id}
\end{aligned}\,,
\end{equation}
the first attempt at showing the existence of $\ph$ would be to consider \eqref{eq:flow_again} as an ODE in $\mc D^s(\R^d)$ -- the latter being, up to translation by $\on{Id}$, an open subset of the Hilbert space $H^s(\R^d,\R^d)$ -- with the right hand side given by the vector field
\begin{equation}
\label{eq:vector_field_U}
U : I \x \mc D^s \to H^s\,,\quad U(t,\ph) = u(t) \o \ph\,.
\end{equation}
This runs into two sets of difficulties.

Firstly, the Picard--Lindelöf theory of ODEs requires the right hand side $f(t,x)$ of an ODE to be (locally) Lipschitz continuous in $x$ and continuous in $t$. Under these conditions the theorem of Picard-Lindelöf guarantees the local existence of integral curves. In our case the right hand side is not continuous in $t$, but only $L^1$. The usual way to prove existence of solutions in the framework of Picard--Linderlöf involves the Banach fixed point theorem, and the proof can be generalized without much difficulty to ODEs, that are not continuous in $t$. It is enough to require that $f(t,x)$ is Lipschitz in $x$ and only measurable in $t$ and that the Lipschitz constants are locally integrable, i.e., there exists a function $\ell(t)$ with $\int \ell(t) \ud t < \infty$, such that
\[
\| f(t,x_1) - f(t,x_2)\| \leq \ell(t)\, \| x_1 - x_2 \|
\]
is valid for all $x_1,x_2$ and for $t$ almost everywhere. This class of differential equations is called ordinary differential equations of \emph{Carathéodory type}. We have summarized the key facts about ODEs of Carathéodory type in App. \ref{sec:caratheodory}.

Secondly, the vector field $U$ from \eqref{eq:vector_field_U} is also not Lipschitz in $\ph$. The composition map $H^s \x \mc D^s \to H^s$ is continuous, but not Lipschitz continuous. In finite dimensions the theorem of Peano shows that vector fields $f(t,x)$ that are continuous in $t$ and $x$, have flows, but the flows might fail to be unique. In infinite dimensions this is not the case anymore; an example of a continuous vector field without a flow can be found in \cite[Example 2.1]{Deimling1977}.

For a continuous vector field $u$, i.e., $u \in C(I, H^s)$, the existence of a $\mc D^s$-valued flow has been shown in \cite{Fischer1972} and using different methods also in \cite{Bourguignon1974} and \cite{Inci2012}. We will briefly review the proofs to choose the one, that most easily generalizes to vector fields $u \in L^1(I,H^s)$.

If we only require $s>d/2+2$, then the proof is much shorter than the more general case $s>d/2+1$ and can be found already in \cite{Ebin1970}. First one considers the equation \eqref{eq:flow_again} as an ODE on $\mc D^{s-1}(\R^d)$. Due to the properties of the composition map, the vector field $U : I \x \mc D^{s-1} \to H^{s-1}$ is a $C^1$-vector field and hence has a $\mc D^{s-1}$-valued flow $\ph$. This is worked out in detail in Lem. \ref{lem:d2plus2_flow}. To show that $\ph \in \mc D^s$, one considers the differential equation for $D\ph(t)$,
\[
\p_t \left(D\ph(t) - \on{Id}_{d\x d}\right) = 
\left(Du(t) \o \ph(t)\right).\left(D\ph(t) - \on{Id}_{d\x d}\right)
 + Du(t) \o \ph(t)\,.
\]
This is a linear differential equation on $H^{s-1}$, thus showing $D\ph - \on{Id}_{d\x d} \in H^{s-1}$ and $\ph \in \mc D^s$. The details of this argument can be found in Lem. \ref{lem:flow_regularity}.

Improving the hypothesis on $s$ to $s>d/2+1$ requires a bit of work. For vector fields $u \in C(I, H^s)$ that are continuous in time and not just $L^1$ this result has been proven by three different methods.

\begin{asparaenum}
\item
The approach used in \cite{Fischer1972} was to derive an equation for $\ph\i(t)$ instead of $\ph(t)$. Write $\ph\i(t) = \on{Id} + f(t)$ with $f(t) \in H^s$. Then $\p_t \ph\i(t) = -D\ph\i(t).u(t)$ and so $f(t)$ satisfies the equation
\begin{equation}
\label{eq:hyperbolic_flow}
\p_t f(t) = -Df(t).u(t) - u(t)\,.
\end{equation}
This is a linear, symmetric, hyperbolic system and the theory developed in \cite{Fischer1972} can be applied to show that, given $u \in C(I,H^s)$, the system \eqref{eq:hyperbolic_flow} has a solution $f(t) \in H^s$ and hence $\ph\i(t) \in \mc D^s(\R^d)$. To extend this method to vector fields that are only $L^1$ in $t$, one would need a theory of linear, hyperbolic systems with non-smooth (in $t$) coefficients.

\item
The method of \cite{Bourguignon1974} considers not only the groups $\mc D^s(\R^d)$ which are based on the spaces $H^s$, but the more general family $W^{s,p}$ and the corresponding diffeomorphism groups, which we shall denote by $\mc D^{s,p}(\R^d)$. One proves that vector fields $u \in C(I, W^{s,p})$ with $s>d/p+1$ have $\mc D^{s,p}$-valued flows. The proof considers only $s \in \mathbb N$ and proceeds by induction on $s$. The induction step uses the fact that given $s$ satisfying $s > d/p+1$ we can find $p'> p$ such that $s-1 > d/p'+1$ and hence we can apply the induction hypothesis to the pair $(s-1,p')$. Extending this method to $s \in \R$ and vector fields $u \in L^1(I, W^{s,p})$ would require us to study properties of the composition map on the spaces $\mc D^{s,p}(\R^d)$ -- this has not yet been done for $s \in \R \setminus \mathbb N$.

\item
The idea of \cite[App. B]{Inci2012} is to approximate a vector field $u \in C(I,H^s)$ by a sequence of vector fields in $H^{s+1}$ and then to show that the corresponding flows converge as well. This method is ideally suited to be generalised from continuous vector fields to $L^1$ vector fields and it will be the path we choose to follow here.
\end{asparaenum}

To prepare the proof of the main theorem, Thm. \ref{thm:d2plus1_flow}, we will need some lemmas. The first lemma -- which can be traced back to \cite[Lem. 3.3]{Ebin1970} -- shows that the flow of a vector field is as regular as the vector field itself.

\begin{lemma}
\label{lem:flow_regularity}
Let $d/2+1 < s' \leq s$ and $u \in L^1(I, H^s(\R^d,\R^d))$. Assume $u$ has a flow in $\mc D^{s'}(\R^d)$. Then in fact $\ph \in C(I, \mc D^{s}(\R^d))$.
\end{lemma}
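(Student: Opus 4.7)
The plan is to bootstrap the spatial regularity of $\ph$ from $\mc D^{s'}$ up to $\mc D^s$ by a finite induction, gaining one derivative at a time through the linear evolution equation satisfied by $D\ph$.

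The inductive step: assume $\ph \in C(I, \mc D^\sigma)$ for some $\sigma$ with $s' \leq \sigma < s$ and set $\tau = \min(\sigma+1, s)$. Formally differentiating the pointwise flow identity $\ph(t,x) = x + \int_0^t u(\ta, \ph(\ta,x))\ud\ta$ in the spatial variable gives the pointwise ODE
\begin{equation*}
\p_t A(t,x) = B(t,x)\, A(t,x) + B(t,x)\,, \qquad A(0,x) = 0\,,
\end{equation*}
for $A(t,x) = D\ph(t,x) - \on{Id}_{d\x d}$ and $B(t,x) = Du(t, \ph(t,x))$, valid for almost every $t$. The evaluation is justified because $u \in L^1(I, H^s)$ and $s > d/2+1$ imply $Du(t) \in C^0(\R^d, \R^{d\x d})$ for a.e.\ $t$.

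The key move is to reinterpret this equation in the Bochner sense with values in $H^{\tau-1}$. Since $Du \in L^1(I, H^{s-1}) \subseteq L^1(I, H^{\tau-1})$ and $\tau-1 \leq \sigma$, Lemma~\ref{lem:composition_bound_Rd}---applied at group regularity $\sigma$ (noting $u \in L^1(I, H^\sigma)$ and $\ph$ is a $\mc D^\sigma$-valued flow) and composition index $\tau-1$---yields $B \in L^1(I, H^{\tau-1})$. Because $\tau-1 > d/2$, the space $H^{\tau-1}$ is a Banach algebra, so the linear Carath\'eodory ODE admits a unique solution $\wt A \in C(I, H^{\tau-1})$ via the theory recalled in App.~\ref{sec:caratheodory}. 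By the Sobolev embedding $H^{\tau-1} \hookrightarrow C^0(\R^d)$, evaluating $\wt A$ at $x$ produces, for each fixed $x$, an absolutely continuous curve satisfying the same pointwise linear ODE as $A(\cdot, x)$; uniqueness of linear ODEs in $\R^{d\x d}$ with $L^1$ coefficients forces $\wt A(t,x) = A(t,x)$ everywhere. Hence $D\ph(t) - \on{Id}_{d\x d} \in C(I, H^{\tau-1})$, and combining this with $\ph - \on{Id} \in C(I, H^{s'}) \hookrightarrow C(I, L^2)$ and the norm equivalence $\|f\|_{H^\tau}^2 \sim \|f\|_{L^2}^2 + \|Df\|_{H^{\tau-1}}^2$ (valid since $\tau \geq 1$) yields $\ph \in C(I, \mc D^\tau)$.

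Starting from $\sigma_0 = s'$ and iterating with $\sigma_{n+1} = \min(\sigma_n+1, s)$, the process terminates after at most $\lceil s - s'\rceil$ steps with $\ph \in C(I, \mc D^s)$. The main technical obstacle is the identification $\wt A = A$, since a priori the Bochner solution and the classical pointwise derivative live in different function spaces of spatial regularity; reconciling them relies on the Sobolev embedding $H^{\tau-1} \hookrightarrow C^0$, which in turn is ensured throughout the induction by the standing hypothesis $s' > d/2+1$.
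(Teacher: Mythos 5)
Your proposal is correct and follows essentially the same route as the paper: derive the linear Carath\'eodory equation for $D\ph - \on{Id}_{d\x d}$ with coefficient $Du\o\ph$ Bochner integrable in the Banach algebra $H^{\tau-1}$, solve it by the linear existence theorem of App.~\ref{sec:caratheodory}, identify the solution with $D\ph - \on{Id}_{d\x d}$, and bootstrap one derivative at a time exactly as in the paper's induction $(s',s'+1),\dots,(s'+k,s)$. The only (harmless) variation is that you identify the two solutions via pointwise evaluation and finite-dimensional ODE uniqueness, where the paper invokes uniqueness of the same linear equation in the larger space $H^{s'-1}$.
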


\begin{proof}
We will first prove the case $s' < s \leq s'+1$. This is equivalent to $s - 1 \leq s' < s$. Our aim is to show that $D\ph(t) - \on{Id}_{d\x d}$ is a continuous curve in $H^{s-1}(\R^d,\R^{d\x d})$, implying that $\ph(t) - \on{Id}$ is a continuous curve in $H^s(\R^d, \R^d)$. 
Note that the derivative $D\ph(t)$ satisfies the following ODE in $H^{s'-1}$, $t$-a.e.,
\begin{equation}
\label{eq:Dphi_ODE}
\p_t \left(D\ph(t) - \on{Id}_{d\x d}\right) = 
(Du(t) \o \ph(t)).\left(D\ph(t) - \on{Id}_{d\x d}\right) + Du(t) \o \ph(t)\,.
\end{equation}
Consider the following linear, inhomogeneous, matrix-valued differential equation
\begin{equation}
\label{eq:Dphi_general_ODE}
\p_t A(t) = (Du(t) \o \ph(t)). A(t) + Du(t) \o \ph(t)\,,
\end{equation}
on $H^{s-1}(\R^d,\R^{d\x d})$. Since $H^{s-1}$ is a Banach algebra, we can interpret $Du(t) \o \ph(t)$ as an element of $L(H^{s-1})$, i.e., a linear map from $H^{s-1}$ to itself, and there exists a constant $C>0$, such that
\[
\| Du(t) \o \ph(t) \|_{L(H^{s-1})} \leq C \| Du(t) \o \ph(t) \|_{H^{s-1}}\,.
\]
Lemma \ref{lem:flow_Ds_implies_pointwise} shows that $Du(t)\o\ph(t)$ is Bochner integrable in $H^{s'}$ and thus in $H^{s-1}$. This allows us to apply the existence theorem for linear Carathéodory equations, Thm. \ref{thm:caratheodory_linear}, giving us a solution $A \in C(I, H^{s-1})$ of \eqref{eq:Dphi_general_ODE}. Since $D\ph - \on{Id}_{d \x d}$ satisfies \eqref{eq:Dphi_ODE} in $H^{s'-1}$ and $A(t)$ satisfies \eqref{eq:Dphi_general_ODE} in $H^{s-1}$, it follows that they are equal, $D\ph(t) - \on{Id}_{d\x d} = A(t)$, thus showing that $D\ph(t) - \on{Id}_{d\x d} \in H^{s-1}$.

In the general case we have $s' + k < s \leq s' + k + 1$ with $k \in \mathbb N$. The argument above proved the lemma for $k=0$. If $k \geq 1$, we apply the above argument with the pair $(s', s'+1)$ in the place of $(s', s)$. This shows that $\ph(t) \in \mc D^{s'+1}$. Then we can apply the argument with $(s'+1, s'+2)$ to obtain $\ph(t) \in \mc D^{s'+2}$ and so one shows inductively
\[
\ph(t) \in \mc D^{s'}
\Rightarrow
\ph(t) \in \mc D^{s'+1}
\Rightarrow \dots \Rightarrow
\ph(t) \in \mc D^{s'+k}
\Rightarrow
\ph(t) \in \mc D^{s}\,.
\]
In the last step we use the argument with the pair $(s'+k, s)$ to conclude that $\ph(t) \in \mc D^s$.
\end{proof}

As stated in the introduction to this section, we will first show the existence of flows for $H^s$ vector fields, when $s>d/2+2$. This involves applying the existence theorem for Carathéodory differential equations to the equation \eqref{eq:flow_again}.

\begin{lemma}
\label{lem:d2plus2_flow}
Let $s>d/2+2$ and $u \in L^1([0,1], H^s(\R^d,\R^d))$. Then $u$ has a flow in $\mc D^{s}(\R^d)$.
\end{lemma}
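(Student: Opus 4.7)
The plan is to regard \eqref{eq:flow_again} as a Carath\'eodory ODE on the Hilbert manifold $\mc D^{s-1}(\R^d)$ --- which is open in $\on{Id}+H^{s-1}(\R^d,\R^d)$ and genuine since $s-1 > d/2+1$ --- obtain a local flow there from the existence theorem summarised in App.~\ref{sec:caratheodory}, and then invoke Lem.~\ref{lem:flow_regularity} to lift the regularity from $\mc D^{s-1}$ back up to $\mc D^s$. The loss of one derivative is precisely what allows one to invoke the Lipschitz estimate on composition: the map $(f,\ph) \mapsto f \o \ph$ from $H^s \x \mc D^{s-1}$ into $H^{s-1}$ is locally Lipschitz in $\ph$ by Lem.~\ref{CompositionsLemma}~(3), whereas no analogous bound is available at the top level.

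To set up the Carath\'eodory framework I consider the time-dependent vector field
\[
U : I \x \mc D^{s-1}(\R^d) \to H^{s-1}(\R^d,\R^d)\,,\qquad U(t,\ph) = u(t) \o \ph\,.
\]
For fixed $\ph$, right-composition by $\ph$ is a bounded linear map $H^s \to H^s$ by Lem.~\ref{CompositionsLemma}~(2), so measurability of $u$ into $H^s$ transfers to measurability of $t \mapsto U(t,\ph)$ into $H^{s-1}$. For the Lipschitz bound in $\ph$, fix any bounded convex neighbourhood $U_0 \subset \mc D^{s-1}(\R^d)$ on which the $\det$-and-norm constraints of Lem.~\ref{CompositionsLemma}~(3) hold; applying that lemma with $s' = s-1 > d/2$ produces a constant $C$ such that
\[
\| U(t,\ph) - U(t,\ps) \|_{H^{s-1}} \leq C \| u(t) \|_{H^s}\, \| \ph - \ps \|_{H^{s-1}}
\]
for all $\ph,\ps \in U_0$. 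Since $\ell(t) = C\|u(t)\|_{H^s}$ lies in $L^1(I)$, the Carath\'eodory hypotheses are satisfied, and the existence theorem yields a local $\mc D^{s-1}$-valued solution starting from $\on{Id}$.

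To extend this local solution to all of $I$, I would combine the Carath\'eodory existence with the a priori bound from the proof of Lem.~\ref{lem:composition_bound_Rd}, using the decomposition principle of Rem.~\ref{rem:decomposition_principle}. Fix $\varepsilon > 0$ small enough that $\on{Id}+B_\varepsilon^{s-1}(0) \subset \mc D^{s-1}(\R^d)$ and Lem.~\ref{CompositionsLemma}~(3) applies throughout this ball; partition $I$ into finitely many subintervals $[t_j,t_{j+1}]$ on each of which $\int_{t_j}^{t_{j+1}} \|u(\tau)\|_{H^s}\,d\tau < \varepsilon/(2C)$. If the maximal local solution on $[t_j,t_{j+1}]$ with initial value $\on{Id}$ terminated at some $T^* \leq t_{j+1}$, the integral equation would give $\|\ps(t)-\on{Id}\|_{H^{s-1}} \leq C\int_{t_j}^{t}\|u(\tau)\|_{H^s}\,d\tau < \varepsilon/2$ for all $t<T^*$, trapping $\ps$ in a compact region of $\mc D^{s-1}$ where the Lipschitz constant stays $L^1$-integrable; this contradicts maximality. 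Hence the flow exists on the full subinterval, and concatenating the pieces by right-composition as in Rem.~\ref{rem:decomposition_principle} produces a curve $\ph \in C(I, \mc D^{s-1}(\R^d))$ satisfying~\eqref{eq:flow_Ds}.

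Finally, the pair $(s-1, s)$ satisfies $d/2+1 < s-1 \leq s$, so Lem.~\ref{lem:flow_regularity} promotes the $\mc D^{s-1}$-valued flow into a $\mc D^s$-valued flow, completing the argument. The main delicate point is the global extension step: the Carath\'eodory theorem alone only delivers a local solution, and one has to combine it with the a priori bound from the proof of Lem.~\ref{lem:composition_bound_Rd} to keep the trajectory inside the region where the Lipschitz estimate of Lem.~\ref{CompositionsLemma}~(3) holds; once that is done on a sufficiently fine partition of $I$, the patching into a global flow is routine.
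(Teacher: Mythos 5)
Your proposal is correct and follows essentially the same route as the paper: interpret the flow equation as a Carathéodory ODE for $U(t,\ph)=u(t)\o\ph$ on $\mc D^{s-1}(\R^d)$, use Lem.~\ref{CompositionsLemma} to supply the integrable bound $m(t)$ and Lipschitz function $\ell(t)=C\|u(t)\|_{H^s}$, reduce to small vector fields via Rem.~\ref{rem:decomposition_principle}, and lift regularity with Lem.~\ref{lem:flow_regularity}. The only cosmetic difference is that the paper gets existence on the whole (sub)interval directly from the smallness condition in Thm.~\ref{thm:caratheodory_exist} (taking $\de$ equal to the subinterval length), whereas you run a maximal-solution continuation argument; both work.
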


\begin{proof}
Define for $\ep>0$ the open ball
\[
B^{s-1}_\ep(0) = \left\{ f \in H^{s-1}(\R^d,\R^d) \,:\, \| f\|_{H^{s-1}} < \ep \right\}\,.
\]
Since $s-1>d/2+1$, we obtain by Lem. \ref{CompositionsLemma}
an $\ep>0$ and a constant $C=C(\ep)$, such that $\on{Id} + B^{s-1}_\ep(0) \subseteq \mc D^{s-1}(\R^d)$ and the estimates
\begin{align*}
\| u \o \ph_1 - u \o \ph_2 \|_{H^{s-1}} &\leq C \| u \|_{H^s} \| \ph_1 - \ph_2 \|_{H^{s-1}} \\
\| u \o \ph \|_{H^{s-1}} &\leq C \| u \|_{H^{s-1}}
\end{align*}
are valid for all $u \in H^{s}$ and all $\ph, \ph_1, \ph_2 \in \on{Id}+B^{s-1}_\ep(0)$.

Using the decomposition method in Rem. \ref{rem:decomposition_principle} it is enough to show the existence of the flow when $C \| u \|_{L^1} < \ep$. Under this assumption, define the vector field
\[
U : I \x B^{s-1}_{\ep}(0) \to H^{s-1}(\R^d,\R^d)\,,\qquad U(t,f) = u(t) \o (\on{Id}+f)\,,
\]
where $u(t)$ is given. The mapping $U$ has the Carathéodory property, Def. \ref{def:caratheodory}, because composition is continuous in $\mc D^{s-1}(\R^d)$ and $H^{s-1}$ is separable. The functions $m(t)$ and $\ell(t)$ required in Thm. \ref{thm:caratheodory_exist} are given by $m(t) = C\,\|u(t)\|_{H^{s-1}}$ and $\ell(t) = C\,\|u(t)\|_{H^s}$. 
Then by Thm. \ref{thm:caratheodory_exist} we have a solution $\ph \in C([0, 1], \mc D^{s-1}(\R^d))$ of the equation
\[
\ph(t) = \on{Id} + \int_{0}^t u(\ta) \o \ph(\ta) \ud \ta\,.
\]
Thus $\ph$ is the $\mc D^{s-1}(\R^d)$-valued flow of $u$ and Lem. \ref{lem:flow_regularity} shows that in fact $\ph$ is $\mc D^{s}(\R^d)$-valued.
\end{proof}

The next lemma shows how to approximate vector fields in $H^s(\R^d)$ by a sequence of vector fields in $H^{s+1}(\R^d)$, whilst preserving integrability in time.

\begin{lemma}
\label{lem:uk_approx}
Let $s\geq 0$ and $f \in L^1(I,H^s(\R^d))$. For $k\geq 0$, define $\ch(\xi) = \mathbbm 1_{\{|\xi| \leq k\}}(\xi)$ and let $\ch_k(D)$ be the corresponding Fourier multiplier. Then
\[
\ch_k(D) f \in L^1\!\left(I, H^{s+1}(\R^d)\right)\,,
\]
and $\ch_k(D) f \to f$ for $k \to \infty$ in $L^1(I, H^s(\R^d))$.
\end{lemma}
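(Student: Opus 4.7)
The plan is to argue by a direct Fourier-side estimate plus dominated convergence.

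First, for the regularity claim, I would observe that the multiplier $\chi_k(D)$ gains one derivative with a constant that depends on $k$ but not on the function: for any $g\in H^s(\R^d)$,
\[
\|\chi_k(D)g\|_{H^{s+1}}^2 = \int_{|\xi|\leq k}(1+|\xi|^2)^{s+1}|\mc F g(\xi)|^2\ud\xi \leq (1+k^2)\|g\|_{H^s}^2.
\]
Applying this pointwise in $t$ to $g = f(t)$ and integrating gives $\|\chi_k(D)f\|_{L^1(I,H^{s+1})} \leq \sqrt{1+k^2}\,\|f\|_{L^1(I,H^s)} < \infty$, together with measurability of $t \mapsto \chi_k(D)f(t)$ as an $H^{s+1}$-valued map (which follows from continuity of $\chi_k(D) : H^s \to H^{s+1}$ applied to the Bochner-measurable $f$). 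This establishes the first assertion.

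For the convergence claim, I would first show pointwise convergence in $t$. For each fixed $t\in I$,
\[
\|\chi_k(D)f(t) - f(t)\|_{H^s}^2 = \int_{|\xi|>k}(1+|\xi|^2)^s |\mc F f(t)(\xi)|^2 \ud\xi,
\]
and this tends to $0$ as $k\to\infty$ by the (scalar) dominated convergence theorem applied on $\R^d$, since the integrand is dominated by the $L^1$ function $(1+|\xi|^2)^s|\mc F f(t)(\xi)|^2$ and the domain of integration shrinks to a null set.

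Finally, I would upgrade pointwise convergence in $t$ to $L^1$ convergence via a second application of dominated convergence, this time on $I$. The uniform bound $\|\chi_k(D)g\|_{H^s} \leq \|g\|_{H^s}$ (which is immediate from $|\chi(\xi)|\leq 1$ and Plancherel) gives
\[
\|\chi_k(D)f(t) - f(t)\|_{H^s} \leq 2\|f(t)\|_{H^s},
\]
and the right-hand side is in $L^1(I)$ by hypothesis. Together with the pointwise convergence established above, dominated convergence yields $\chi_k(D)f \to f$ in $L^1(I,H^s)$, as desired.

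There is no real obstacle here; the only point requiring a modicum of care is to ensure the dominating function $2\|f(t)\|_{H^s}$ is independent of $k$, which is exactly what the trivial bound $\|\chi_k(D)\|_{H^s\to H^s}\leq 1$ provides.
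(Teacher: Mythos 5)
Your proof is correct and follows essentially the same route as the paper: the $(1+k^2)$ multiplier bound for the regularity claim, then dominated convergence on $\R^d$ for pointwise-in-$t$ convergence and a second application of dominated convergence on $I$ (with the dominating function $2\|f(t)\|_{H^s}$, using $\|\chi_k(D)\|_{H^s\to H^s}\leq 1$) for the $L^1$ convergence. The added remarks on Bochner measurability and the $k$-independence of the dominating function are fine but not substantively different from the paper's argument.
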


\begin{proof}
We have for all $t \in I$,
\[
\| \ch_k(D) f(t)\|_{H^{s+1}(\R^d)}^2 =
\int_{|\xi|\leq k} (1+|\xi|^2)^{s+1} |\wh {f(t)}(\xi)|^2 \ud \xi \leq
(1+k^2) \| f(t)\|_{H^{s}(\R^d)}^2\,,
\]
and thus $\ch_k(D) f \in L^1\!\left(I, H^{s+1}(\R^d)\right)$; in fact we have $\ch_k(D) f(t) \in H^\infty$, but this will not be needed here.

To show convergence we note that
\[
\| \ch_k(D) f(t) - f(t) \|_{H^{s}(\R^d)}^2 = 
\int_{|\xi|>k} (1+|\xi|^2)^s |\wh {f(t)}(\xi)|^2 \ud \xi \leq
\| f(t)\|_{H^{s}(\R^d)}^2\,.
\]
By the theorem of dominated convergence we obtain first
\[
\int_{|\xi|>k} (1+|\xi|^2)^s |\wh {f(t)}(\xi)|^2 \ud \xi \to 0\,,
\]
for all $t \in I$ and thus $\ch_k(D) f(t) \to f(t)$ in $H^s(\R^d)$, and by applying it again
\[
\lim_{k\to\infty} \| \ch_k(D) f - f\|_{L^1(I,H^s)} = 
\int_0^1 \lim_{k\to \infty} \| \ch_k(D) f(t) - f(t) \|_{H^{s}(\R^d)} \ud t = 0
\]
showing that $\ch_k(D) f \to f$ in $L^1$.
\end{proof}

We are now ready to prove the main theorem.

\begin{theorem}
\label{thm:d2plus1_flow}
Let $s>d/2+1$ and $u \in L^1(I, H^s(\R^d,\R^d))$. Then $u$ has a $\mc D^{s}(\R^d)$-valued flow and the map
\[
\on{Fl} : L^1(I, H^s(\R^d,\R^d)) \to C(I, \mc D^{s}(\R^d))\,,\quad u \mapsto \ph
\]
is continuous.
\end{theorem}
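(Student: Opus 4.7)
The plan is to use the Fourier cutoff of Lemma~\ref{lem:uk_approx} in tandem with Lemma~\ref{lem:d2plus2_flow}, Theorem~\ref{thm:flow_convergence}, and Lemma~\ref{lem:flow_regularity}, following the strategy advertised in the introduction to this section.

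\emph{Existence.} Given $u \in L^1(I, H^s)$, set $u^k = \chi_k(D) u$; then $u^k \in L^1(I, H^{s+1})$ with $u^k \to u$ in $L^1(I, H^s)$ by Lemma~\ref{lem:uk_approx}. Since $s+1 > d/2+2$, Lemma~\ref{lem:d2plus2_flow} applied with $s+1$ in place of $s$ produces a $\mc D^{s+1}$-valued flow $\ph^k$, hence $\ph^k \in C(I, \mc D^s)$. Theorem~\ref{thm:flow_convergence} then yields a map $\ph : I \x \R^d \to \R^d$ such that $\ph^k \to \ph$ in $C(I, \mc D^{s'})$ for every $d/2+1 < s' < s$ and $\ph$ is the $\mc D^{s'}$-valued flow of $u$. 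Finally, Lemma~\ref{lem:flow_regularity} applied with the pair $(s', s)$ upgrades $\ph$ to a $\mc D^s$-valued flow of $u$.

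\emph{Continuity.} Suppose $u_n \to u$ in $L^1(I, H^s)$, with $\mc D^s$-valued flows $\ph_n$ and $\ph$. Theorem~\ref{thm:flow_convergence} provides $\ph_n \to \ph$ in $C(I, \mc D^{s'})$ for every $d/2+1 < s' < s$; picking $s'$ close enough to $s$ so that $s' > s-1$ and using the embedding $H^{s'} \hookrightarrow H^{s-1}$ yields $\ph_n \to \ph$ in $C(I, H^{s-1})$. To promote this to $C(I, \mc D^s)$, I work with the linear matrix ODE from the proof of Lemma~\ref{lem:flow_regularity},
\[
\p_t(D\ph_n - D\ph) = (Du_n \o \ph_n)(D\ph_n - D\ph) + R_n, \quad R_n := (Du_n \o \ph_n - Du \o \ph)\, D\ph,
\]
and apply Gronwall in $H^{s-1}$. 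Since $H^{s-1}$ is a Banach algebra ($s-1 > d/2$) and $\|Du_n \o \ph_n\|_{H^{s-1}} \leq C\|u_n\|_{H^s}$ by Lemma~\ref{lem:composition_bound_Rd}, the Gronwall exponential factor is uniformly bounded in $n$, and it remains to show $\|R_n\|_{L^1(I, H^{s-1})} \to 0$. Splitting $Du_n \o \ph_n - Du \o \ph = (Du_n - Du) \o \ph_n + (Du \o \ph_n - Du \o \ph)$, the first summand is bounded in $H^{s-1}$ by $C\|u_n - u\|_{H^s}$ (Lemma~\ref{lem:composition_bound_Rd}), integrating to $C\|u_n - u\|_{L^1(I, H^s)} \to 0$. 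For the second, I approximate $Du \in L^1(I, H^{s-1})$ by $\chi_\ell(D) Du \in L^1(I, H^s)$: for fixed $\ell$, Lemma~\ref{CompositionsLemma}~(3) at regularity $s' = s-1$ (admissible because $\chi_\ell(D) Du \in H^s$) combined with $\ph_n \to \ph$ in $C(I, H^{s-1})$ and dominated convergence gives $\chi_\ell(D) Du \o \ph_n \to \chi_\ell(D) Du \o \ph$ in $L^1(I, H^{s-1})$; the approximation error is dominated uniformly in $n$ by $2C\|Du - \chi_\ell(D) Du\|_{L^1(I, H^{s-1})}$, which tends to $0$ as $\ell \to \infty$. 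Sending first $n \to \infty$ and then $\ell \to \infty$ closes the estimate. Combined with $\ph_n - \ph \to 0$ in $C(I, L^2)$ (inherited from $C(I, H^{s-1})$), the convergence $D\ph_n - D\ph \to 0$ in $C(I, H^{s-1})$ gives $\ph_n \to \ph$ in $C(I, \mc D^s)$.

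\emph{Main obstacle.} The derivative loss in Lemma~\ref{CompositionsLemma}~(3) --- which requires $v \in H^{s+1}$ to estimate $\|v \o \ph - v \o \psi\|_{H^s}$ Lipschitz in $\ph - \psi$ --- precludes a direct Gronwall closure in $H^s$. The trick is to drop one derivative and work with $D\ph - I \in H^{s-1}$, where the ambient algebra $H^{s-1}$ makes all quadratic composition terms tractable; a secondary Fourier cutoff then absorbs the remaining mismatch between $Du$ (only in $H^{s-1}$) and the $H^s$-regularity needed by the composition-difference estimate of Lemma~\ref{CompositionsLemma}~(3).
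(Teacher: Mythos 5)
Your proposal follows essentially the same route as the paper: the existence part (Fourier cutoff via Lem.~\ref{lem:uk_approx}, flows of the regularized fields via Lem.~\ref{lem:d2plus2_flow} at order $s+1$, passage to the limit via Thm.~\ref{thm:flow_convergence}, regularity upgrade via Lem.~\ref{lem:flow_regularity}) is identical, and your continuity argument uses the same device as the paper, namely a Gronwall estimate for $D\ph_n - D\ph$ in the Banach algebra $H^{s-1}$ with the same splitting of the inhomogeneous term. The only divergence is how you show $\int_0^1 \left\| Du \o \ph_n - Du \o \ph \right\|_{H^{s-1}} \ud \ta \to 0$: the paper simply observes that composition $H^{s-1} \x \mc D^{s'} \to H^{s-1}$ is continuous, so the integrand tends to $0$ pointwise in $\ta$, and concludes by dominated convergence with dominating function $2C\| u(\ta)\|_{H^s}$; your secondary cutoff $\ch_\ell(D)Du$ combined with Lem.~\ref{CompositionsLemma}(3) is therefore not needed, although it can be made to work.

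One caveat about your variant: Lem.~\ref{CompositionsLemma}(3) requires $\ph_n(\ta)$ and $\ph(\ta)$ to lie in a common \emph{convex} set $U \subset \mc D^s(\R^d)$ on which $\inf_x \det D\ph$ is uniformly bounded below and $\| \ph - \on{Id}\|_{H^s}$ uniformly bounded above, and you never exhibit such a $U$. Uniform bounds on $\|\ph_n(t) - \on{Id}\|_{H^s}$ and on $\det D\ph_n(t)$ do hold because the $\|u_n\|_{L^1}$ are bounded, but convexity of the containing set is not automatic. The standard repair, used throughout the paper, is the decomposition principle of Rem.~\ref{rem:decomposition_principle}: reduce to $C\|u\|_{L^1} < \ep$ (hence $C\|u_n\|_{L^1} < \ep$ for large $n$), so that all the flows remain in the convex ball $\on{Id} + B^s_\ep(0)$, on which Lem.~\ref{CompositionsLemma}(1) supplies the determinant bound; this is exactly how the analogous estimate \eqref{eq:tmp_ineq2} is justified in the proof of Thm.~\ref{thm:flow_convergence}. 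With that reduction (or by switching to the paper's dominated-convergence argument) your proof closes.
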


\begin{proof}
Given $u \in L^1(I, H^s)$, it follows from Lem. \ref{lem:uk_approx} that there exists a sequence $u^n \in L^1(I, H^{s+1})$ converging to $u$,
\[
u^n \to u \text{ in } L^1(I,H^s(\R^d,\R^d))\,.
\]
According to Lem. \ref{lem:d2plus2_flow}, each $u^n$ has a $\mc D^s(\R^d)$-valued flow; in fact they have $\mc D^{s+1}(\R^d)$-valued flows. As $u^n \to u$ in $L^1$, it was shown in Thm. \ref{thm:flow_convergence} that $u$ itself has a $\mc D^{s'}(\R^d)$-valued flow $\ph$ for each $s'$ with $d/2+1 < s'< s$ and that $\ph^n \to \ph$ in $C(I, \mc D^{s'}(\R^d))$. Finally we use the regularity result from Lem. \ref{lem:flow_regularity} to conclude that the flow $\ph$ of $u$ is $\mc D^s(\R^d)$-valued.

To prove the continuity of the flow map, consider a sequence $u^n$ converging to $u$ in $L^1(I, H^s)$ and denote by $\ph^n$ and $\ph$ the $\mc D^s$-valued flows of $u^n$ and $u$ respectively. The $H^s$-norm $\| u\|_{H^s}$ is equivalent to the norm $\| u \|_{L^2} + \| D u\|_{H^{s-1}}$ and since $\ph^n(t) \to \ph(t)$ uniformly in $\mc D^{s-1}(\R^d)$, we only need to show that $D\ph^n(t) - D\ph(t) \to 0$ uniformly in $H^{s-1}$. We will do this by applying Gronwall's lemma to
\[
D\ph^n(t) - D\ph(t) = \int_{0}^t \left( Du^n(\ta) \o \ph^n(\ta) \right). D\ph^n(\ta) - 
\left( Du(\ta) \o \ph(\ta) \right). D\ph(\ta) \ud \ta\,.
\]
Taking norms we obtain
\begin{align*}
\| D\ph^n(t) &- D\ph(t) \|_{H^{s-1}} 
\leq{} \\
&\leq \int_{0}^t \left\| \left( Du^n(\ta) \o \ph^n(\ta) \right). 
\left(D\ph^n(\ta) - D \ph(\ta)\right) \right\|_{H^{s-1}} + \\
&\qquad\qquad+\left\| \left( Du^n(\ta) \o \ph^n(\ta) - 
Du(\ta) \o \ph(\ta) \right). D\ph(\ta) \right\|_{H^{s-1}} \ud \ta\\
&\leq \int_{0}^t C \left\| Du^n(\ta) \o \ph^n(\ta) \right\|_{H^{s-1}} 
\left\| D\ph^n(\ta) - D \ph(\ta) \right\|_{H^{s-1}} + \\
&\;\;\quad{}+\left\| Du^n(\ta) \o \ph^n(\ta) - 
Du(\ta) \o \ph(\ta)  \right\|_{H^{s-1}} \cdot {}\\
&\qquad\qquad\qquad\qquad\qquad\qquad {}\cdot \left(1 + C
 \left\| D\ph(\ta) - \on{Id}_{d\x d}\right\|_{H^{s-1}} \right) \ud \ta
\end{align*}
and the constant $C$ arises from the boundedness of pointwise multiplication.

Choose $s'$ with $s-1 < s' < s$ and $s'>d/2+1$. As $\ph(I) \subset \mc D^{s'}(\R^d)$ is compact and $\ph^n(t) \to \ph(t)$ uniformly in $\mc D^{s'}(\R^d)$, it follows that the set $\{ \ph^n(t) : t \in I, n \in \mathbb N\}$ satisfies the assumptions of Lem. \ref{CompositionsLemma} (\ref{lem:item2_technical_composition})., i.e., $\det D\ph^n(t,x)$ is bounded from below and $\|\ph^n(t) - \on{Id}\|_{H^{s'}}$ is bounded from above. Thus
\[
\| D u^n(\ta) \o \ph^n(\ta) \|_{H^{s-1}} \leq C_1 \| Du^n(\ta)\|_{H^{s-1}} \leq C_2 \|u^n(\ta)\|_{H^s}\,.
\]
Also note that $\| D\ph(\ta) -\on{Id}_{d\x d}\|_{H^{s-1}}$ is bounded, since $\ph(I)$ is compact in $\mc D^s(\R^d)$. Next we estimate -- omitting the argument $\ta$ from now on --
\begin{align*}
\left\| Du^n \o \ph^n - 
Du \o \ph \right\|_{H^{s-1}}
&\leq \left\| \left( Du^n - Du \right) \o \ph^n \right\|_{H^{s-1}} + 
\left\| Du \o \ph^n - Du \o \ph  \right\|_{H^{s-1}} \\
& \leq C_2 \| u^n - u \|_{H^s} + \left\| Du \o \ph^n - Du \o \ph  \right\|_{H^{s-1}}\,.
\end{align*}
Hence
\begin{align*}
\| D\ph^n(t) &- D\ph(t) \|_{H^{s-1}}
\leq C_3 \int_{0}^t \left\| u^n \right\|_{H^s}
\left\| D\ph^n - D \ph \right\|_{H^{s-1}} \ud \ta +{} \\
&\quad + C_4 \| u^n - u \|_{L^1(I,H^s)} + C_5 \int_0^1 \left\| Du \o \ph^n - Du \o \ph  \right\|_{H^{s-1}} \ud \ta\,.
\end{align*}
In the last integral we note that since composition is a continuous map $H^{s-1} \x \mc D^{s'} \to H^{s-1}$, the integrand converges pointwise to 0 as $n\to \infty$. Because
\[
\| Du \o \ph^n - Du \o \ph\|_{H^{s-1}} \leq 2C_1 \| Du\|_{H^{s-1}} \leq 2C_1 \| u \|_{H^s}\,,
\]
we can apply the theorem of dominated convergence to conclude that
\[
\int_0^1 \left\| Du \o \ph^n - Du \o \ph  \right\|_{H^{s-1}} \ud \ta \to 0 \text{ as } n \to \infty\,.
\]
Thus we obtain via Gronwall's inequality
\begin{multline*}
\| D\ph^n(t) - D\ph(t) \|_{H^{s-1}} \leq \\
\leq \left( C_4 \| u^n - u \|_{L^1(I,H^s)} 
+ C_5 \int_0^1 \left\| Du \o \ph^n - Du \o \ph  \right\|_{H^{s-1}} \ud \ta \right) \cdot \\
\cdot \left(1 + C_3 \| u^n \|_{L^1(I,H^s)} \exp\left(\| u^n \|_{L^1(I,H^s)}\right)\right)\,,
\end{multline*}
the required uniform convergence of $D\ph^n(t) -D\ph(t) \to 0$ in $H^{s-1}$.
\end{proof}

\section{Diffeomorphisms of a compact manifold}

\subsection{Sobolev spaces on domains} Let $U \subset \R^d$ be a Lipschitz domain, i.e., a bounded open set with a Lipschitz boundary. For $s \in \R$ we can define the Sobolev space on $U$ as the set of restrictions of functions on the whole space,
\[
H^s(U, \R^n) = \left\{ g|_U \,:\, g \in H^s(\R^d,\R^n) \right\}\,,
\]
and a norm is given by
\[
\| f \|_{H^s(U)} = \inf \left\{ \| g \|_{H^s(\R^d)} \,:\, g|_U = f \right\}\,.
\]
For each Lipschitz domain $U$ and each $s \in \R$, there exists an extension operator -- see \cite{Rychkov1999} -- i.e., a bounded linear map
\[
E_U : H^s(U, \R^n) \to H^s(\R^d,\R^n)\,.
\]

\subsection{Sobolev spaces on compact manifolds}
Throughout this section, we make the following assumption:
\begin{quote}
$M$ is a $d$-dimensional compact manifold and $N$ an $n$-di\-men\-sio\-nal manifold, both without boundary.
\end{quote}

For $s \geq 0$ a function $f : M \to \R$ belongs to $H^s(M)$, if around each point there exists a chart $\ch : \mc U \to U \subset \R^d$, such that $f \o \ch\i \in H^s(U,\R)$. Similarly the space $\mf X^s(M)$ of vector fields consists of sections $u : M \to TM$, such that around each point there exists a chart with $T\ch \o u \o \ch\i \in H^s(U,\R^d)$.

To define the spaces $H^s(M,N)$ we require $s > d/2$. A continuous map $f:M\to N$ belongs to $H^s(M,N)$, if for each point $x \in M$, there exists a chart $\ch : \mc U \to U\subseteq \R^d$ of $M$ around $x$ and a chart $\et : \mc V \to V\subseteq \R^n$ of $N$ around $f(x)$, such that $\et\o f \o\ch\i \in H^s(U,\R^n)$. If $N=\R$, then $H^s(M) = H^s(M,\R)$ and $\mf X^s(M) \subset H^s(M,TM)$ consists of those $u \in H^s(M,TM)$ with $\pi_{TM} \o u = \on{Id}_M$.

In order to define norms on $H^s(M)$ and $\mf X^s(M)$ and to introduce a differentiable structure on $H^s(M,N)$, we define, following \cite{Inci2013}, a special class of atlases.

\begin{definition}
A cover $\mc U_I = (\mc U_i)_{i \in I}$ of $M$ by coordinate charts $\ch_i : \mc U_i \to U_i \subset \R^d$ is called a \emph{fine cover}, if
\begin{enumerate}[(C1)]
\item $I$ is finite and $U_i$ are bounded Lipschitz domains in $\R^d$.
\item
If $\mc U_i \cap \mc U_j \neq \emptyset$, then 
$\ch_j\o\ch_i\i \in C^\infty_b\big( \overline{ \ch_i(\mc U_i \cap \mc U_j)}, \R^d \big)$.
\item
If $\mc U_i \cap \mc U_j \neq \emptyset$, then the boundary of $\ch_i(\mc U_i \cap \mc U_j)$ is a bounded Lipschitz domain.
\end{enumerate}
\end{definition}

The spaces $H^s(M)$ and $\mf X^s(M)$ are Hilbert spaces and a norm can be defined by choosing a fine cover $\mc U_I$ of $M$. On $H^s(M)$ the norm is
\[
\| u \|^2_{H^s, \mc U_I} = \sum_{i \in I} \left \| u \o \ch_i\i \right\|^2_{H^s(U_i)}\,.
\]
Similarly for vector fields $u \in \mf X^s(M)$ we define
\[
\| u \|^2_{H^s,\mc U_I} = \sum_{i \in I}
\left\| T\ch_i \o u \o \ch_i\i \right\|^2_{H^s(U_i,\R^d)}\,.
\]
In the above formula we identify the coordinate expression $T\ch_i \o u \o \ch_i\i : U_i \to TU_i$ with a map $U_i \to \R^d$, obtained by projecting $TU_i = U_i \x \R^d$ to the second component. The norms depend on the chosen cover, but choosing another fine cover will lead to equivalent norms. We will write $\| u\|_{H^s}$ for the norms on $H^s(M)$ and $\mf X^s(M)$.

\subsection{Diffeomorphism groups on compact manifolds}

To define a differentiable structure on $H^s(M,N)$ we introduce the notion of adapted fine covers. For details on these constructions and full proofs we refer the reader to \cite[Sect.~3]{Inci2013}. 

\begin{definition}
A triple $(\mc U_I, \mc V_I, f)$ consisting of $f \in H^s(M,N)$, a fine cover $\mc U_I$ of $M$ and a fine cover of $\mc V_I$ of $\bigcup_{i \in I} \mc V_i \subseteq N$ is called a \emph{fine cover with respect to $f$} or \emph{adapted to $f$}, if $\overline{f(\mc U_i)} \subseteq \mc V_i$ for all $i \in I$.
\end{definition}

Given $f \in H^s(M,N)$ one can show that there always exists a fine cover adapted to it. Let $(\mc U_I, \mc V_I, f)$ be such a fine cover and define the subset $\mc O^s = \mc O^s(\mc U_I, \mc V_I)$,
\[
\mc O^s = \left\{
h \in H^s(M,N) \,:\, \overline{h(\mc U_i)} \subseteq \mc V_i \right\}\,,
\]
as well as the map
\[
\imath = \imath_{\mc U_I, \mc V_I} : \mc O^s \to \bigoplus_{i \in I} H^s(U_i, \R^d),\qquad
h \mapsto \left( \et_i \o h \o \ch_i\i\right)_{i \in I}\,,
\]
where $\ch_i : \mc U_i \to U_i$ and $\et_i: \mc V_i \to V_i$ are the charts associated to $\mc U_i$ and $\mc V_i$ respectively. Then $\imath(\mc O^s)$ is a $C^\infty$-submanifold of $\bigoplus_{i \in I} H^s(U_i,\R^d)$. We define a topology on $H^s(M,N)$ by letting the sets $\mc O^s(\mc U_I, \mc V_I)$ form a basis of open sets and we use the maps $\imath_{\mc U_I,\mc V_I}$ to define a differentiable structure making $H^s(M,N)$ into a $C^\infty$-Hilbert manifold. This differentiable structure is compatible with the one introduced in \cite{Eells1966, Palais1968} and used in \cite{Ebin1970}.

For $s > d/2+1$ the diffeomorphism group $\mc D^s(M)$ can be defined by
\begin{align*}
\mc D^s(M) &= \{ \ph \in H^s(M, M) \,:\, \ph \text{ bijective, }
\ph\i \in H^s(M, M) \} \\
&= \{ \ph \in H^s(M, M) \,:\, 
\ph \in \on{Diff}^1(M) \}\,, 
\end{align*}
with $\on{Diff}^1(M)$ denoting $C^1$-diffeomorphisms of $M$. The diffeomorphism group is an open subset of $H^s(M,M)$ and a topological group.

It will later be convenient to work with fine covers $(\mc U_I, \mc V_I, \on{Id})$ of $M$ adapted to the identity map with the additional constraint, that the coordinate charts of $\mc U_I$ and $\mc V_I$ are the same, i.e., $\ch_i=\et_i|_{\mc U_i}$. Such covers can always be constructed by starting with a fine cover $\mc V_I$ of $M$ and shrinking each set $\mc V_i$ slightly to $\mc U_i$, so that the smaller sets still cover $M$ and $\ol{\mc U_i} \subseteq \mc V_i$. Then $(\mc U_I, \mc V_I, \on{Id})$ is an adapted cover.

\subsection{Flows on compact manifolds}
Given a vector field $u \in L^1(I,\mf X^s(M))$ with $I$ a compact interval containing $0$, we call a map $\ph : I\x M \to M$ the \emph{pointwise flow} of $u$, if $\ph(0,x) = x$ and for each pair $(t,x) \in I \x M$ there exists a coordinate chart $\ch: \mc U \to U$ around $x$, a chart $\et: \mc V \to V$ around $\ph(t,x)$, such that with $v = T\et \o u \o \et\i$ and $\ps = \et \o \ph \o \ch\i$ the flow equation
\[
\ps(s, y) = \ps(t, x) + \int_t^s v(\ta, \ps(\ta, y)) \ud \ta
\]
holds for $(s,y)$ close to $(t, \ch(x))$. For smooth vector fields this coincides with the usual definition of a flow.

If additionally $\ph \in C(I, \mc D^s(M))$, i.e., $\ph$ is a continuous curve with values in $\mc D^s(M)$, then we call $\ph$ the \emph{$\mc D^s(M)$-valued flow} of $u$. In this case let $(\mc U_I, \mc V_I, \ph(t))$ be a fine cover adapted to $\ph(t)$ with $t \in I$ and set $u_i(t) = T\et_i \o u(t) \o \et_i\i$ and $\ph_i(t) = \et_i \o \ph(t) \o \ch_i\i$. Then
\[
\ph_i(s) = \ph_i(t) + \int_t^s u_i(\ta) \o \ph_i(\ta) \ud \ta
\]
holds for $s$ close to $t$ as an identity in $H^s(U_i,\R^d)$.

\subsection{Existence of flows}
\label{sec:flows_in_charts}

To deal with vector fields and flows on $M$, we need to pass to coordinate charts. The following is a general technique, that will be useful throughout the section. Fix a fine cover $(\mc U_J, \mc V_J, \on{Id})$ of $M$ with respect to $\on{Id}$ with $\ch_j = \et_j|_{\mc U_j}$ and let $u \in L^1(I, \mf X^s(M))$ be a vector field. We define its coordinate expression
\[
v_j = T \et_j \o u \o \et_j\i \text{ and } v_j \in L^1(I, \mf X^s(V_j))\,,
\]
and extend these vector fields to all of $\R^d$ using the extension operators $E_{V_j}$,
\[
w_j = E_{V_j}v_j \text{ and } w_j \in L^1(I, \mf X^s(\R^d))\,.
\]
Note that the norms
\begin{equation}
\label{eq:norms_equivalent_M}
\| u \|_{L^1(I, \mf X^s(M))} \sim
\sum_{j \in J} \| v_j \|_{L^1(I, \mf X^s(V_j))} \sim
\sum_{j \in J} \| w_j \|_{L^1(I, \mf X^s(\R^d))}
\end{equation}
are all equivalent. From Thm. \ref{thm:d2plus1_flow} we know, that the vector fields $w_j$ have flows
\[
\ps_j = \on{Fl}(w_j) \text{ and } \ps_j \in C(I, \mc D^s(\R^d))\,.
\]
To glue them together to a flow of $u$, the flows $\ps_j$ must not be too far away from the identity. To ensure this, we fix $\ep$ given in Lem. \ref{lem:localize_flow} and assume from now onwards, that $\| u \|_{L^1(I, \mf X^s(M))} < \ep$. Then Lem. \ref{lem:localize_flow} implies that $\ol{\ps_j(U_j)} \subseteq V_j$ and we define
\begin{equation}
\label{eq:def_phi_on_M}
\ph(t)|_{\mc U_j} = \ch_j\i \o \ps_j(t) \o \ch_j\,.
\end{equation}
It is shown in Lem. \ref{lem:phi_well_defined}, that $\ph(t)$ is well-defined and that $\ph(t) \in \mc D^s(M)$. It also follows from \eqref{eq:def_phi_on_M} that $\ol{\ph(t)(\mc U_j)} \subseteq \mc V_j$ and thus $\ph(t) \in \mc O^s(\mc U_J, \mc V_J)$ and
\[
\imath(\ph(t)) = \left( \ps_j(t)|_{\mc U_j}\right)_{j \in J} \in
\bigoplus_{j \in J} H^s(U_j, \R^d)\,.
\]
Obviously $\ph$ is the $\mc D^s$-valued flow of $u$. This leads us to the following result on existence and continuity of the flow map.

\begin{theorem}
\label{thm:d2plus1_flow_M}
Let $s > d/2+1$ and $u \in L^1(I, \mf X^s(M))$. Then $u$ has a $\mc D^s$-valued flow $\ph$ and for each $t \in I$ the map
\[
\on{Fl}_t : L^1(I,\mf X^s(M)) \to \mc D^s(M),\qquad u \mapsto \ph(t)
\]
is continuous.

\end{theorem}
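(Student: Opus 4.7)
The plan is to carry out the construction already sketched in Section~\ref{sec:flows_in_charts} and then upgrade from small vector fields to arbitrary ones via the decomposition principle of Rem.~\ref{rem:decomposition_principle}. The key point is that once we pass to coordinate charts and extend the localized vector fields to all of $\R^d$, Thm.~\ref{thm:d2plus1_flow} does the heavy lifting on each chart, and Lem.~\ref{lem:localize_flow} together with Lem.~\ref{lem:phi_well_defined} guarantees that the local flows assemble into a single well-defined element of $\mc D^s(M)$.

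For existence, I would first fix a fine cover $(\mc U_J, \mc V_J, \on{Id})$ of $M$ with $\ch_j = \et_j|_{\mc U_j}$, and assume that $\| u \|_{L^1(I,\mf X^s(M))} < \ep$ with $\ep$ as in Lem.~\ref{lem:localize_flow}. Form $v_j = T\et_j \o u \o \et_j\i \in L^1(I, \mf X^s(V_j))$ and extend to $w_j = E_{V_j} v_j \in L^1(I, \mf X^s(\R^d))$; by \eqref{eq:norms_equivalent_M} the extension is bounded in $L^1$. Thm.~\ref{thm:d2plus1_flow} produces flows $\ps_j = \on{Fl}(w_j) \in C(I, \mc D^s(\R^d))$, Lem.~\ref{lem:localize_flow} ensures $\ol{\ps_j(U_j)} \subseteq V_j$, and formula \eqref{eq:def_phi_on_M} defines a candidate $\ph(t)$ which Lem.~\ref{lem:phi_well_defined} shows lies in $\mc D^s(M)$. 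That $\ph$ is the $\mc D^s$-valued flow of $u$ is immediate from the definition of the flow on a manifold and the fact that each $\ps_j$ solves the flow equation in $H^s(U_j,\R^d)$. For a general $u \in L^1(I,\mf X^s(M))$, apply Rem.~\ref{rem:decomposition_principle}: partition $I$ into finitely many subintervals on which the $L^1$-norm of $u$ is below $\ep$, construct the flow on each piece by the procedure above, and compose the pieces. Because $\mc D^s(M)$ is a topological group, the composition lives in $\mc D^s(M)$ and solves the flow equation on all of $I$.

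For continuity of $\on{Fl}_t$, consider $u^n \to u$ in $L^1(I,\mf X^s(M))$. In the small-norm regime, the coordinate expressions $v_j^n \to v_j$ and their extensions $w_j^n \to w_j$ converge in $L^1(I,\mf X^s(\R^d))$ by linearity and continuity of the extension operators. By Thm.~\ref{thm:d2plus1_flow} the flows satisfy $\ps_j^n \to \ps_j$ in $C(I, \mc D^s(\R^d))$. Since for large $n$ we still have $\ol{\ps_j^n(U_j)} \subseteq V_j$, the glued maps $\ph^n(t)$ eventually lie in the chart neighbourhood $\mc O^s(\mc U_J, \mc V_J)$ and satisfy
\[
\imath(\ph^n(t)) = \bigl( \ps_j^n(t)|_{\mc U_j} \bigr)_{j \in J} \longrightarrow \bigl( \ps_j(t)|_{\mc U_j} \bigr)_{j \in J} = \imath(\ph(t))
\]
in $\bigoplus_{j \in J} H^s(U_j, \R^d)$, uniformly in $t$. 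This is convergence of $\ph^n$ to $\ph$ in the topology of $\mc D^s(M)$. To upgrade to arbitrary $u$, use the decomposition principle together with the fact that the subdivision of $I$ produced for $u$ works for $u^n$ once $n$ is large enough (since the partial integrals of $\|u^n\|_{H^s}$ converge to those of $\|u\|_{H^s}$), so that $\ph^n(t)$ is a composition of finitely many factors each converging in $\mc D^s(M)$; continuity of group multiplication in $\mc D^s(M)$ then yields $\ph^n(t) \to \ph(t)$.

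The main obstacle I expect is purely bookkeeping: verifying that the glued map $\ph(t)$ defined chart by chart through \eqref{eq:def_phi_on_M} is independent of the choice of cover and of the extensions $E_{V_j}$, and that consequently the global continuity statement does not depend on these choices either. That compatibility is exactly the content of Lem.~\ref{lem:phi_well_defined}, so once it is invoked the rest of the argument reduces to combining Thm.~\ref{thm:d2plus1_flow}, Lem.~\ref{lem:localize_flow}, and the topological-group structure of $\mc D^s(M)$, together with the decomposition trick of Rem.~\ref{rem:decomposition_principle} to remove the smallness assumption.
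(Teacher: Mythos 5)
Your proposal is correct and follows essentially the same route as the paper: localize via a fine cover, extend with $E_{V_j}$, invoke Thm.~\ref{thm:d2plus1_flow} on $\R^d$, glue using Lem.~\ref{lem:localize_flow} and Lem.~\ref{lem:phi_well_defined}, deduce continuity from $\ps_j^n \to \ps_j$ in $C(I,\mc D^s(\R^d))$ via the chart map $\imath$, and remove the smallness assumption by Rem.~\ref{rem:decomposition_principle}. Your extra remark that the subdivision chosen for $u$ also works for $u^n$ when $n$ is large is a useful explicit justification of the final step, which the paper leaves to the general decomposition principle.
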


\begin{proof}
The above discussion shows the existence of a $\mc D^s$-valued flow $\ph$ for vector fields $u$ with $\| u \|_{L^1} < \ep$, with $\ep$ given by Lem. \ref{lem:localize_flow}. To show that $\on{Fl}_t$ is continuous, let $u^n \to u$ in $L^1(I,\mf X^s(M))$. Since the norms in \eqref{eq:norms_equivalent_M} are equivalent, it follows that $w^n_j \to w_j$ in $L^1(I,\mf X^s(\R^d))$ and by Thm. \ref{thm:d2plus1_flow} also $\ps^n_j \to \ps_j$ in $C(I, \mc D^s(\R^d))$. Thus we see that $\imath(\ph^n(t)) \to \imath(\ph(t))$ in $\bigoplus_{j \in J} H^s(U^j,\R^d)$, which implies $\ph^n(t) \to \ph(t)$ in $\mc D^s(M)$.

Using Rem. \ref{rem:decomposition_principle} we can extend these results from vector fields $u$ with $\| u \|_{L^1} < \ep$ to all vector fields.
\end{proof}

Now we prove the two lemmas, that were used in the discussion in \ref{sec:flows_in_charts}.

\begin{lemma}
\label{lem:localize_flow}
Let $s > d/2+1$ and $(\mc U_J, \mc V_J, \on{Id})$ be a fine cover of $M$ with respect to $\on{Id}$ with $\ch_j = \et_j$. Then there exists an $\ep > 0$, such that if $\| u \|_{L^1(I, \mf X^s(M))} < \ep$, then $\ol{\ps_j(t)(U_j)} \subseteq V_j$ for all $j \in J$.
\end{lemma}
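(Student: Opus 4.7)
The plan is to reduce the closed-set inclusion $\ol{\ps_j(t)(U_j)} \subseteq V_j$ to a uniform supremum estimate of the form $\sup_{x \in \R^d}|\ps_j(t,x) - x| < \de_j$, where $\de_j$ is a geometric constant measuring how far $\ol{U_j}$ sits inside $V_j$. The key structural input is that the fine cover was built by shrinking $\mc V_j$ to $\mc U_j$ with $\ol{\mc U_j} \subseteq \mc V_j$, and that $\ch_j = \et_j|_{\mc U_j}$: this gives $\ol{U_j} = \et_j(\ol{\mc U_j})$, a compact subset of the open set $V_j \subseteq \R^d$. Because $J$ is finite (condition (C1)), I can set
\[
\de_j = \on{dist}\bigl(\ol{U_j},\, \R^d \setminus V_j\bigr) > 0,
\qquad
\de = \tfrac12 \min_{j \in J} \de_j > 0.
\]

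Next I would estimate $\ps_j$ uniformly on $\R^d$. Since $s > d/2+1$ in particular $s > d/2$, the Sobolev embedding $H^s(\R^d,\R^d) \hookrightarrow C^0_b(\R^d,\R^d)$ has some constant $C_1$. By the construction recalled before the lemma, $\ps_j$ is the $\mc D^s(\R^d)$-valued flow of $w_j = E_{V_j} v_j \in L^1(I, H^s(\R^d,\R^d))$, so it satisfies the integral equation $\ps_j(t,x) - x = \int_0^t w_j(\ta, \ps_j(\ta,x))\ud\ta$ pointwise, hence
\[
\sup_{x \in \R^d} |\ps_j(t,x) - x|
\leq \int_I \|w_j(\ta)\|_\infty\, \ud\ta
\leq C_1 \|w_j\|_{L^1(I, H^s(\R^d))}.
\]
Using the norm equivalence \eqref{eq:norms_equivalent_M} together with the boundedness of each extension operator $E_{V_j}$, there is a further constant $C_2$, depending only on the fine cover, such that $\sum_{j \in J} \|w_j\|_{L^1(I, H^s(\R^d))} \leq C_2 \|u\|_{L^1(I,\mf X^s(M))}$. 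Choosing $\ep = \de/(C_1 C_2)$, the hypothesis $\|u\|_{L^1} < \ep$ yields $\sup_{x \in \R^d}|\ps_j(t,x) - x| < \de \leq \de_j/2$ for every $j \in J$ and every $t \in I$.

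Finally, I would turn this uniform bound into the stated closed-set inclusion. For any $x \in \ol{U_j}$ one has $|\ps_j(t,x) - x| < \de_j$, and by definition of $\de_j$ this forces $\ps_j(t,x) \in V_j$; hence $\ps_j(t)(\ol{U_j}) \subseteq V_j$. Since $\ol{U_j}$ is compact and $\ps_j(t)$ is continuous on $\R^d$, $\ps_j(t)(\ol{U_j})$ is a compact, hence closed, subset of $V_j$ that contains $\ps_j(t)(U_j)$, so $\ol{\ps_j(t)(U_j)} \subseteq \ps_j(t)(\ol{U_j}) \subseteq V_j$, as required. The only mildly delicate point is the last step — one really needs $\ol{U_j}$ to be compact in $\R^d$ (guaranteed by (C1)) so that the closure of the image is the image of the closure; everything else is a bookkeeping argument combining the $\R^d$-flow estimate of Thm.~\ref{thm:d2plus1_flow} with the equivalence of the norms in \eqref{eq:norms_equivalent_M}.
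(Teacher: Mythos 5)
Your argument is correct and follows essentially the same route as the paper: both reduce the inclusion to a uniform bound $\sup_{x}|\ps_j(t,x)-x|<\de$ with $\de$ determined by the gap between $\ol{U_j}$ and $\R^d\setminus V_j$, and both transfer the smallness of $\|u\|_{L^1(I,\mf X^s(M))}$ to the $w_j$ via the norm equivalence \eqref{eq:norms_equivalent_M}. The only cosmetic difference is that you obtain the sup-estimate directly from the pointwise flow equation and the Sobolev embedding, whereas the paper invokes the continuity statement of Thm.~\ref{thm:d2plus1_flow}; both are fine.
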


\begin{proof}
As $(\mc U_J, \mc V_J, \on{Id})$ is a fine cover, it follows that for $U_j = \ch_j(\mc U_j)$ and $V_j = \ch_j(\mc V_j)$ we have $\ol{U_j} \subseteq V_j$ and all sets are bounded. Thus there exists $\de > 0$, such that
\[
\ol{U_j + B_\de(0)} \subseteq V_j\,,
\]
and $B_\de(0)$ is the $\de$-ball in $\R^d$. By Thm. \ref{thm:d2plus1_flow} there exists $\ep$, such that if $\| w_j\|_{L^1} < \ep$, then $\| \ps_j - \on{Id} \|_{\infty} < \de$, i.e., for all $(t,x) \in I \x \R^d$ we have $| \ps(t,x) - x | < \de$; in particular this implies $\ps_j(t)(U_j) \subseteq U_j + B_\de(0)$ and thus $\ol{\ps_j(t)(U_j)} \subseteq V_j$. Using \eqref{eq:norms_equivalent_M} we can bound $\| w_j \|_{L^1}$ via a bound on $\| u \|_{L^1}$.
\end{proof}

\begin{lemma}
\label{lem:phi_well_defined}
Let $s > d/2+1$ and $(\mc U_J, \mc V_J, \on{Id})$ be a fine cover of $M$ with respect to $\on{Id}$ with $\ch_j = \et_j|_{\mc U_j}$. With $\ep$ as in Lem. \ref{lem:localize_flow}, take a vector field $u$ with $\| u \|_{L^1(I, \mf X^s(M))} < \ep$ and define $\ph(t)$ via \eqref{eq:def_phi_on_M}. Then $\ph(t)$ is well-defined and $\ph(t) \in \mc D^s(M)$ for all $t \in I$.
\end{lemma}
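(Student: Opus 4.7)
The plan is to check two claims in turn: that \eqref{eq:def_phi_on_M} specifies a consistent map $\ph(t):M\to M$, and that the resulting map lies in $\mc D^s(M)$. Throughout I read $\ch_j\i$ in \eqref{eq:def_phi_on_M} as the extension $\et_j\i|_{V_j}$ of $\ch_j\i:U_j\to\mc U_j$; this is legitimate because $\ch_j=\et_j|_{\mc U_j}$ and Lem.~\ref{lem:localize_flow} yields $\ol{\ps_j(t)(U_j)}\subseteq V_j=\et_j(\mc V_j)$, so the composition lands in $\mc V_j\subseteq M$.

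For consistency on overlaps, fix $x\in\mc U_i\cap\mc U_j$ and set $p_k(t)=\et_k\i(\ps_k(t,\ch_k(x)))$ for $k\in\{i,j\}$. I would introduce the equality set $E=\{t\in I:p_i(t)=p_j(t)\}$; it contains $0$ and is closed since $M$ is Hausdorff. To show $E$ is also open, fix $t_0\in E$ and note that the common value $q=p_i(t_0)=p_j(t_0)$ lies in $\mc V_i\cap\mc V_j$ (in $\mc V_i$ by the $i$-definition, in $\mc V_j$ by the $j$-definition). By continuity there is an interval $J\ni t_0$ on which both $p_i(J)$ and $p_j(J)$ are contained in the open set $\mc V_i\cap\mc V_j$, so the transported curves $\tilde p_k(t):=\et_i(p_k(t))\in V_i$ are defined. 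A chain-rule computation using $v_k=T\et_k\o u\o\et_k\i$ together with $\p_t\ps_k=v_k\o\ps_k$ (valid a.e.\ since $\ps_k$ takes values in $V_k$, where $w_k=v_k$) produces $\p_t p_k(t)=u(t,p_k(t))$ a.e.\ on $I$, and after composing with $\et_i$, $\p_t\tilde p_k(t)=v_i(t,\tilde p_k(t))$ a.e.\ on $J$. Both $\tilde p_i$ and $\tilde p_j$ therefore solve the same Carathéodory ODE on $V_i$ with right-hand side $v_i\in L^1(J,H^s(V_i,\R^d))$, Lipschitz in state with integrable Lipschitz bound $\|Dv_i(t)\|_\infty\leq C\|v_i(t)\|_{H^s}$ (from $H^s\hookrightarrow C^1$), and they share the value $\et_i(q)$ at $t_0$. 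Carathéodory uniqueness (App.~\ref{sec:caratheodory}) forces $\tilde p_i=\tilde p_j$ on $J$, hence $p_i=p_j$ on $J$, so $E$ is clopen and nonempty in the connected interval $I$, giving $E=I$.

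Having established that $\ph(t):M\to M$ is well-defined, its chart representation reads $\imath(\ph(t))=(\ps_j(t)|_{U_j})_{j\in J}$. Each $\ps_j(t)\in\on{Id}+H^s(\R^d,\R^d)$ by Thm.~\ref{thm:d2plus1_flow}, and restriction to the bounded Lipschitz domain $U_j$ lies in $H^s(U_j,\R^d)$; together with $\ol{\ph(t)(\mc U_j)}\subseteq\mc V_j$ this places $\ph(t)$ in $\mc O^s(\mc U_J,\mc V_J)\subseteq H^s(M,M)$. Since each $\ps_j(t)\in\mc D^s(\R^d)\subseteq\on{Diff}^1(\R^d)$ has invertible derivative at every point, $\ph(t)$ is a local $C^1$-diffeomorphism of $M$. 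For global bijectivity I would apply the construction just finished to the time-reversed field $\tilde u(s)=-u(t-s)\in L^1([0,t],\mf X^s(M))$, whose $L^1$-norm is at most $\|u\|_{L^1(I)}<\ep$, producing a map $\tilde\ph(t):M\to M$ of the same kind; then a repetition of the overlap-uniqueness argument applied to the concatenated curve $s\mapsto\ph(s)(x)$ on $[0,t]$ continued by $s\mapsto\tilde\ph(s-t)(\ph(t)(x))$ on $[t,2t]$---both realising the unique pointwise Carathéodory solution of the concatenated intrinsic ODE returning to $x$---forces $\tilde\ph(t)\o\ph(t)=\on{Id}_M$, and symmetrically $\ph(t)\o\tilde\ph(t)=\on{Id}_M$. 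Thus $\ph(t)\in\on{Diff}^1(M)\cap H^s(M,M)=\mc D^s(M)$.

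The main obstacle is the overlap consistency in the first part. The flows $\ps_i$ and $\ps_j$ are driven on $\R^d$ by the \emph{extensions} $w_i,w_j$, which coincide with the chart expressions $v_i,v_j$ only inside $V_i,V_j$; consequently $\ps_i$ and $\ps_j$ cannot be compared on $\R^d$ directly. The resolution is to descend back to $M$ and exploit Carathéodory uniqueness in a single chart, the crucial observation being that both candidates $p_i,p_j$ secretly satisfy the intrinsic ODE $\dot z=u(t,z)$ on $M$.
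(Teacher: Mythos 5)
Your proof is correct, and it rests on the same underlying fact as the paper's -- that $u$ is $\et_k$-related to the chart expressions $v_k$, so the various chart flows must be intertwined -- but you establish this differently and in considerably more detail. The paper simply quotes that related vector fields have conjugate flows, i.e.\ $\et_j\o\et_i\i\o\ps_i(t)=\ps_j(t)\o\et_j\o\et_i\i$ on the overlap, and then asserts invertibility of $\ph(t)$ directly from \eqref{eq:def_phi_on_M}. You instead prove the overlap identity pointwise by a connectedness argument: the agreement set $E\subseteq I$ is closed, and open because near any $t_0\in E$ both candidate curves solve the same Carath\'eodory ODE in a single chart, where uniqueness applies; this has the genuine merit of addressing explicitly the localization issue that $\ps_i,\ps_j$ are flows of the \emph{extensions} $w_i,w_j$ and therefore cannot be compared on $\R^d$ directly -- a point the paper passes over. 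Likewise your construction of the inverse via the time-reversed field $\tilde u(s)=-u(t-s)$, which obeys the same $L^1$-bound and hence admits the same chart construction, supplies the global bijectivity that the paper only asserts. Two small points to tighten: since $V_i$ need not be convex, the bound $\|Dv_i(t)\|_\infty\leq C\|v_i(t)\|_{H^s}$ does not by itself yield a Lipschitz estimate on $V_i$; it is cleaner to use the global Lipschitz bound for the extension $w_i(t)\in H^s(\R^d,\R^d)\hookrightarrow C^1_b$, which suffices because $\tilde p_i,\tilde p_j$ stay in $V_i$ where $w_i=v_i$. Also, the uniqueness you invoke at $t_0$ is needed in both time directions; Thm.~\ref{thm:caratheodory_exist} is stated forward in time, but the backward version follows by the usual time reversal (or directly from Gronwall's inequality).
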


\begin{proof}
To show that $\ph(t)$ is well-defined we need to show that whenever $\mc U_i \cap \mc U_j \neq \emptyset$, we have on the intersection the identity
\[
\et_i\i \o \ps_i(t) \o \et_i = \et_j\i \o \ps_j(t) \o \et_j\,.
\]
Omitting the argument $t$, we note that the identity $T\et_i \o u = v_i \o \et_i$ means that $u$ is $\et_i$-related to $v_i$, i.e., $u \sim_{\et_i} v_i$; hence on $\et_i(\mc U_i \cap \mc U_j)$ we have the relation $u_i \sim_{\et_j \o \et_i\i} u_j$, implying for the flows the identity
\[
\et_j\o \et_i\i \o \ps_i(t) = \ps_j(t) \o \et_j \o \et_i\i \,,
\]
and thus showing the well-definedness of $\ph(t)$. From \eqref{eq:def_phi_on_M} we see that $\ph(t) \in H^s(M,M)$, that $\ph(t)$ is invertible and that $\ph\i(t) \in H^s(M,M)$ as well. Thus $\ph(t) \in \mc D^s(M)$.
\end{proof}

The following lemma is a generalization of Lem. \ref{CompositionsLemma} to manifolds. Its main use will be when reformulated as a local equivalence of inner products in Sect. \ref{sec:strong_metrics}.

\begin{lemma}
\label{lem:composition_bound_M}
Let $s> d/2+1$ and $0 \leq s' \leq s$. Given $r > 0$ there exists a constant $C$, such that the inequality
\begin{equation}
\label{eq:composition_bound_M}
\|  v \o \ph \|_{H^{s'}} \leq C \| v \|_{H^{s'}}\,,
\end{equation}
holds for all $\ph \in D^s(M)$ that can be writted as $\ph = \on{Fl}_1(u)$ with $\| u\|_{L^1} < r$ and all $v \in H^{s'}(M)$ or $v \in \mf X^{s'}(M)$.
\end{lemma}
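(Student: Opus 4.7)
The plan is to import the $\R^d$-version (Lem.~\ref{lem:composition_bound_Rd}) chart by chart, using the fine cover machinery set up in Section~\ref{sec:flows_in_charts}. The main source of complication is that the flow $\ph$ does not preserve the chart domains $\mc U_j$; it only maps them into the slightly larger $\mc V_j$. This forces us to work with two charts per index and to push the coordinate expressions out to all of $\R^d$ via the extension operators $E_{V_j}$ so that Lem.~\ref{lem:composition_bound_Rd} becomes applicable.

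I would first reduce to the case of small vector fields. By Rem.~\ref{rem:decomposition_principle}, any $\ph = \on{Fl}_1(u)$ with $\|u\|_{L^1} < r$ decomposes as $\ph = \ph^1 \o \cdots \o \ph^N$, with $N$ depending only on $r$ and each $\ph^k = \on{Fl}_1$ of a reparameterization of $u|_{[t_k,t_{k+1}]}$ whose $L^1$-norm is smaller than a prescribed $\ep > 0$. Applying the small-$u$ inequality iteratively then yields \eqref{eq:composition_bound_M} with constant $C^N$. So it suffices to take $\ep$ small enough to meet the hypothesis of Lem.~\ref{lem:localize_flow}, and to prove the lemma under $\|u\|_{L^1} < \ep$.

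Fix then a fine cover $(\mc U_J,\mc V_J,\on{Id})$ of $M$ with $\ch_j = \et_j|_{\mc U_j}$, and let $w_j = E_{V_j}(T\et_j \o u \o \et_j\i) \in L^1(I, H^s(\R^d,\R^d))$ be the extended coordinate expressions of $u$, with associated $\R^d$-flows $\ps_j = \on{Fl}(w_j)$, so that $\ph|_{\mc U_j} = \et_j\i \o \ps_j(1) \o \ch_j$ as in~\eqref{eq:def_phi_on_M}. By Lem.~\ref{lem:localize_flow} we have $\ol{\ps_j(1)(U_j)} \subseteq V_j$, and by the norm equivalence~\eqref{eq:norms_equivalent_M} each $\|w_j\|_{L^1}$ is controlled by a constant multiple of $\|u\|_{L^1}$. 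For $v \in H^{s'}(M)$ (the vector-field case is identical after replacing $v \o \et_j\i$ by $T\et_j \o v \o \et_j\i$), compute the norm of $v \o \ph$ using the fine cover $\mc U_J$ and the norm of $v$ using the fine cover $\mc V_J$; both are equivalent to the intrinsic $H^{s'}(M)$-norm. Writing $\wt v_j = E_{V_j}(v \o \et_j\i) \in H^{s'}(\R^d)$, the identity $\et_j \o \ph \o \ch_j\i = \ps_j(1)|_{U_j}$ combined with $\ps_j(1)(U_j) \subseteq V_j$ gives
\[
(v \o \ph) \o \ch_j\i = \wt v_j \o \ps_j(1) \quad \text{on } U_j.
\]

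Now Lem.~\ref{lem:composition_bound_Rd} applies on $\R^d$, since $\ps_j(1) = \on{Fl}_1(w_j)$ with $\|w_j\|_{L^1}$ bounded in terms of $r$: it yields a constant $C_1$, independent of $v$ and $\ph$ within the prescribed class, with
\[
\|\wt v_j \o \ps_j(1)\|_{H^{s'}(\R^d)} \leq C_1 \|\wt v_j\|_{H^{s'}(\R^d)} \leq C_2 \|v \o \et_j\i\|_{H^{s'}(V_j)},
\]
the second inequality coming from boundedness of the extension operator $E_{V_j}$. Restricting the left side to $U_j$, squaring, and summing over the finite index set $J$ gives
\[
\|v \o \ph\|^2_{H^{s'}(M)} \;\lesssim\; \sum_{j \in J} \|(v \o \ph) \o \ch_j\i\|^2_{H^{s'}(U_j)} \;\leq\; C_2^2 \sum_{j \in J} \|v \o \et_j\i\|^2_{H^{s'}(V_j)} \;\lesssim\; \|v\|^2_{H^{s'}(M)},
\]
which is \eqref{eq:composition_bound_M} for small $u$. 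Combined with the initial reduction step, this proves the lemma. The only real obstacle is the bookkeeping in the two-chart setup described above; once one accepts the use of $E_{V_j}$ to absorb the mismatch between $\ph(\mc U_j)$ and $\mc U_j$, the argument reduces cleanly to the Euclidean case already established.
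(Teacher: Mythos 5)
Your proposal is correct and follows essentially the same route as the paper's proof: reduce to $\|u\|_{L^1}<\ep$ via the decomposition principle, pass to the fine cover, use the extension operators $E_{V_j}$ and the flows $\tilde\ps_j=\on{Fl}(E_{V_j}u_j)$ in $\R^d$, and then invoke Lem.~\ref{lem:composition_bound_Rd} chart by chart together with equivalence of the cover-based norms. The only cosmetic difference is that you justify $\ol{\ps_j(1)(U_j)}\subseteq V_j$ via Lem.~\ref{lem:localize_flow}, while the paper phrases the same smallness condition as $\ph\in\mc O^s(\mc U_I,\mc V_I)$ using continuity of $\on{Fl}_1$; these are interchangeable.
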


\begin{proof}
Choose a fine cover $(\mc U_I, \mc V_I, \on{Id})$ of $M$ with respect to $\on{Id}$ with $\ch_i = \et_i|_{\mc U_i}$. Let $\ep >0$ be such that if $\ph= \on{Fl}(u)$ with $\| u \|_{L^1} < \ep$ then $\ph \in \mc O^s(\mc U_I, \mc V_I)$. Such an $\ep$ exists, because $\mc O^s$ is open in $\mc D^s(M)$ and $\on{Fl}_1$ is continuous. We will show the inequality \eqref{eq:composition_bound_M} first for $r \leq \ep$.

Given $\ph = \on{Fl}_1(u)$ with $\| u \|_{L^1} < \ep$, define $\ph_i = \et_i \o \ph \o \et_i\i$ and $u_i = T\et_i \o u \o \et_i\i$, the extensions $\tilde u_i = E_{V_i}u_i$ and their flows $\tilde \ph_i = \on{Fl}_1(\tilde u_i)$. Given $f \in H^{s'}(M)$, the norm $\| f \o \ph\|_{H^{s'}(M)}$ is equivalent to
\[
\| f \o \ph\|_{H^{s'}(M)} \sim \sum_{i \in I} \left\| \left(f \o \ph\right)_i \right\|_{H^{s'}(U_i)}
\]
with $(f \o \ph)_i = f \o \ph \o \et_i\i$. Setting $f_i = f \o \et_i$, since $\ph \in \mc O^s$, we have the equality $(f \o \ph)_i = f_i \o \ph_i = E_{V_i}f_i \o \tilde \ph_i$ on $U_i$ and thus
\[
\left\| \left(f \o \ph\right)_i \right\|_{H^{s'}(U_i)} \leq
\left\| E_{V_i} f_i \o \tilde \ph_i \right\|_{H^{s'}(\R^d)} \leq
C_1 \| E_{V_i} f_i \|_{H^{s'}(\R^d)} \leq
C_2 \| f_i \|_{H^{s'}(V_i)}\,.
\]
The constant $C_1$ arises from Lem. \ref{lem:composition_bound_Rd}, since all $\tilde \ph_i$ are generated by vector fields with bounded norms. For $v \in \mf X^{s'}(M)$ the proof proceeds in the same way.

When $r > \ep$, we use the decomposition in Rem. \ref{rem:decomposition_principle} to write
\[
\ph = \ph^1 \o \ph^2 \o \dots \o \ph^N
\]
with $\ph^k \in \mc D^s(M)$, where $\ph^k = \on{Fl}_1(u^k)$ with $\| u^k\|_{L^1} < \ep$. Since $N$, the number of elements in the decomposition, depends only on $r$, the inequality \eqref{eq:composition_bound_M} can be shown inductively for $r$ of any size.
\end{proof}

To formulate the next lemma we need to introduce the geodesic distance of a right-invariant Riemannian metric on $\mc D^s(M)$. Fixing an inner product on $\mf X^s(M)$, we define
\[
\on{dist}^s(\ph, \ps) = \inf 
\left\{ \| u \|_{L^1([0,1], \mf X^s(M))} \,:\, \ps = \on{Fl}_1(u) \o \ph \right\}\,.
\]
See Sect. \ref{sec:strong_metrics} where it is shown, that $\on{dist}^s$ is indeed the geodesic distance associated to a Riemmnian metric and Sect. \ref{sec:completeness}, where it is shown, that the infimum is attained.

\begin{lemma}
\label{lem:local_lipschitz_M}
Let $s > d/2+1$. Given a fine cover $(\mc U_I, \mc W_I, \on{Id})$ of $M$ with respect to $\on{Id}_M$ with $\ch_i = \et_i$, there exists an $\ep>0$ and a constant $C$, such that for $\ph \in \mc D^s(M)$,
$\on{dist}^s(\on{Id}, \ph) < \ep$ implies $\ph \in \mc O^s(\mc U_I, \mc W_I)$ and such that the inequality
\[
\sum_{i \in I} \| \ph_i - \ps_i \|_{H^s(U_i)} \leq C\on{dist}^s(\ph, \ps)
\]
holds for all $\ph, \ps \in \mc D^s(M)$ inside the metric $\ep$-ball around $\on{Id}$ in $\mc D^s(M)$; here $\ph_i = \et_i \o \ph \o \et_i\i$ denotes the coordinate expression of $\ph$.
\end{lemma}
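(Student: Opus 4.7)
The plan is to unfold the definition of $\on{dist}^s$ as an $L^1$-infimum over generating vector fields, transport everything from $M$ to $\R^d$ via charts and Sobolev extension operators, and reduce the estimate to the $\R^d$ composition bound of Lem.~\ref{lem:composition_bound_Rd}.

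Pick an intermediate fine cover $(\mc U_I, \mc V_I, \on{Id})$ sharing the charts $\et_i$ with $\ol{\mc U_i}\subseteq \mc V_i$ and $\ol{\mc V_i}\subseteq \mc W_i$. By the same reasoning as in Lem.~\ref{lem:localize_flow}, I would choose $\ep > 0$ small enough that for any $w\in L^1(I,\mf X^s(M))$ with $\|w\|_{L^1} < 3\ep$, the $\R^d$-flow of $E_{V_i}(T\et_i \o w \o \et_i\i)$ sends $\ol{U_i}$ into $V_i$ at every $t\in I$, and the $\R^d$-flow of $E_{W_i}(T\et_i \o w \o \et_i\i)$ sends $\ol{V_i}$ into $W_i$ at every $t\in I$, for all $i\in I$. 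If $\on{dist}^s(\on{Id},\ph)<\ep$, writing $\ph = \on{Fl}_1(v)$ with $\|v\|_{L^1}<\ep$ and applying the first inclusion gives $\ol{\ph(\mc U_i)}\subseteq \mc V_i\subseteq \mc W_i$, i.e.\ $\ph\in\mc O^s(\mc U_I,\mc W_I)$.

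For the Lipschitz estimate, I would take $\ph,\ps$ in the $\ep$-ball around $\on{Id}$ and, given $\eta\in(0,\ep)$, pick $v$ with $\ph = \on{Fl}_1(v)$, $\|v\|_{L^1}<\ep$, and $u$ with $\ps = \on{Fl}_1(u)\o\ph$, $\|u\|_{L^1}<\on{dist}^s(\ph,\ps)+\eta\leq 2\ep+\eta < 3\ep$. Set $\tilde v_i := E_{V_i}(T\et_i\o v\o\et_i\i)$, $\tilde u_i := E_{W_i}(T\et_i\o u\o\et_i\i)$, and $\Phi_i := \on{Fl}_1(\tilde v_i)$, $\Gamma_i := \on{Fl}_1(\tilde u_i)$ in $\mc D^s(\R^d)$. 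By the choice of $\ep$, we then have $\Phi_i|_{U_i} = \ph_i$ with $\Phi_i(U_i)\subseteq V_i$, and on $V_i$ the map $\Gamma_i$ coincides with the coordinate expression $\et_i\o\on{Fl}_1(u)\o\et_i\i$ of the manifold flow. Thus on $U_i$,
\[
\ps_i - \ph_i \;=\; (\Gamma_i - \on{Id})\o\Phi_i \;=:\; g_i\,,
\]
and $g_i$ is defined on all of $\R^d$ and lies in $H^s(\R^d,\R^d)$. Since $\Phi_i$ is an $\R^d$-flow of $\tilde v_i$ with controlled $L^1$-norm, Lem.~\ref{lem:composition_bound_Rd} gives $\|g_i\|_{H^s(\R^d)}\leq C_1\|\Gamma_i - \on{Id}\|_{H^s(\R^d)}$. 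Writing $\Gamma_i - \on{Id} = \int_0^1 \tilde u_i(t)\o\on{Fl}_t(\tilde u_i)\ud t$ and applying Lem.~\ref{lem:composition_bound_Rd} under the integral together with the norm equivalence \eqref{eq:norms_equivalent_M} yields $\|\Gamma_i - \on{Id}\|_{H^s(\R^d)} \leq C_2\|u\|_{L^1(I,\mf X^s(M))}$. Since $g_i|_{U_i}=\ps_i-\ph_i$, the definition of the $H^s(U_i)$-norm as an infimum over extensions gives $\|\ps_i - \ph_i\|_{H^s(U_i)}\leq \|g_i\|_{H^s(\R^d)}$; summing over the finite index set $I$ and letting $\eta\to 0$ completes the argument.

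The delicate point is the two-level shrinkage of the cover, which is what ensures that the $\R^d$-flows $\Phi_i$ and $\Gamma_i$ genuinely coincide with the coordinate expressions of the manifold flows on the sets where they must be composed; once this geometric agreement is arranged, the Lipschitz bound is a mechanical application of Lem.~\ref{lem:composition_bound_Rd}.
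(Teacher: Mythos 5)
Your proposal is correct and takes essentially the same route as the paper: an intermediate shrunken cover, identification of the manifold flows with the $\R^d$-flows of extended coordinate vector fields for small $L^1$-norm, and Lem.~\ref{lem:composition_bound_Rd} to bound the coordinate difference by the $L^1$-norm of the connecting vector field before taking the infimum. The only cosmetic difference is that you factor $\ps_i-\ph_i=(\Gamma_i-\on{Id})\o\Phi_i$ and invoke the composition bound twice, whereas the paper writes the difference as the single integral $\int_0^1 \tilde v_i(t)\o\tilde\ps_i(t)\o\tilde\ph_i(1)\ud t$ and applies it once.
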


\begin{proof}
Choose first an intermediate cover $\mc V_I = (\mc V_i)_{i \in I}$, such that both $(\mc U_I, \mc V_I, \on{Id})$ and $(\mc V_I, \mc W_I, \on{Id})$ are fine covers of $M$ w.r.t. $\on{Id}$ and they all use the same coordinate charts $\et_i$. This implies in particular the inclusions $\ol{\mc U_i} \subseteq \mc V_i$ and $\ol{\mc V_i} \subseteq \mc W_i$. Let $\ep>0$ be such that
\[
\on{dist}^s(\on{Id}, \ph) < 3\ep
\quad \Rightarrow \quad
\ph \in \mc O^s(\mc U_I, \mc V_I)\text{ and }
\ph \in \mc O^s(\mc V_I, \mc W_I)\,.
\]
Note that since $\on{dist}^s(\on{Id}, \ph) = \on{dist}^s(\on{Id}, \ph\i)$, the same holds for $\ph\i$.

Let $\ph^1, \ph^2$ be inside the metric $\ep$-ball around $\on{Id}$ in $\mc D^s(M)$. Then
\[
\on{dist}^s(\ph^1, \ph^2) \leq 
\on{dist}^s(\ph^1, \on{Id}) + \on{dist}^s(\on{Id}, \ph^2) < 2 \ep\,.
\]
Let $v$ be a vector field with $\on{Fl}_1(v) = \ph^2 \o (\ph^1)\i$ and $\| v\|_{L^1} < 2\ep$. Denote its flow by $\ps(t) = \on{Fl}_t(v)$. Then
\[
\on{dist}^s(\on{Id}, \ps(t)) \leq
\on{dist}^s(\on{Id}, \ph^1) + \on{dist}^s(\ph^1, \ps(t)) < 3\ep\,,
\]
and thus $\ps(t) \in \mc O^s(\mc V_I, \mc W_I)$. Define $v_i(t) = T\et_i \o v(t) \o \et_i\i$ and $\ps_i(t) = \et_i \o \ps(t) \o \et_i\i$. Then $v_i(t) \in \mf X^s(W_i)$ and the following equality holds
\begin{equation}
\label{eq:flow_equality_Vi}
\left(\ph^2 \o (\ph^1)\i\right)_i(x) - x = \int_0^1 v_i(t, \ps_i(t, x)) \ud t
\quad \text{ for } x \in V_i\,.
\end{equation}
Because $\ph^1, (\ph^1)\i, \ph^2 \o (\ph^1)\i \in \mc O^s(\mc V_I, \mc W_I)$ we have
\begin{equation}
\label{eq:split_up_i_On_Vi}
\left(\ph^2 \o (\ph^1)\i\right)_i(x) =
\ph^2_i \o (\ph^1_i)(x)
\quad \text{ for } x \in V_i\,,
\end{equation}
and since $\ph^1 \in \mc O^s(\mc U_I, \mc V_I)$, equality \eqref{eq:flow_equality_Vi} together with \eqref{eq:split_up_i_On_Vi} implies
\begin{equation}
\label{eq:flow_equality_Ui}
\ph_i^2(x) - \ph^1_i(x) = \int_0^1 v_i(t) \o \ps_i(t) \o \ph^1_i(x) \ud t
\quad \text{ for } x \in U_i\,.
\end{equation}
Note that the domain, where the equality holds, has shrunk from $V_i$ to $U_i$. This is the reason for introducing the intermediate cover $\mc V_I$.

Since $\on{dist}^s(\on{Id}, \ph^1) < \ep$, we can write $\ph^1 = \on{Fl}_1(u^1)$ for a vector field $u^1$ with $\| u^1\|_{L^1} < \ep$. Set $\ph(t) = \on{Fl}_t(u^1)$. Introduce the coordinate expressions $u^1_i = T\et_i \o u^1 \o \et_i\i$, extend them to $\tilde u^1_i = E_{W_i}u^1_i$ and denote their flows by $\tilde \ph_i(t) = \on{Fl}_t(\tilde u_i)$. Since $\on{dist}^s(\on{Id}, \ph(t)) < \ep$, it follows that $\ph(t) \in \mc O^s(\mc U_I, \mc V_I)$ and thus $\ph_i(t,x) = \tilde \ph_i(t,x)$ for $x \in U_i$; in particular $\ph^1_i = \tilde \ph_i(1)$ on $U_i$.

Similarly we define the extension $\tilde v_i = E_{W_i} v_i$ and its flow $\tilde \ps_i(t) = \on{Fl}_t(\tilde v_i)$ and by the same argument we obtain $\ps_i(t, x) = \tilde \ps_i(t,x)$ for all $t$ and $x \in V_i$. The advantage is, that $\tilde \ph_i(1)$ and $\tilde \ps_i(t)$ are defined on all of $\R^d$ and are elements of $\mc D^s(\R^d)$. Thus \eqref{eq:flow_equality_Ui} can be written as
\begin{equation*}
\ph^2(x) - \ph^1_i(x) = \int_0^1 \tilde v_i(t) \o \tilde \ps_i(t) \o \tilde \ph_i(1)(x) \ud t
\quad \text{ for } x \in U_i\,,
\end{equation*}
and we can estimate
\begin{multline}
\label{eq:lip_estimate_vector_field_M}
\| \ph^2_i - \ph^1_i \|_{H^s(U_i)} \leq
\int_0^1 \left\| \tilde v_i(t) \o \tilde \ps_i(t) \o \tilde \ph_i(1) 
\right\|_{H^s(\R^d)} \ud t \leq \\
\leq C_1 \int_0^1 \left\| \tilde v_i(t) \right\|_{H^s(\R^d)} \ud t \leq
C_2 \| v \|_{L^1([0,1], \mf X^s(M))}\,.
\end{multline}
The constant $C_1$ appears from invoking Lem. \ref{lem:composition_bound_Rd}, since both $\tilde \ph_i$ and $\tilde \ps_i$ are generated by vector fields with bounded $L^1$-norms. Since $v$ was taken to be any vector field with $\on{Fl}_1(v) = \ph^2 \o (\ph^1)\i$, we can take the infimum over $v$ in \eqref{eq:lip_estimate_vector_field_M} to obtain
\[
\| \ph^1_i - \ph^2_i \|_{H^s(U_i)} \leq C_2\on{dist}^s(\ph^1, \ph^2)\,,
\]
from which the statement of the lemma easily follows.
\end{proof}

\section{Riemannian metrics on $\mc D^s(M)$}
\label{sec:strong_metrics}

\subsection{Strong metrics}

Let $(M,g)$ be $\R^d$ with the Euclidean metric or a closed $d$-dimensional Riemannian manifold and $s > d/2+1$. On the diffeomorphism group $\mc D^s(M)$ we put a right-invariant Sobolev metric $G^s$ of order $s$, defined at the identity by
\begin{equation}
\label{eq:general_sobolev_inner_product}
\langle u, v \rangle_{H^s} = \int_M g(u, Lv) \ud \mu\,,
\end{equation}
for $u, v \in \mf X^s(M)$, where $L \in OPS^{2s}_{1,0}$ is a positive, self-adjoint, elliptic operator of order $2s$. By right-invariance the metric is given by
\begin{equation}
\label{eq:def_rinv_metric_Ds}
G_\ph^s(X_\ph,Y_\ph) = \langle X_\ph \o \ph\i, Y_\ph\o \ph\i \rangle_{H^s} \,,
\end{equation}
for $X_\ph, Y_\ph \in T_\ph \mc D^s(M)$. Since $\mc D^s(M)$ is a topological group, the metric $G^s$ is a continuous Riemannian metric.

When $s=n$ is an integer and the operator is
\[
L=(\on{Id} + \De^n) \text{ or }
L=(\on{Id} + \De)^n\,,
\]
where $\De u = (\de du^\flat + d\de u^\flat)^\sharp$ is the positive definite Hodge Laplacian or some other combination of intrinsically defined differential operators with smooth coefficient functions, then one can show that the metric $G^n$ is in fact smooth on $\mc D^n(M)$. Since the inner products $G^n$ generate the topology of the tangent spaces, this makes $(\mc D^n(M), G^n)$ into a strong Riemannian manifold; see \cite{Ebin1970} and \cite{Misiolek2010} for details and \cite{Lang1999} for infinite-dimensional Riemannian geometry for strong metrics.

The existence of strong metrics is somewhat surprising, since there is a result by Omori \cite{Omori1978} stating that there exist no infinite-dimensional Banach Lie groups acting effectively, transitively and smoothly on a compacts manifold. $\mc D^s(M)$ acts effectively, transitively and smoothly on $M$. While $\mc D^s(M)$ is not a Lie group, but only a topological group with a smooth right-multiplication, the definition \eqref{eq:def_rinv_metric_Ds} of the metric uses the inversion, which is only a continuous operation. As it turns out one can have a smooth, strong, right-invariant Riemannian metric on a topological group, that is not a Lie group.

\begin{remark}
\label{rem:admissible_norms}
Most results in this paper -- in particular the existence and continuity of flow maps and estimates on the composition -- depend only on the topology of the Sobolev spaces and are robust with respect to changes to equivalent inner products. The smoothness of the metric does not fall into this category. Assume $\langle\cdot,\cdot \rangle_1$ and $\langle\cdot,\cdot \rangle_2$ are two equivalent inner products on $\mf X^s(M)$ and denote by $G^1$ and $G^2$ the induced right-invariant Riemannian metrics on $\mc D^s(M)$. Then the smoothness of $G^1$ does not imply anything about the smoothness of $G^2$. To see this, factorize the map $(\ph, X, Y) \mapsto G_\ph(X, Y)$ into
\[
\arraycolsep=2pt
\begin{array}{ccccc}
T\mc D^s \x_{\mc D^s} T\mc D^s &\to
&\mf X^s  \x \mf X^s &\to & \R \\
(\ph, X, Y) &\mapsto &(X \o \ph\i, Y \o \ph\i) &\mapsto
& \langle X \o \ph\i, Y \o \ph\i \rangle
\end{array}\,.
\]
Changing the inner product corresponds to changing the right part of the diagram. However the left part of the diagram is not smooth by itself, i.e., the map $(\ph, X) \mapsto X \o \ph\i$ is only continuous. The smoothness of the Riemannian metric is thus a property of the composition.
\end{remark}

\begin{openquestion*}
What class of inner products on $\mf X^s(M)$ induces smooth right-invariant Riemannian metrics on $\mc D^s(M)$? Does this hold for all $s > d/2+1$, non-integer, and all metrics of the form \eqref{eq:general_sobolev_inner_product}?
\end{openquestion*}

\subsection{Geodesic distance}
Given a right-invariant Sobolev metric $G^s$, the induced geodesic distance is
\[
\on{dist}^s(\ph,\ps) = \inf \left\{ \mc L(\et) \,:\, 
\et(0) = \ph, \et(1) = \ps \right\}\,,
\]
with the length functional
\[
\mc L(\et) = \int_0^1 \sqrt{G_{\et(t)}\left(\p_t \et(t), \p_t \et(t)\right)} \ud t\,,
\]
and the infimum is taken over all piecewise smooth paths. Due to right-invariance we have
\[
\mc L(\et) = \| \p_t \et \o \et\i\|_{L^1([0,1], \mf X^s(M))}\,,
\]
where $\mf X^s(M)$ is equipped with the inner product $\langle \cdot,\cdot \rangle_{H^s}$. Since piecewise smooth paths are dense in $L^1$ one can also compute the distance via
\[
\on{dist}^s(\ph, \ps) = \inf 
\left\{ \| u \|_{L^1([0,1], \mf X^s(M))} \,:\, \ps = \on{Fl}_1(u) \o \ph \right\}\,.
\]
It was shown in Thms. \ref{thm:d2plus1_flow} and \ref{thm:d2plus1_flow_M} that the flow-map is well-defined. To define the geodesic distance a continuous Riemannian metric is sufficient and thus the following results hold for $s > d/2+1$.

\subsection{Uniform equivalence of inner products}
Since the open geodesic ball around $\on{Id}$ of radius $r$ coincides with the set
\[
\left\{ \on{Fl}_1(u)\,:\, \| u\|_{L^1([0,1],\mf X^s(M))} < r \right\} =
\left\{ \ph \,:\, \on{dist}^s(\on{Id}, \ph) < r \right\}\,,
\]
we can reformulate Lem. \ref{lem:composition_bound_Rd} and Lem. \ref{lem:composition_bound_M} as follows.

\begin{corollary}
\label{cor:composition_bound_geodesic}
Let $s> d/2+1$ and $0 \leq s' \leq s$. Given $r > 0$ there exists a constant $C$, such that the inequality
\begin{equation*}
\|  v \o \ph \|_{H^{s'}} \leq C \| v \|_{H^{s'}}\,,
\end{equation*}
holds for all $\ph \in \mc D^s(M)$ with $\on{dist}^s(\on{Id}, \ph) < r$ and all $v \in H^{s'}(M)$ or $v \in \mf X^{s'}(M)$.
\end{corollary}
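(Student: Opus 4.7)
The corollary is essentially a direct translation of Lemmas~\ref{lem:composition_bound_Rd} and~\ref{lem:composition_bound_M} using the characterization of the open geodesic ball that is stated immediately above the corollary. My plan is therefore to unpack this translation in two short steps, handling the two cases $M=\R^d$ and $M$ compact in parallel.

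First, I would take $\ph \in \mc D^s(M)$ with $\on{dist}^s(\on{Id},\ph) < r$. By the definition
\[
\on{dist}^s(\on{Id}, \ph) = \inf \left\{ \| u \|_{L^1([0,1], \mf X^s(M))} \,:\, \ph = \on{Fl}_1(u) \right\},
\]
and since the infimum is strictly less than $r$, I can choose a vector field $u \in L^1([0,1], \mf X^s(M))$ with $\ph = \on{Fl}_1(u)$ and $\| u \|_{L^1} < r$. Note that the flow $\on{Fl}_1(u)$ is well-defined and $\mc D^s$-valued by Thm.~\ref{thm:d2plus1_flow} (for $M=\R^d$) respectively Thm.~\ref{thm:d2plus1_flow_M} (for compact $M$), so this step is legitimate.

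Second, I would apply the already-established composition bound. In the case $M=\R^d$, Lem.~\ref{lem:composition_bound_Rd} supplies a constant $C=C(r,s,s')$ such that
\[
\| v \o \ph \|_{H^{s'}} \leq C\, \| v \|_{H^{s'}}
\]
for every such $\ph$ and every $v \in H^{s'}(\R^d,\R^n)$; in particular this covers both $v \in H^{s'}(\R^d)$ and $v \in \mf X^{s'}(\R^d) = H^{s'}(\R^d,\R^d)$. In the case that $M$ is a closed manifold, the same conclusion follows from Lem.~\ref{lem:composition_bound_M}, which is formulated for both $v \in H^{s'}(M)$ and $v \in \mf X^{s'}(M)$. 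In either case the constant depends only on $r$, $s$ and $s'$, and not on the particular choice of $u$ above, which is exactly what the corollary asserts.

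There is no real obstacle here: both ingredients are already in place, and the only subtlety is making sure that the value of $C$ in Lem.~\ref{lem:composition_bound_Rd}/\ref{lem:composition_bound_M} depends on $u$ only through the bound $\|u\|_{L^1} < r$, which is precisely the statement of those lemmas. Hence no quantitative estimate needs to be redone; it suffices to observe that the hypothesis $\on{dist}^s(\on{Id},\ph) < r$ delivers, without loss, a generator $u$ with $\|u\|_{L^1} < r$, at which point the conclusion is immediate.
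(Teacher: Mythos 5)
Your proposal is correct and takes exactly the paper's route: the paper obtains the corollary by observing that the geodesic $r$-ball around $\on{Id}$ coincides with the set of time-one flows $\on{Fl}_1(u)$ of vector fields with $\| u \|_{L^1([0,1],\mf X^s(M))} < r$, and then simply reformulating Lem.~\ref{lem:composition_bound_Rd} and Lem.~\ref{lem:composition_bound_M}. Your unpacking of the infimum defining $\on{dist}^s$ to extract a generator $u$ with $\| u \|_{L^1} < r$, together with the appeal to Thms.~\ref{thm:d2plus1_flow} and \ref{thm:d2plus1_flow_M} for the existence of the $\mc D^s$-valued flow, is precisely this argument made explicit.
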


Since $\on{dist}^s(\on{Id}, \ph) = \on{dist}^s(\on{Id}, \ph\i)$, we have for some constant $C$ on every geodesic ball the inequalities
\[
C\i \| v \|_{H^s} \leq \|  v \o \ph\i \|_{H^{s}} \leq C \| v \|_{H^{s}}\,,
\]
stating that the inner products induced by $G^s(\cdot,\cdot)$ is equivalent to the inner product $\langle \cdot ,\cdot \rangle_{H^s}$ on every geodesic ball with a constant that depends only on the radius of the ball. 

This result enables us to prove that on $\R^d$ the $\mf X^s(\R^d)$-norm is Lipschitz with respect to the geodesic distance on any bounded metric ball. We will use this lemma to show that the geodesic distance is a complete metric.

\begin{lemma}
\label{lem:local_lipschitz_Rd}
Let $s > d/2+1$. Given $r>0$, there exists a constant $C$, such that the inequality
\[
\| \ph_1 - \ph_2 \|_{H^s} \leq C \on{dist}^s(\ph_1, \ph_2)\,,
\]
holds for all $\ph_1, \ph_2 \in \mc D^s(\R^d)$ with $\on{dist}^s(\on{Id}, \ph_i) < r$.
\end{lemma}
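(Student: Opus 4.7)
The plan is to mimic, but in the simpler $\R^d$ setting, the argument used for the manifold version (Lem.~\ref{lem:local_lipschitz_M}). On $\R^d$ there is no need for fine covers or extensions, so the proof collapses to a one-step estimate using the uniform composition bound from Cor.~\ref{cor:composition_bound_geodesic}.

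First I would fix $\ph_1, \ph_2$ with $\on{dist}^s(\on{Id}, \ph_i) < r$, so that by the triangle inequality $\on{dist}^s(\ph_1, \ph_2) < 2r$. By right-invariance, $\on{dist}^s(\ph_1, \ph_2) = \on{dist}^s(\on{Id}, \ph_2 \o \ph_1\i)$, so for any $\ep_0 > 0$ I can pick a vector field $v \in L^1(I, \mf X^s(\R^d))$ with
\[
\on{Fl}_1(v) = \ph_2 \o \ph_1\i, \qquad \| v \|_{L^1(I, H^s)} < \on{dist}^s(\ph_1, \ph_2) + \ep_0 < 2r + \ep_0.
\]
Let $\ps(t) = \on{Fl}_t(v)$ and consider the path $t \mapsto \ps(t) \o \ph_1$, which joins $\ph_1$ to $\ph_2$ inside $\mc D^s(\R^d)$.

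Next I would write, using absolute continuity of this path (guaranteed by the discussion of flows of $L^1$ vector fields in Sec.~\ref{sec:flow_def}),
\[
\ph_2 - \ph_1 = \int_0^1 \p_t \bigl( \ps(t) \o \ph_1 \bigr) \ud t = \int_0^1 v(t) \o \ps(t) \o \ph_1 \ud t
\]
as a Bochner integral in $H^s(\R^d,\R^d)$, and then take $H^s$-norms:
\[
\| \ph_2 - \ph_1 \|_{H^s} \leq \int_0^1 \| v(t) \o \ps(t) \o \ph_1 \|_{H^s} \ud t.
\]
The main (and only nontrivial) step is to bound the composition $v(t) \o (\ps(t) \o \ph_1)$. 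For this I use Cor.~\ref{cor:composition_bound_geodesic}: since
\[
\on{dist}^s(\on{Id}, \ps(t) \o \ph_1) \leq \on{dist}^s(\on{Id}, \ph_1) + \| v \|_{L^1(I, H^s)} < r + (2r + \ep_0),
\]
the diffeomorphisms $\ps(t) \o \ph_1$ all lie in a fixed geodesic ball of radius depending only on $r$ (say $4r$, taking $\ep_0 \leq r$). The corollary therefore yields a constant $C = C(r)$ such that $\| v(t) \o \ps(t) \o \ph_1 \|_{H^s} \leq C \| v(t) \|_{H^s}$ uniformly in $t$.

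Combining these,
\[
\| \ph_2 - \ph_1 \|_{H^s} \leq C \| v \|_{L^1(I,H^s)} \leq C \bigl( \on{dist}^s(\ph_1, \ph_2) + \ep_0 \bigr),
\]
and letting $\ep_0 \to 0$ gives the desired inequality with the constant $C$ depending only on $r$. The heart of the argument is the uniform-in-the-metric-ball composition estimate already established; everything else is a direct computation, which is why the $\R^d$ case is so much shorter than its manifold counterpart.
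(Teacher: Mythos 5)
Your proposal is correct and follows essentially the same route as the paper's proof: write $\ph_2 - \ph_1 = \int_0^1 v(t) \o \ps(t) \o \ph_1 \ud t$ for a near-optimal vector field $v$ and apply the uniform composition bound of Cor.~\ref{cor:composition_bound_geodesic} on a geodesic ball of radius depending only on $r$. The only cosmetic difference is that you pick an $\ep_0$-almost-minimizing $v$ and let $\ep_0 \to 0$, whereas the paper estimates with an arbitrary admissible $u$ of $L^1$-norm less than $2r$ and takes the infimum at the end.
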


\begin{proof}
We have
\[
\on{dist}^s(\ph_1, \ph_2) \leq 
\on{dist}^s(\ph_1, \on{Id}) + \on{dist}^s(\on{Id}, \ph_2) < 2 r\,.
\]
Let $u$ be a vector field with $\ph_2 = \on{Fl}_1(u) \o \ph_1$ and $\| u\|_{L^1} < 2r$. Denote its flow by $\ps(t) = \on{Fl}_t(u)$. Then
\[
\on{dist}^s(\on{Id}, \ps(t)) \leq
\on{dist}^s(\on{Id}, \ph_1) + \on{dist}^s(\ph_1, \ps(t)) < 3r\,,
\]
and thus using Cor. \ref{cor:composition_bound_geodesic} there exists a constant $C$, allowing us to estimate
\[
\| \ph_1 - \ph_2 \|_{H^s} \leq
\int_0^1 \| u(t) \o \ps(t) \o \ph_1 \|_{H^s} \ud t
\leq
C \int_0^1 \| u(t) \|_{H^s} \ud t\,.
\]
By taking the infimum over all vector fields we obtain the result.
\end{proof}

On an arbitrary compact manifold $M$ we can show only a local version of Lem. \ref{lem:local_lipschitz_Rd}, which we did in Lem. \ref{lem:local_lipschitz_M}. This local version will however be enough to show metric completeness.

\section{Completeness of diffeomorphism groups}
\label{sec:completeness}

In this section we will combine the results on flows of $L^1$-vector fields and estimates on the geodesic distance, to show that $\mc D^s(M)$ with a Sobolev-metric $G^s$ of order $s$ is a complete Riemannian manifold in all the senses of the theorem of Hopf--Rinow.

The completeness results are valid for the class of metrics satisfying the following hypothesis:
\begin{equation}
\label{eq:hyp_strong}
\tag{H} 
\parbox{11cm}{
Let $M$ be $\R^d$ or a closed manifold and let $\langle\cdot,\cdot\rangle_{H^s}$ be an inner product on $\mf X^s(M)$, such that the induced right-invariant metric
\begin{equation*}
G_\ph^s(X_\ph,Y_\ph) = \langle X_\ph \o \ph\i, Y_\ph\o \ph\i \rangle_{H^s} \,,
\end{equation*}
on $\mc D^s(M)$ is smooth, thus making $(\mc D^s(M), G^s)$ into a strong Riemannian manifold.
}
\end{equation}

As discussed in Sect. \ref{sec:strong_metrics}, this hypothesis is satisfied for a large class of Sobolev metrics of integer order.

First we show the existence of minimizing geodesics between any two diffeomorphisms in the same connected component. This extends Thm. 9.1 in \cite{Misiolek2010}, where existence of minimizing geodesics was shown only for an open and dense subset.

This existence result is shown using the direct method of the calculus of variations. Namely, the variational problem we consider consists of the minimization of an energy which is, under a change of variables, a weakly lower semi-continuous functional on a weakly closed constraint set. The change of variables is simply given by the vector field associated with the path and in the next lemma, we also prove that the constraint set is weakly closed.

\begin{lemma} \label{WeakConvergenceFlows}
Let $\psi_0, \psi_1 \in \mc D^s(M)$ be two diffeomorphisms and define
\[
\Om_{\ps_0} H^1 = \left\{ \ph \,:\, \ph(0) = \ps_0, \right\}
\subseteq H^1([0,1], \mc D^s(M))\,
\]
as well as
\[
\Om_{\ps_0,\ps_1} H^1 = \left\{ \ph \,:\, \ph(0) = \ps_0,\,\ph(1) = \ps_1 \right\}
\subseteq \Om_{\ps_0} H^1 \,
\]
which are submanifolds of the manifold $H^1([0,1], \mc D^s(M))$ of $H^1$-curves with values in $\mc D^s(M)$.
The map 
\begin{equation*}
\Theta :
\, \Om_{\ps_0} H^1 \to  L^2([0,1],\mf X^s(M))\,,\quad
 \ph \mapsto  \left( t \mapsto \partial_t \ph(t) \circ \ph(t)^{-1}\right)
\end{equation*}
is a homeomorphism for the strong topologies and the set $
\Theta(\Om_{\ps_0,\ps_1} H^1)$ is closed 
with respect to the weak topology on $L^2([0,1],\mf X^s(M))$.
\end{lemma}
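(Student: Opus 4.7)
The statement has two parts---the homeomorphism property of $\Theta$ in the strong topologies, and the weak closedness of $\Theta(\Om_{\ps_0,\ps_1} H^1)$---and I would treat them in turn.

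For the homeomorphism, my plan is to exhibit $\Theta^{-1}$ directly using the flow theorems. Given $u \in L^2(I,\mf X^s(M))$, I would set $\ph(t) := \on{Fl}_t(u) \o \ps_0$. Since $L^2(I) \hookrightarrow L^1(I)$ on a compact interval, Thm.~\ref{thm:d2plus1_flow} (resp.\ Thm.~\ref{thm:d2plus1_flow_M}) gives $\ph \in C(I,\mc D^s(M))$, and $\p_t \ph(t) = u(t) \o \ph(t)$ lies in $L^2(I,H^s)$ because $\ph$ stays in a bounded metric ball, so Cor.~\ref{cor:composition_bound_geodesic} applies uniformly. Hence $\ph \in \Om_{\ps_0}H^1$ and $\Theta(\ph) = u$, while $\Theta^{-1}(\Theta(\ph)) = \ph$ is just uniqueness of flows. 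Continuity of $\Theta^{-1}$ then falls out: $L^2$-convergence of $u_n$ forces $L^1$-convergence, hence uniform $\mc D^s$-convergence of the flows $\ph_n$, and $\p_t \ph_n = u_n \o \ph_n \to u \o \ph$ in $L^2(I,H^s)$ again by the composition bound. Continuity of $\Theta$ itself is the symmetric calculation, using that inversion is continuous in $\mc D^s(M)$.

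The main obstacle is the second part. I would start with $u_n = \Theta(\ph_n) \rightharpoonup u$ in $L^2(I,\mf X^s)$ with $\ph_n(0)=\ps_0$, $\ph_n(1)=\ps_1$; weak convergence forces $\|u_n\|_{L^2} \leq K$, hence $\|u_n\|_{L^1} \leq K'$. By Cor.~\ref{cor:composition_bound_geodesic} the set $\{\ph_n(t) - \on{Id} : n \in \mb N, t \in I\}$ is bounded in $H^s$, and the Cauchy--Schwarz inequality applied to the flow equation yields the equi-Hölder bound
\[
\| \ph_n(t)-\ph_n(\ta)\|_{H^s} \leq
\int_\ta^t \| u_n(\si)\o \ph_n(\si)\|_{H^s}\ud \si \leq C|t-\ta|^{1/2}.
\]
Since bounded sets of the separable reflexive space $H^s$ are metrizable and sequentially compact in the weak topology, a diagonal argument over a countable dense subset of $I$, combined with this equicontinuity, furnishes a subsequence (still denoted $\ph_n$) and a curve $\ph$ with $\ph_n(t) \rightharpoonup \ph(t)$ weakly in $H^s$ for every $t \in I$. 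Uniqueness of weak limits gives $\ph(0)=\ps_0$ and $\ph(1)=\ps_1$ automatically.

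The crux is then to identify $\ph$ as the $\mc D^s$-valued flow of $u$. For each fixed $x \in M$, pointwise evaluation is continuous on $H^s$, so $\ph_n(t,x) \to \ph(t,x)$, and I would pass to the limit in the pointwise flow equation
\[
\ph_n(t,x) - \ps_0(x) = \int_0^t u_n(\ta, \ph_n(\ta,x))\ud \ta,
\]
by splitting the right side as
\[
\int_0^t \bigl[u_n(\ta,\ph_n(\ta,x)) - u_n(\ta,\ph(\ta,x))\bigr]\ud\ta + \int_0^t u_n(\ta,\ph(\ta,x))\ud\ta.
\]
The first integral is bounded by $C\|u_n\|_{L^2(I,H^s)}\bigl(\int_0^t |\ph_n(\ta,x)-\ph(\ta,x)|^2\ud\ta\bigr)^{1/2}$ via the embedding $H^s \hookrightarrow C^1_b$ and Cauchy--Schwarz, and vanishes by pointwise convergence together with dominated convergence. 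The second is a bounded linear functional of $u_n \in L^2(I,\mf X^s)$ (evaluate at $(\ta,\ph(\ta,x))$, then integrate in $\ta$), so weak convergence converts it to $\int_0^t u(\ta,\ph(\ta,x))\ud\ta$. Thus $\ph$ satisfies the pointwise flow equation for $u$, and combining this with $\ph(t)-\on{Id} \in H^s$ and Lem.~\ref{lem:flow_Ds_implies_pointwise} shows $\ph$ is the $\mc D^s$-valued flow of $u$, so $u = \Theta(\ph) \in \Theta(\Om_{\ps_0,\ps_1}H^1)$.
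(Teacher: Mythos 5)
Your overall strategy is sound, but it takes a genuinely different route from the paper on the key point, the weak closedness. The paper never extracts a limit curve by compactness: it defines $\ph$ outright as the flow of the weak limit $u$ (which exists by Thm.~\ref{thm:d2plus1_flow}), and then proves $\ph^n(t,x)\to\ph(t,x)$ pointwise in $x$ and uniformly in $t$ by a Gronwall argument, where the linear term $\langle m_{t,x},u^n-u\rangle$ is shown to converge uniformly in $t$ via an Arzel\`a--Ascoli equicontinuity argument. You instead first produce a candidate limit curve by weak sequential compactness of bounded sets in $H^s$ plus the equi-H\"older bound (a weak-topology Arzel\`a--Ascoli), and then identify it by passing to the limit in the pointwise integral equation, handling the nonlinear term with the $C^1_b$-embedding, Cauchy--Schwarz and dominated convergence, and the linear term by weak convergence alone. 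Your identification step needs no Gronwall and no uniformity in $t$; the price is the extra compactness/diagonal extraction (harmless here, since the conclusion concerns only the weak limit $u$). Both arguments are correct in spirit, and your treatment of the homeomorphism part is at least as detailed as the paper's; like the paper, you only really argue the case $M=\R^d$ and leave the compact case to a fine-cover reduction.

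There is, however, one step that does not work as written: the final appeal to Lem.~\ref{lem:flow_Ds_implies_pointwise}. That lemma assumes $\ph\in C(I,\mc D^s(\R^d))$, i.e.\ that each $\ph(t)$ is already known to be a Sobolev \emph{diffeomorphism} and that the curve is norm-continuous into $\mc D^s$; knowing only $\ph(t)-\on{Id}\in H^s$ and the pointwise flow equation is not enough to invoke it. The norm continuity you can get from your equi-H\"older bound and weak lower semicontinuity of the norm, but the diffeomorphism property requires an extra argument. The quickest repair, consistent with your own Part 1: since $u\in L^2\subset L^1(I,H^s)$, Thm.~\ref{thm:d2plus1_flow} provides the $\mc D^s$-valued flow $\on{Fl}(u)$, and for each fixed $x$ the integral equation $y(t)=\ps_0(x)+\int_0^t u(\ta,y(\ta))\ud\ta$ has a unique solution (Gronwall, using that $u(\ta,\cdot)$ is Lipschitz with constant $C\|u(\ta)\|_{H^s}\in L^1$); hence $\ph(t)=\on{Fl}_t(u)\o\ps_0=\Th\i(u)(t)$, so $\ph\in\Om_{\ps_0}H^1$, $\Th(\ph)=u$, and with $\ph(1)=\ps_1$ you are done. (Alternatively, argue as in Lem.~\ref{lem:uniform_convergence_of_flows}, where the pointwise flow is a $C^1$-diffeomorphism by the cited result of Younes, and combine with $\ph(t)-\on{Id}\in H^s$.) With this two-line patch your proof is complete.
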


\begin{proof}
The definition of $\Theta$ is a direct consequence of Lem.~\ref{CompositionsLemma}. The inverse of $\Theta$ is given by the flow with initial condition $\ph(0) = \ps_0$, $\Th\i(u) = \left( t \mapsto \on{Fl}_t(u) \o \ps_0 \right)$. The flow belongs to $H^1([0,1],  \mc D^s(M))$ by Thm.~\ref{thm:d2plus1_flow} for $M = \R^d$ and by Thm.~\ref{thm:d2plus1_flow_M} for $M$ a closed manifold.

We now prove the second part of the lemma in the case $M = \R^d$. Consider a sequence $u^n \in L^2([0,1], H^s(\R^d,\R^d))$, converging weakly to $u$. Denote by $\ph^n$ and $\ph$ the respective flows. We will show that $\ph^n(t,x) \to \ph(t,x)$ pointwise in $x$ and uniformly in $t$. Because $s > d/2+1$, we have the continuous embedding $H^s(\R^d,\R^d) \hookrightarrow C^1_b(\R^d,\R^d)$, where $C^1_b$ denotes the space of $C^1$-functions with bounded derivatives, and we let $C > 0$ be such that $\| u \|_{C^1_b} \leq C \| u \|_{H^s}$ holds for all $u \in H^s$.

Take $(t,x) \in [0,1] \x \R^d$. Then
\begin{equation}\label{StandardIneq}
\begin{aligned}
| \ph^n&(t, x) - \ph(t, x) |  
\leq \left| \int_0^t  u^n(\ta, \ph^n(\ta, x)) - u(\ta, \ph(\ta, x))   \ud \ta \right| \\ 
& \leq \int_0^t | u^n(\ta, \ph^n(\ta, x)) - u^n(\ta, \ph(\ta, x) | \ud \ta
+ \left| \int_0^t u^n(\ta, \ph(\ta, x)) - u(\ta, \ph(\ta, x) \ud \ta \right| \,.
\end{aligned}
\end{equation}
For the first term we have
\begin{align*}
\int_0^t | u^n(\ta, \ph^n(\ta, x)) - u^n(\ta, \ph(\ta, x) | \ud s
&\leq \int_0^t \| u^n(\ta) \|_{C^1_b} \left| \ph^n(\ta, x) - \ph(\ta, x) \right| \ud \ta \\
&\leq \int_0^t C \| u^n(\ta) \|_{H^s} \left| \ph^n(\ta, x) - \ph(\ta, x) \right| \ud \ta\,.
\end{align*}

The second term can be written as $\left| \langle m_{t,x}, u^n - u \rangle \right|$, where
\[
\langle m_{t,x}, v \rangle = \int_0^t v(\ta, \ph(\ta, x)) \ud \ta\,,
\]
which is a linear map $m_{t,x} : L^2([0,1], H^s) \to \R^d$. Fix $x \in \R^d$ and consider the functions
\[
m^n: [0,1] \to \R^d\,,\, t \mapsto \langle m_{t, x}, u^n \rangle 
\]
They converge pointwise $m^n(t) = \langle m_{t,x}, u^n \rangle \to \langle m_{t,x}, u \rangle = m(t)$ for each $t \in [0,1]$. Because $u^n \rightharpoonup u$ weakly, the sequence $(u^n)_{n \in \mathbb N}$ is bounded in $L^2([0,1],H^s)$ and hence the following estimates show that the sequence $(m^n)_{n \in \mathbb N}$ is equicontinuous:
\begin{align*}
\left| \langle m_{t,x} - m_{r,x}, u^n \rangle \right|
&\leq \left| \int_r^t u^n(\ta, \ph(\ta, x)) \ud \ta \right|
\leq C \sqrt{|t-r|} \| u^n \|_{L^2([0,1], H^s)}\,.
\end{align*}
By Arzela-Ascoli it follows, that $\langle m_{t,x}, u^n \rangle \to \langle m_{t,x}, u \rangle$ uniformly in $t$.

Going back to \eqref{StandardIneq}, we define $A(t) = | \ph^n(t, x) - \ph(t, x) |$ and we have the estimate
\[
A(t) \leq \int_0^t C \| u^n(\ta) \|_{H^s} A(\ta) \ud \ta 
+ \left|\langle m_{t,x}, u^n -u \rangle\right|\,.
\]
Gronwall's inequality then leads to
\begin{align*}
| \ph^n(t, x) - \ph(t, x) |
&\leq \left|\langle m_{t,x}, u^n -u \rangle\right| + \\
&\,\,{}+C \int_0^t \left|\langle m_{\ta,x}, u^n -u \rangle\right| \| u^n(\ta)\|_{H^s}
\exp\left( C \|u^n\|_{L^1([0,1], H^s)} \right) \ud \ta\,.
\end{align*}
The uniform convergence of $\langle m_{\ta, x}, u^n-u \rangle \to 0$ shows that $\ph^n(t,x) \to \ph(t,x)$ pointwise in $x$ and uniformly in $t$.

Now consider a sequence of paths in $\ph^n \in \Om_{\ps_0,\ps_1} H^1$ such that $u^n = \Theta(\ph^n)$ converges weakly to $u = \Theta(\ph)$. We have to show that $\ph(1) = \ps_1$. We have $\ph^n(1) = \ps_1$ for all $n \in \mathbb N$ and using the pointwise convergence of the flow established above, also $\ph(1, x) = \lim_{n \to \infty} \ph^n(1,x) = \lim_{n \to \infty} \ps_1(x) = \ps_1(x)$. This concludes the proof for $M = \R^d $.

When $M$ is a compact manifold the result follows by reduction to $\R^d$ and the use of a fine cover.
\end{proof}

\begin{theorem}
\label{thm:geodesic_bvp}
Let $(\mc D^s(M), G^s)$ satisfy hypothesis \eqref{eq:hyp_strong}. Then any two elements of $\mc D^s(M)_0$ can be joined by a minimizing geodesic.
\end{theorem}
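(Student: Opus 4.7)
The plan is to apply the direct method of the calculus of variations to the energy functional
\[
E(\ph) = \tfrac{1}{2}\int_0^1 \| \p_t \ph(t) \o \ph(t)\i \|_{H^s}^2 \ud t = \tfrac{1}{2}\| \Theta(\ph) \|^2_{L^2([0,1], \mf X^s(M))}
\]
on the constraint set $\Om_{\ps_0, \ps_1}H^1$ supplied by Lem.~\ref{WeakConvergenceFlows}. The first preliminary is to check that this set is non-empty: because $\ps_0$ and $\ps_1$ lie in the same connected component of the smooth Hilbert manifold $\mc D^s(M)$, they can be joined by a smooth, hence $H^1$, path, so the constrained minimization problem is non-trivial.

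Next I would execute the direct method in the Hilbert space $L^2([0,1], \mf X^s(M))$ rather than on $\Om_{\ps_0, \ps_1}H^1$ itself. Let $(\ph^n) \subset \Om_{\ps_0, \ps_1}H^1$ be a minimizing sequence for $E$ and set $u^n = \Theta(\ph^n)$. Since $E(\ph^n)$ is bounded, $(u^n)$ is bounded in the Hilbert space $L^2([0,1], \mf X^s(M))$, so, after passing to a subsequence, $u^n \rightharpoonup u^*$ weakly. By the weak closedness part of Lem.~\ref{WeakConvergenceFlows}, $u^* = \Theta(\ph^*)$ for some $\ph^* \in \Om_{\ps_0, \ps_1}H^1$. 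The standard weak lower semicontinuity of the Hilbert norm then gives
\[
2E(\ph^*) = \| u^* \|^2_{L^2} \leq \liminf_{n \to \infty} \| u^n \|^2_{L^2} = 2\inf E,
\]
so $\ph^*$ minimizes $E$.

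To pass from energy minimizers to length minimizers I would invoke Cauchy--Schwarz,
\[
\mc L(\ph)^2 = \Bigl( \int_0^1 \| u(t) \|_{H^s} \ud t \Bigr)^2 \leq 2 E(\ph),
\]
with equality exactly when the speed $\|u(t)\|_{H^s}$ is constant in $t$. Any $H^1$ path can be reparametrized to constant speed without changing its length while dropping its energy to $\mc L^2/2$, so $\inf E = \tfrac{1}{2} \on{dist}^s(\ps_0, \ps_1)^2$, and $\ph^*$ therefore satisfies $\mc L(\ph^*) = \on{dist}^s(\ps_0, \ps_1)$, i.e.\ it is a minimizing path. Finally, because hypothesis \eqref{eq:hyp_strong} makes $(\mc D^s(M), G^s)$ into a strong Hilbert Riemannian manifold, the classical theory of smooth strong metrics (see \cite{Lang1999}) applies: a critical point of $E$ in $H^1$ with fixed endpoints satisfies the geodesic equation of $G^s$, which is a smooth ODE on $T\mc D^s(M)$, so $\ph^*$ is in fact a smooth geodesic.

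The main obstacle, namely the weak closedness of $\Theta(\Om_{\ps_0, \ps_1}H^1)$ in $L^2([0,1], \mf X^s(M))$, is already handled in Lem.~\ref{WeakConvergenceFlows}; the interest of the proof is precisely that the homeomorphism $\Theta$ converts a variational problem on paths in a topological (non-Lie) group into a variational problem over a weakly closed subset of a Hilbert space, where the direct method applies without surprises.
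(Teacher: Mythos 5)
Your proposal is correct and follows essentially the same route as the paper: the direct method applied to the energy $\|\Theta(\ph)\|^2_{L^2}$ using the weak closedness of $\Theta(\Om_{\ps_0,\ps_1}H^1)$ from Lem.~\ref{WeakConvergenceFlows}, weak lower semicontinuity of the norm, and then the smooth strong-metric theory (the paper cites Klingenberg's results on path spaces, with a footnote on their validity for strong Hilbert manifolds, where you cite \cite{Lang1999}) to conclude the minimizer is a geodesic. Your explicit Cauchy--Schwarz/reparametrization step relating energy and length minimizers is a welcome clarification that the paper leaves implicit, but it does not change the argument.
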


\begin{proof}
Let $\ps_0, \ps_1 \in \mc D^s(M)_0$ be two diffeomorphisms. Our aim is to minimize
\begin{equation}
\label{eq:energy_klingenberg}
\mc E(\ph) = \int_0^1 G_{\ph(t)}\left(\p_t \ph(t), \p_t \ph(t)\right) \ud t\,,
\end{equation}
on 
$\Om_{\ps_0,\ps_1} H^1$. We have,
\begin{equation*}
\mc E(\ph) = \int_0^1 \| \Theta(\ph) \|^2_{H^s} \ud t 
= \left\| \Th(\ph) \right\|^2_{L^2([0,1], \mf X^s)}\,.
\end{equation*}
Consider a minimizing sequence $\ph^n \in \Om_{\ps_0,\ps_1} H^1$, thus $\Theta(\ph^n) \in L^2([0,1],\mf X^s)$ is bounded and after extraction of a subsequence, we can assume that $\Theta(\ph^n)$ weakly converges to $\Theta(\ph^\ast)$. Lemma~\ref{WeakConvergenceFlows} ensures that $\ph^\ast \in \Om_{\ps_0,\ps_1} H^1$. Because the norm on $L^2([0,1], \mf X^s)$ is sequentially weakly lower semi-continuous, we have $\mc E(\ph^\ast) \leq \liminf \mc E(\ph_n)$. Thus $\ph^\ast$ is a minimizer of $\mc E$.

To show regularity of minimizers, we consider $\mc E$ given by~\eqref{eq:energy_klingenberg} as a functional on the space $H^1([0,1], \mc D^s(M))$. This functional is differentiable and the derivative is given by
\[
D \mc E(\ph).h = \int_0^1 G_{\ph(t)}(\p_t \ph(t), \nabla_{\p_t \ph(t)} h(t)) \ud t\,,
\]
with $\nabla$ denoting the covariant derivative of the metric $G$ \cite[Thm.~2.3.20]{Klingenberg1995}. The minimizer $\ph^\ast$ constructed above is thus a critical point of $\mc E$. By standard bootstrap methods it follows that critical points are smooth in time and thus solutions of the geodesic equation, e.g., it is shown in \cite[Lem.~2.4.3]{Klingenberg1995} that critical points of $\mc E$, restricted to paths with fixed endpoints, are geodesics on the underlying manifold.\footnote{In \cite{Klingenberg1995} the space of paths, $H^1([0,1], M)$, is constructed only for finite-dimensional manifolds $M$. However the results, that are necessary for us, remain valid with the same proofs, when $M$ is a strong Riemannian manifold modelled on a separable Hilbert space. The important part is that $[0,1]$ is finite dimensional and compact.}

\end{proof}

\begin{remark}
\label{rem:geodesic_bvp_subgroup}
Let $M$ and $\left(\mc D^s(M), G^s\right)$ satisfy the assumptions of Thm. \ref{thm:geodesic_bvp}. The same proof can be used to show the existence of minimizing geodesics for subgroups of the diffeomorphism group: the group $\mc D^s_{\mu}(M)$ of diffeomorphisms preserving a volume form $\mu$ or the group $\mc D^s_{\om}(M)$ of diffeomorphisms preserving a symplectic form $\om$. In fact the proof can be generalized to any closed, connected subgroup $\mc C$, that is also a Hilbert submanifold of $\mc D^s(M)$ since $T_{\on{Id}}\mc C$ is a closed Hilbert subspace of $\mf X^s$. Then $L^2([0,1],T_{\on{Id}}\mc C)$ is a closed subspace of $L^2([0,1],\mf X^s)$ and thus weakly closed. Therefore, the limit found in the proof will satisfy the boundary conditions and will also belong to $\mc C$.

\end{remark}

Next we show that the the group of diffeomorphisms with the induced geodesic distance is a complete metric space. There is a related result by Trouv\'e -- see \cite[Thm. 8.15]{Younes2010} -- which shows metric completeness for the groups of diffeomorphisms $\mc G_{\mc H}$, generated by an admissible space of vector fields $\mc H$; see Sect. \ref{sec:lddmm_framework} for details. Since we obtain $\mc D^s(\R^d)_0 = \mc G_{H^s(\R^d,\R^d)}$ in Thm. \ref{thm:equivalence}, this provides another proof of metric completeness of $\mc D^s(\R^d)_0$.

\begin{theorem}
\label{thm:diff_M_metric_completeness}
Let $\left(\mc D^s(M), G^s\right)$ satisfy hypothesis \eqref{eq:hyp_strong}. Then $\left(\mc D^s(M)_0, \on{dist}^s\right)$ is a complete metric space.
\end{theorem}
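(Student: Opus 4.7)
The plan is to take a Cauchy sequence $(\ph^n)_{n \in \mathbb N}$ in $(\mc D^s(M)_0, \on{dist}^s)$, extract a candidate limit in $H^s$ using the Lipschitz-type estimates of Lem.~\ref{lem:local_lipschitz_Rd} / Lem.~\ref{lem:local_lipschitz_M}, verify that this candidate actually lies in $\mc D^s(M)_0$, and finally upgrade $H^s$-convergence to convergence in $\on{dist}^s$. By right-invariance of $\on{dist}^s$, right-translation by $(\ph^0)\i$ is an isometry that preserves the subgroup $\mc D^s(M)_0$, so one may assume at the outset that $\on{dist}^s(\on{Id}, \ph^n) < r$ for some fixed $r > 0$ and all~$n$.

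Step~1 (limit candidate in $H^s$). For $M = \R^d$, Lem.~\ref{lem:local_lipschitz_Rd} applied on the metric $r$-ball gives a constant $C$ with $\|\ph^n - \ph^m\|_{H^s} \leq C\,\on{dist}^s(\ph^n,\ph^m)$, producing a limit $\ph^\infty \in \on{Id} + H^s(\R^d,\R^d)$. For compact $M$ I would cover the metric $r$-ball by finitely many of the small balls on which Lem.~\ref{lem:local_lipschitz_M} applies and iterate that estimate along a chain to obtain Cauchyness of the coordinate expressions $(\et_i \o \ph^n \o \et_i\i)$ in $H^s(U_i,\R^d)$, giving a limit $\ph^\infty \in H^s(M,M)$. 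Step~2 ($\ph^\infty \in \mc D^s(M)_0$). The remark following Lem.~\ref{lem:composition_bound_Rd} (and its analogue on $M$) supplies uniform bounds $\inf_x \det D\ph^n(x) \geq M_0 > 0$ and $\|\ph^n - \on{Id}\|_{H^s} \leq C_0$. Since $H^s \hookrightarrow C^1$ by Sobolev embedding, $D\ph^n \to D\ph^\infty$ uniformly, so $\det D\ph^\infty \geq M_0 > 0$. The characterization of $\mc D^s$ at the beginning of Sec.~2 forces $\ph^\infty \in \mc D^s(M)$; and since $\mc D^s(M)_0$ is open in the Hilbert manifold $\mc D^s(M)$ and $\ph^n \to \ph^\infty$ in the manifold topology, $\ph^\infty \in \mc D^s(M)_0$.

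Step~3 ($\on{dist}^s$-convergence). By right-invariance this reduces to $\on{dist}^s(\on{Id}, \de^n) \to 0$ where $\de^n = \ph^\infty \o (\ph^n)\i$; continuity of inversion and left-composition on $\mc D^s(M)$ together with Step~1 yield $\|\de^n - \on{Id}\|_{H^s} \to 0$. For $M = \R^d$ I would use the affine path $\gamma^n(t) = \on{Id} + t(\de^n - \on{Id})$, which lies in $\mc D^s(\R^d)$ for $n$ large by Lem.~\ref{CompositionsLemma}(1); its $G^s$-length is
\[
\mc L(\gamma^n) = \int_0^1 \|(\de^n - \on{Id}) \o \gamma^n(t)\i\|_{G^s}\,\ud t \leq C' \|\de^n - \on{Id}\|_{H^s} \to 0,
\]
where Lem.~\ref{CompositionsLemma}(2) controls the integrand uniformly in $t$ (the set $\{\gamma^n(t)\}$ sitting in a fixed bounded region with uniform Jacobian lower bound) and the $G^s$- and $H^s$-inner products are equivalent on $\mf X^s$. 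For compact $M$ the affine trick is unavailable; I would invoke hypothesis \eqref{eq:hyp_strong} to get the smooth Riemannian exponential $\exp_{\on{Id}} : \mf X^s(M) \to \mc D^s(M)$, which by standard strong-Hilbert Riemannian theory is a local diffeomorphism near $0$. Then for $n$ large $\de^n = \exp_{\on{Id}}(v^n)$ with $v^n \to 0$ in $G^s$, and the geodesic $t \mapsto \exp_{\on{Id}}(tv^n)$ witnesses $\on{dist}^s(\on{Id}, \de^n) \leq \|v^n\|_{G^s} \to 0$.

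\textbf{Main obstacle.} The delicate step is Step~3: the estimates of Lem.~\ref{lem:local_lipschitz_Rd}--\ref{lem:local_lipschitz_M} give only one direction (geodesic-small implies $H^s$-small), whereas here we need the reverse implication near $\on{Id}$ in order to turn the $H^s$-limit into a $\on{dist}^s$-limit. On $\R^d$ the affine structure supplies an explicit short path; on a general compact $M$ no such shortcut exists and one must rely either on the smoothness of the spray (via \eqref{eq:hyp_strong}) to invoke the local inverse of $\exp_{\on{Id}}$, or equivalently on a manifold chart of $\mc D^s(M)$ around $\on{Id}$ built from the pointwise exponential map of $M$, with a careful $G^s$-length estimate of the resulting path. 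Either way, the smoothness assumption (H) is precisely what unlocks this last step.
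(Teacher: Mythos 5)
Your $\R^d$ argument is essentially sound, though it is organised differently from the paper's: the paper first uses the Cauchy property plus right-invariance to place the \emph{entire} (tail of the) sequence in a small metric ball around $\on{Id}$, so that the $H^s$-limit lies in $\on{Id}+B_\ep(0)\subseteq\mc D^s(\R^d)$ for free, and then concludes $\on{dist}^s$-convergence simply from the coincidence of the metric and manifold topologies of a strong Riemannian metric; your Step~3 re-proves that last fact by hand (affine path on $\R^d$, exponential chart on $M$), which is legitimate and indeed uses hypothesis (H) exactly where it must be used. Two small caveats on $\R^d$: in the length estimate for $\gamma^n$ the right-trivialized velocity is $(\de^n-\on{Id})\o\gamma^n(t)\i$, so Lem.~\ref{CompositionsLemma}~(2) has to be applied with the diffeomorphism $\gamma^n(t)\i$, and the uniform Jacobian and $H^s$-bounds must be supplied for the \emph{inverses} (true for large $n$ since $\gamma^n(t)$ is uniformly $H^s$-close to $\on{Id}$, but it should be said); and your Step~2 leans on the remark following Lem.~\ref{lem:composition_bound_Rd}, which the paper only sketches and which its own proof never needs.

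The genuine gaps are in the compact case. First, in Step~1 you propose to cover the metric $r$-ball by finitely many of the small balls on which Lem.~\ref{lem:local_lipschitz_M} applies and to chain the estimate; but metric balls in the infinite-dimensional manifold $\mc D^s(M)_0$ are not totally bounded, so no finite subcover exists and the chaining scheme collapses (moreover Lem.~\ref{lem:local_lipschitz_M} is stated for a cover adapted to $\on{Id}$, and right-translating a chain segment changes the chart expressions, so the iterated estimates would not concatenate to a bound on $\|\ph^n_i-\ph^m_i\|_{H^s(U_i)}$ anyway). Second, in Step~2 the characterization of $\mc D^s$ by $\det D\ph>0$ is an $\R^d$ statement resting on Palais' criterion at infinity; on a compact manifold an orientation-preserving $C^1$ local diffeomorphism need not be injective (e.g.\ $z\mapsto z^2$ on $S^1$), so $\det D\ph^\infty\geq M_0>0$ alone does not put $\ph^\infty$ in $\mc D^s(M)$ -- one would need an extra degree-theoretic argument. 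Both gaps are closed at once by the reduction the paper makes: since $(\ph^n)$ is Cauchy, pass to a tail with all mutual distances below the $\ep$ of Lem.~\ref{lem:local_lipschitz_M} and right-translate so that $\ph^1=\on{Id}$; then the whole sequence lies in the small metric ball around $\on{Id}$, Lem.~\ref{lem:local_lipschitz_M} applies directly to give Cauchyness of the chart expressions, and the limit is $H^s$-close to $\on{Id}$ in charts, so it belongs to $\mc D^s(M)$ merely because $\mc D^s(M)$ is open in $H^s(M,M)$; after that your Step~3 (or the cited equality of metric and manifold topologies) finishes the proof.
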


\begin{proof}
{\bfseries Case: $M=\R^d$.}
Consider first the case $M=\R^d$. Let $\ep > 0$ be such that $\on{Id}+ B_\ep(0) \subset \mc D^s(\R^d)$, where $B_\ep(0)$ is the $\ep$-ball in $H^s(\R^d,\R^d)$. By Cor. \ref{cor:composition_bound_geodesic} there exists a constant $C$, such that the inequality
\begin{equation}
\label{eq:transfer_cauchy_Rd}
\| \ph - \ps \|_{H^s} \leq C \on{dist}^s(\ph, \ps)
\end{equation}
holds on the metric $\ep$-ball around $\on{Id}$ in $\mc D^s(\R^d)$.

Let $(\ph^n)_{n \in \mb N}$ be a Cauchy sequence in $\mc D^s(\R^d)_0$. We can assume without loss of generality that $\on{dist}^s(\ph^n, \ph^m) < \tfrac 12 \ep/C$ holds for all $n,m \in \mb N$ and since the distance is right-invariant we can also assume that $\ph^1 = \on{Id}$. Then \eqref{eq:transfer_cauchy_Rd} shows, that $\left(\on{Id} - \ph^n\right)_{n \in \mb N}$ is a Cauchy sequence in $H^s(\R^d,\R^d)$. Denote the limit by $\on{Id} - \ph^\ast$. From
\[
\| \on{Id} - \ph^\ast \|_{H^s} = \| \ph^1 - \ph^\ast \|_{H^s} \leq 
C \limsup_{n \to \infty} \on{dist}^s(\ph^1, \ph^n) \leq \tfrac 12 \ep
\]
it follows that $\ph^\ast \in \mc D^s(\R^d)$ and since the manifold topology coincides with the metric topology, we also have $\on{dist}^s(\ph^n, \ph^\ast) \to 0$. Thus $\mc D^s(\R^d)_0$ is complete.

{\bfseries Case: $M$ a closed manifold.}
The proof for a compact manifold proceeds in essentially the same way, the added complication is, that one has to work in a coordinate chart around the identity. Choose a fine cover $(\mc U_I, \mc V_I, \on{Id})$ of $M$ with respect to $\on{Id}$ such that $\et_i = \ch_i|_{\mc U_i}$. There exists $\ep_1 > 0$, such that if $\on{dist}^s(\on{Id}, \ph) < \ep_1$, then $\ph \in \mc O^s = \mc O^s(\mc U_I, \mc V_I)$. For 
$h \in \mc O^s \subseteq H^s(M,M)$ 
we define
\[
h_i = \et_i \o h \o \et_i\i,\, h_i \in \mc D^s(U_i,\R^d)\,.
\]
and by Lem. \ref{lem:local_lipschitz_M} there exists a constant $C$, such that the inequality
\begin{equation}
\label{eq:transfer_cauchy_M}
\| \ph_i - \ps_i \|_{H^s(U_i)} \leq C \on{dist}^s(\ph, \ps)
\end{equation}
is valid for all $i \in I$ and all $\ph, \ps \in \mc D^s(M)$ in the geodesic $\ep_1$-ball around $\on{Id}$. Furthermore, since $\mc D^s(M)$ is open in $H^s(M,M)$, there exists an $\ep_2>0$, such that
\begin{equation}
\label{eq:to_belong_to_diff_M}
h \in \mc O^s \text{ and } \| \on{Id} - h_i \|_{H^s(U_i)} < \ep_2,\, \forall i \in I \quad\Rightarrow\quad
h \in \mc D^s(M)\,.
\end{equation}

Given these preparations, let $(\ph^n)_{n \in \mb N}$ be a Cauchy sequence in $\mc D^s(M)_0$. We can assume w.l.o.g. that $\on{dist}^s(\ph^n, \ph^m) < \min(\ep_1, \tfrac 12 \ep_2/C)$ for all $n,m \in \mb N$ and because the distance is right-invariant also that $\ph^1 = \on{Id}$. It then follows from \eqref{eq:transfer_cauchy_M}, that for all $i \in I$, the sequences $(\ph^n_i)_{n \in \mb N}$ are Cauchy sequences in $H^s(U_i,\R^d)$. Denote their limits by $\ph_i^\ast$. Whenever $\mc U_i \cap U_j \neq \emptyset$, we have the compatibility conditions
\[
\et_i\i \o \ph^n_i \o \et_i = \et_j\i \o \ph^n_j \o \et_j
\quad\text{on }\mc U_i \cap \mc U_j\,,
\]
and since convergence in $H^s(U_i,\R^d)$ implies pointwise convergence, the compatibility conditions also hold for the limit $\ph_i^\ast$. Thus we can define a function $\ph^\ast$ on $M$ via $\ph^\ast|_{\mc U_i} = \et_i\i \o \ph^\ast_i \o \et_i$ and $\ph^n \to \ph^\ast$ in $H^s(M,M)$. We also have
\[
\| \on{Id} - \ph^n_i \|_{H^s(U_i)} \leq C \on{dist}^s(\on{Id}, \ph^n) \leq \tfrac 12 \ep_2\,,
\]
and so using \eqref{eq:to_belong_to_diff_M}, we see after passing to the limit that $\ph^\ast \in \mc D^s(M)$. As the manifold topology on $\mc D^s(M)_0$ coincides with the metric topology, it follows that $\on{dist}^s(\ph^n, \ph^\ast) \to 0$ and hence $\mc D^s(M)_0$ is a complete metric space.
\end{proof}

\begin{remark}
\label{rem:metric_completeness_subgroup}
Let $M$ and $\left(\mc D^s(M), G^s\right)$ satisfy the assumptions of Thm. \ref{thm:diff_M_metric_completeness}. Consider a closed, connected subgroup $\mc C$ and denote by $\on{dist}^s_{\mc C}$ the geodesic distance of the submanifold $(\mc C, G^s)$. Then $(\mc C, \on{dist}^s_{\mc C})$ is a complete metric space as well. This follows from the closedess of $\mc C$ and the inequality
$
\on{dist}^s(\ph, \ps) \leq \on{dist}^s_{\mc C}(\ph, \ps)\,,
$
which holds for all $\ph, \ps \in \mc C$.

Similar to Rem. \ref{rem:geodesic_bvp_subgroup} this applies in particular to the groups $\mc D^s_\mu(M)$ and $\mc D^s_\om(M)$ of diffeomorphisms preserving a given volume form or symplectic structure.
\end{remark}

We can now collect the various completeness properties diffeomorphism groups endowed with strong smooth Sobolev-type Riemannian metrics.

\begin{corollary}
\label{cor:diff_hopf_rinow}
Let $(\mc D^s(M), G^s)$ satisfy hypothesis \eqref{eq:hyp_strong}. Then
\begin{enumerate}
\item
$(\mc D^s(M), G^s)$ is geodesically complete.
\item
$(\mc D^s(M)_0, \on{dist}^s)$ is a complete metric space.
\item
Any two elements of $\mc D^s(M)_0$ can be joined by a minimizing geodesic.
\end{enumerate}
\end{corollary}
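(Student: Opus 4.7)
The three parts are collected results: statement (2) is Theorem \ref{thm:diff_M_metric_completeness}, and statement (3) is Theorem \ref{thm:geodesic_bvp}, so the only work left is to deduce (1). My plan is to reduce geodesic completeness of $\mc D^s(M)$ to that of $\mc D^s(M)_0$ using right-invariance, and then to infer geodesic completeness of the identity component from its metric completeness via the Hopf--Rinow-type theorem for strong Riemannian Hilbert manifolds.

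For the first reduction, note that for any $\ph_0 \in \mc D^s(M)$ the right translation $R_{\ph_0}$ is, by the very definition of right-invariance, a smooth isometry of $(\mc D^s(M), G^s)$, and it maps $\mc D^s(M)_0$ onto the connected component of $\ph_0$. Since $\mc D^s(M)$ is the disjoint union of its connected components, each of which is the isometric image of $\mc D^s(M)_0$, it is enough to prove that $\mc D^s(M)_0$ is geodesically complete.

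For this I would appeal to the standard infinite-dimensional argument (see, e.g., \cite{Lang1999}). Hypothesis \eqref{eq:hyp_strong} provides a smooth geodesic spray on the Hilbert manifold $\mc D^s(M)_0$, hence local existence and uniqueness of geodesics. Suppose $\ga : [0,T) \to \mc D^s(M)_0$ is a maximal geodesic with $T < \infty$. Because geodesics have constant speed $c = \sqrt{G^s_{\ga(0)}(\dot\ga(0),\dot\ga(0))}$, we have $\on{dist}^s(\ga(s),\ga(t)) \leq c\,|s-t|$ for $s,t \in [0,T)$, so $\ga(t)$ is $\on{dist}^s$-Cauchy as $t \to T^-$ and, by statement (2), converges to some $\ph^\ast \in \mc D^s(M)_0$. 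Because the metric is strong, the exponential map at $\ph^\ast$ is defined on a neighbourhood of the origin in $T_{\ph^\ast}\mc D^s(M)_0$ and its domain depends continuously on the base point, which gives a uniform lower bound on the lifetime of geodesics of speed $c$ issuing from a neighbourhood of $\ph^\ast$. Choosing $t$ close enough to $T$ lets us extend $\ga$ past $T$, contradicting maximality.

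The only point requiring real care is this last extension step: the argument genuinely uses strength of the metric, since for a merely weak Riemannian metric the exponential map need not even be defined as a smooth map with open domain. Under \eqref{eq:hyp_strong} this is not an obstacle but a straightforward application of the ODE theory on a Hilbert manifold, so I would not reproduce the details beyond the citation to \cite{Lang1999}.
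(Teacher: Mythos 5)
Your proposal is correct and takes essentially the same route as the paper: parts (2) and (3) are exactly Thms.~\ref{thm:diff_M_metric_completeness} and \ref{thm:geodesic_bvp}, and part (1) is deduced from metric completeness by the standard argument for strong Riemannian Hilbert manifolds, which is precisely the paper's citation to \cite{Lang1999} (the paper additionally notes the direct alternative \cite[Lem.~5.2]{GayBalmaz2015} for right-invariant strong metrics). Your explicit reduction to $\mc D^s(M)_0$ via right-translation isometries merely spells out a detail the paper leaves implicit.
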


\begin{proof}
Geodesic completeness follows from metric completeness; see \cite{Lang1999}. It is also shown in \cite[Lem. 5.2]{GayBalmaz2015}, that every strong right-invariant metric on a manifold, that is a topological group with a smooth right-multiplication, is geodesically complete.

Metric completeness is shown in Thm. \ref{thm:diff_M_metric_completeness} and the existence of minimizing geo\-de\-sics in Thm. \ref{thm:geodesic_bvp}. For the statements about subgroups see Rems. \ref{rem:geodesic_bvp_subgroup} and \ref{rem:metric_completeness_subgroup}.
\end{proof}

Following Rem.~\ref{rem:geodesic_bvp_subgroup} and Rem.~\ref{rem:metric_completeness_subgroup} the methods of proof can also be applied to the subgroups $\mc D^s_\mu(M)$ and $\mc D^s_\om(M)$ of diffeomorphisms preserving a volume form $\mu$ or a symplectic structure $\om$.

\section{Applications to diffeomorphic image matching}
\label{sec:lddmm_framework}

\subsection{The group generated by an admissible vector space}

Let $(\mc H, \langle \cdot ,\cdot \rangle_{\mc H})$ be a Hilbert space of vector fields, such that the norm on $\mc H$ is stronger than the uniform $C^1$-norm, i.e., $\mc H \hookrightarrow C^1_b(\R^d,\R^d)$.
We call such an $\mc H$ an \emph{admissible vector space}.
This embedding implies that pointwise evaluations are continuous $\R^d$-valued forms on $\mc H$: for $x \in \R^d$, $\on{ev}_x : f \in \mc H \mapsto f(x) \in \R^d$ is continuous and $\on{ev}_x^v(f) := \langle f(x), v \rangle$ is a linear form on $\mc H$; here $v \in \R^d$ and $\langle \cdot, \cdot \rangle$ denotes the Euclidean scalar product on $\R^d$. Such a space is called a \emph{reproducing kernel Hilbert space} and is completely defined by its kernel. This kernel is defined as follows:
denoting $K: \mc H^* \to \mc H$ the Riesz isomorphism between $\mc H^*$ (the dual of $\mc H$) and $\mc H$, the reproducing  kernel of $\mc H$ evaluated at points $x,y \in \R^d$, denoted by $\mathsf{k}(x,y) \in L(\R^d,\R^d)$, is defined by $\mathsf{k}(x,y)v = \on{ev}_y (K \on{ev}_x^v)$.

Given a time-dependent vector field $u \in L^1([0,1],\mc H)$, it admits a flow, i.e., there exists a curve $\ph \in C([0,1],\on{Diff}^1_+(\R^d))$ solving
\begin{equation}
\label{eq:admissible_flow}
\p_t \ph(t) = u(t) \o \ph(t)\,,\qquad \ph(0) = \on{Id}\,,
\end{equation}
for $t \in [0,1]$ almost everywhere. 

We define the group $\mc G_H$ consisting of all flows that can be generated by $\mc H$-valued vector fields,
\[
\mc G_{\mc H} = \left\{ \ph(1) \,:\, \ph(t) \text{ is the solution of \eqref{eq:admissible_flow} with } u \in L^1([0,1], \mc H) \right\}\,.
\]
Then $\mc G_{\mc H} \subseteq \on{Diff}^1_+(\R^d)$ and one can show that $\mc G_{\mc H}$ is a group. We can define a distance on $\mc G_{\mc H}$ via
\begin{equation}
\label{eq:dist_G_H}
\on{dist}^{\mc H}(\ph, \ps) = \inf \left\{ 
\int_0^1 \| u(t) \|_{\mc H} \ud t \,:\, u \in L^1([0,1],\mc H),\, \ps = \on{Fl}_{1}(u) \o \ps \right\}\,.
\end{equation}
Then $(\mc G_{\mc H}, \on{dist}^{\mc H})$ is a complete metric space and the infimum in \eqref{eq:dist_G_H} is always attained; furthermore there always exist minima with $\| u(t)\|_{\mc H}$ constant in $t$. See \cite[Sect. 8]{Younes2010} for details and full proofs.

The space $\mc H$, where $\mathsf{k}$ is the Gaussian kernel
\[
\mathsf{k}(x,y) = \exp\left(-\tfrac{|x-y|^2}{\si^2}\right) \on{Id}_{d\x d}\,,
\] 
or a sum of Gaussian kernels is widely used for diffeomorphic image matching. For numerical reasons, the kernel associated with Sobolev spaces is used less.

Note that from an analytic point of view the class of admissible vector spaces is rather large. It contains finite-dimensional vector spaces as well as spaces on real-analytic vector fields; it makes no assumptions about the decay of the vector fields at infinity other than that they are bounded; any closed subspace of an admissible vector space is itself admissible. Therefore there are limits as to how far a general theory can be developed: $\mc G_H$ does not need to have a differentiable structure; $\mc G_{\mc H}$ with the topology induced by the metric $\on{dist}^{\mc H}$ does not need to be a topological group; there is no known natural topology on $\mc G_{\mc H}$ making it a topological group.

\subsection{Equivalence of groups}
The situation is more promising, if $\mc H$ is a Sobolev space. In this case we can use Thm.~\ref{thm:d2plus1_flow} to characterize the group generated by $\mc H$: the group $\mc G_{H^s}$ coincides with the connected component of the identity of the group of Sobolev diffeomorphisms.

\begin{theorem}
\label{thm:equivalence}
Let $s > d/2+1$. Then
\[
\mc G_{H^s(\R^d,\R^d)} = \mc D^s(\R^d)_0\,.
\]
\end{theorem}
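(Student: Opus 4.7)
The plan is to prove the two inclusions separately; each direction is a rather direct application of Theorem \ref{thm:d2plus1_flow} together with the composition estimates of Lemma \ref{CompositionsLemma}. The inclusion $\mc G_{H^s} \subseteq \mc D^s(\R^d)_0$ is immediate: given $\ph = \on{Fl}_1(u)$ with $u \in L^1([0,1], H^s)$, Theorem \ref{thm:d2plus1_flow} produces the flow as a continuous curve $t \mapsto \on{Fl}_t(u)$ in $\mc D^s(\R^d)$ starting at $\on{Id}$ and ending at $\ph$, so $\ph$ lies in the path-component of $\on{Id}$.

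For the reverse inclusion $\mc D^s(\R^d)_0 \subseteq \mc G_{H^s}$, fix $\ph_1 \in \mc D^s(\R^d)_0$. Since $\mc D^s(\R^d) - \on{Id}$ is open in $H^s(\R^d,\R^d)$ and by Lemma \ref{CompositionsLemma}(1) every diffeomorphism admits an $H^s$-ball neighborhood inside $\mc D^s(\R^d)$, the connected open set $\mc D^s(\R^d)_0 - \on{Id}$ is path-connected by a piecewise linear path whose line segments remain in $\mc D^s(\R^d)$; a smooth reparametrization then yields a $C^1$ curve $\al : [0,1] \to \mc D^s(\R^d)_0$ with $\al(0) = \on{Id}$, $\al(1) = \ph_1$, and $t \mapsto \p_t \al(t) \in H^s$ continuous. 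Define the right-trivialized velocity $u(t) = \p_t \al(t) \o \al(t)\i$. Continuity of inversion in the topological group $\mc D^s(\R^d)$ makes $t \mapsto \al(t)\i$ continuous into $\mc D^s$, and Lemma \ref{CompositionsLemma}(2)--(3) yields continuity of composition $H^s \times \mc D^s \to H^s$, so $u \in C([0,1], H^s) \subseteq L^1([0,1], H^s)$.

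By construction $\al$ is a continuous $\mc D^s$-valued curve satisfying $\p_t \al(t) = u(t) \o \al(t)$ with $\al(0) = \on{Id}$, so by Lemma \ref{lem:flow_Ds_implies_pointwise} it is also the pointwise flow of $u$; since pointwise flows are unique, $\al(t) = \on{Fl}_t(u)$ and in particular $\ph_1 = \on{Fl}_1(u) \in \mc G_{H^s}$.

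The main obstacle I expect is the step $u \in L^1([0,1], H^s)$, which requires that right-composition $(v, \ps) \mapsto v \o \ps$ be continuous as a map $H^s \times \mc D^s \to H^s$, not merely bounded. This is handled by combining parts (2) and (3) of Lemma \ref{CompositionsLemma} together with continuity of inversion on $\mc D^s$; the existence of a smooth connecting path is essentially free from the Hilbert-manifold structure of $\mc D^s(\R^d)_0$ and is not a serious difficulty.
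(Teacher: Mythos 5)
Your proof is correct. The inclusion $\mc G_{H^s} \subseteq \mc D^s(\R^d)_0$ is exactly the paper's argument (Thm. \ref{thm:d2plus1_flow} gives a continuous $\mc D^s$-valued flow path from $\on{Id}$ to $\ph$), but for the reverse inclusion you take a genuinely different route. The paper only uses the \emph{local} linear path: for $\ps$ in a convex neighborhood $U$ of $\on{Id}$ the segment $(1-t)\on{Id}+t\ps$ has continuous right-trivialized velocity, so $U \subseteq \mc G_{H^s}$, and then the global statement is delegated to the fact that $\mc G_{H^s}$ is a group (quoted from the Trouv\'e--Younes framework): the subgroup generated by $U$ is open and closed in the connected set $\mc D^s(\R^d)_0$, hence equal to it. You instead join $\on{Id}$ to an arbitrary $\ph_1$ by a global polygonal path in the open connected set $\mc D^s(\R^d)_0 - \on{Id}$, reparametrize to $C^1$, and take the right-trivialized velocity of the whole path. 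Your version avoids invoking that $\mc G_{H^s}$ is a group, at the price of composing and inverting along the entire path, which needs uniform constants from Lem. \ref{CompositionsLemma} on the compact set $\{\al(t)\i : t\in[0,1]\}$ (the same device as in the proof of Lem. \ref{lem:flow_Ds_implies_pointwise}); the paper's version only ever composes with the explicit linear path near the identity. Two small inaccuracies, neither fatal: continuity of $(v,\ps)\mapsto v\o\ps$ as a map $H^s\x\mc D^s\to H^s$ does not follow from Lem. \ref{CompositionsLemma}(2)--(3) alone, since part (3) costs a derivative ($f\in H^{s'+1}$); you either insert a density argument (apply (3) to a smooth approximant and (2) to the error) or quote the topological-group/continuous-composition property of $\mc D^s$ from \cite{Inci2013} --- and in fact for $u\in L^1$ the uniform bound from (2) plus measurability already suffices, continuity of $t\mapsto u(t)$ is not needed. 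Also, Lem. \ref{lem:flow_Ds_implies_pointwise} is stated in the direction ``pointwise flow $\Rightarrow$ $\mc D^s$-valued identity'', which is the opposite of what you invoke; but the direction you need is immediate, because $u(t)\o\al(t)=\p_t\al(t)$ holds pointwise by construction, so $\al$ solves \eqref{eq:admissible_flow} directly and no uniqueness argument is required.
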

\begin{proof}
Let $U$ be a convex neighborhood around $\on{Id}$ in $\mc D^s(\R^d)$. Then every $\ps \in U$ can be reached from $\on{Id}$ via the smooth path $\ph(t) = (1-t) \on{Id} + t \ps$. Since $\ph(t)$ is the flow of the associated vector field $u(t) = \p_t \ph(t) \o \ph(t)\i$ and $u \in C([0,1], H^s)$, it follows that $\ps \in \mc G_{H^s}$. Thus $U \subseteq \mc G_{H^s}$ and since $\mc G_{H^s}$ is a group, the same holds also for the whole connected component containing $U$. This shows the inclusion $\mc D^s(\R^d)_0 \subseteq \mc G_{\mc H}$.

For the inclusion $\mc G_{H^s} \subseteq \mc D^s(\R^d)$ we have to show that given a vector field $u \in L^1([0,1], H^s(\R^d,\R^d))$ the flow defined by \eqref{eq:admissible_flow} is a curve not only on $\on{Diff}^1_+(\R^d)$, but also in $\mc D^s(\R^d)$. This is the content of Thm. \ref{thm:d2plus1_flow}.
\end{proof}

So when $\mc H = H^s$ is a Sobolev space, then the group $\mc G_{H^s}$ is a smooth Hilbert manifold as well as a topological group. If additionally the right-invariant metric induced by the inner product on $H^s$ is smooth, then the distance defined in \eqref{eq:dist_G_H} coincides with the geodesic distance. In particular paths of minimal length are smooth in time.

\begin{openquestion*}
When $\mc H$ is a Sobolev space and the induced right-invariant metric is smooth on $\mc D^s(\R^s)$, the corresponding geodesic equation is called the EPDiff equation. In order to write the geodesic equation, one only needs the kernel $\mathsf k(\cdot,\cdot)$ and it would be of interest to study its solutions for those kernels, where the induced groups don't carry a smooth structure.
\end{openquestion*}

\subsection{Karcher means of images}

Diffeomorphic image matching solves the minimization problem \cite{Beg2005}
\begin{equation}
\mc J(\ph) = \frac 12  \on{dist}^s(\on{Id},\ph)^2 + S(I \o \ph\i , J)\,,
\end{equation}
where $I,J \in \mc F(\R^d,\R)$ are respectively the source image and the target image. The term $S$ measures the similarity between the deformed image $I \o \ph\i$ and $J$. Its simplest form is the $L^2$ distance between the two functions. Therefore, optimal paths are geodesics on $\mc G_{\mc H}$. At a formal level, the situation can be understood as follows: The composition $I\o \ph\i$ is a left action of the group of diffeomorphisms $\mc G_{\mc H}$ on the space of images. 
The strong Riemannian structure on the group of diffeomorphisms $ \mc D^s(\R^d)$ and its completeness enable the application of results showed using proximal calculus on Riemannian manifolds \cite{Azagra2005}.

\begin{proposition}
Let $I \in L^1(\R^d,\R)$ be an image and $\mc{O}_I$ its orbit under the action of $ \mc D^s(\R^d)$. 
There exists a dense set $D \subset \mc{O}_I^n$ such that if $ (I_1,\ldots,I_n) \in D$, then there exists a unique minimizer in $\mc O_{I}$ of 
\begin{equation}
 \sum_{k=1}^n d(J,I_k)^2 \,,
\end{equation}
where $d$ is the induced distance on the orbit $\mc{O}_I$ defined by 
\[
d(I,J) = \inf_{\ph \in \mc D^s(\R^d)} \left\{ \on{dist}^s(\on{Id},\ph) \, | \, I \circ \ph\i = J \right\}\,.
\] 
In other words, the Karcher mean of a set of images in $D$ is unique.
\end{proposition}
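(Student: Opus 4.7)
The proof is an application of the proximal/Borwein--Preiss calculus on complete Riemannian manifolds developed in \cite{Azagra2005}, transferred to the orbit $\mc O_I$ via the quotient structure. First I would identify $\mc O_I$ with $\mc D^s(\R^d)_0 / \on{Stab}(I)$, where $\on{Stab}(I) = \{\si \in \mc D^s(\R^d)_0 : I \o \si\i = I\}$ is a closed subgroup: closedness follows from the continuity of $\si \mapsto I\o\si\i$ from $\mc D^s(\R^d)_0$ into $L^1$, which is a standard composition estimate for $I\in L^1$ and $s>d/2+1$. The distance $d$ from the statement is then the associated quotient pseudo-distance; metric completeness of $(\mc O_I, d)$ follows from Thm. \ref{thm:diff_M_metric_completeness}, and the infimum defining $d$ is attained by Thm. \ref{thm:geodesic_bvp}. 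Existence of a minimizer of $F(J) = \sum_{k=1}^n d(J, I_k)^2$ in $\mc O_I$ is obtained by the direct method used in Thm. \ref{thm:geodesic_bvp}: lift a minimizing sequence to $\mc D^s(\R^d)_0$ with uniformly bounded right-trivialized velocities, extract a weak limit in $L^2([0,1],\mf X^s)$, and use Lem. \ref{WeakConvergenceFlows} to conclude.

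\textbf{Paragraph 2.} The key ingredient from \cite{Azagra2005} is a density-of-uniqueness statement: for a complete Riemannian Hilbert manifold $M$ and a lower semi-continuous function $f:M\to\R$ bounded below, the set of $y \in M$ such that $f(\cdot) + d(\cdot, y)^2$ attains its infimum at a single point is dense in $M$. Iterating this principle, starting with $f_0 \equiv 0$ and successively adjoining $d(\cdot, I_k)^2$ for $k = 1,\ldots,n$, one obtains a dense set $D \subset \mc O_I^n$ of tuples $(I_1,\ldots,I_n)$ for which $F$ attains its infimum at a unique point of $\mc O_I$. Existence being already guaranteed, $D$ is exactly the desired set of tuples where the Karcher mean is well-defined.

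\textbf{Paragraph 3: Main obstacle.} The delicate point is that $\mc O_I$ is only a metric quotient of the Hilbert manifold $\mc D^s(\R^d)_0$, not \emph{a priori} a smooth Riemannian submanifold, so the Azagra--Ferrera principle cannot be invoked verbatim on $\mc O_I$. I would resolve this by lifting everything to $\mc D^s(\R^d)_0$, where Cor. \ref{cor:diff_hopf_rinow} supplies the complete smooth strong Riemannian structure required by \cite{Azagra2005}, and working with the $\on{Stab}(I)$-right-invariant functional $\tilde F(\al) = \sum_k d(I\o\al\i, I_k)^2$, which descends to $F$. Uniqueness of the Karcher mean in $\mc O_I$ is then equivalent to uniqueness of the minimizing $\on{Stab}(I)$-coset upstairs; closedness of $\on{Stab}(I)$ together with right-invariance of $\on{dist}^s$ allows the generic perturbation provided by the variational principle to be chosen transverse to the stabilizer action, isolating a single coset. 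This step -- ensuring the density of perturbations survives passage to the quotient -- is the technical heart of the argument.
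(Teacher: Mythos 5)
There is a genuine gap, and it sits exactly where you place "the technical heart" without resolving it. Your plan is to lift the Karcher functional to $\mc D^s(\R^d)_0$ and apply a generic-uniqueness principle for perturbations $f + \on{dist}^s(\cdot,\ps)^2$. But perturbing the data point $I_n$ inside the orbit does \emph{not} correspond upstairs to adding the squared Riemannian distance to a point: since $d(I\o\al\i, I\o\ps\i) = \on{dist}^s(\al\,\mc D_I, \ps\,\mc D_I)$, the relevant perturbation is the squared distance to the closed coset $\ps\,\mc D_I$, i.e. a distance-to-a-closed-set function, not $\on{dist}^s(\cdot,\ps)^2$. Moreover the lifted functional $\tilde F$ is invariant under right composition with the stabilizer $\mc D_I$, so its minimizers upstairs are never unique (they come in $\mc D_I$-cosets); if instead you break the invariance by adding $\on{dist}^s(\cdot,\ps)^2$, the unique minimizer you obtain says nothing about uniqueness of the minimizing coset of the invariant functional. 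The remark that closedness of $\mc D_I$ and right-invariance allow the perturbation "to be chosen transverse to the stabilizer action" is not an argument, and I do not see how to make it one; in addition, the principle you quote from \cite{Azagra2005} (dense set of $y$ for which $f + d(\cdot,y)^2$ attains its infimum at a single point, for general lsc $f$) is not the statement the paper uses, and you would in any case need it on $\mc O_I$, which, as you concede, is not known to be a Riemannian manifold. Two smaller inaccuracies: attainment of the infimum defining $d$ does not follow from Thm.~\ref{thm:geodesic_bvp} (that theorem joins two fixed diffeomorphisms; $d$ is a distance to a coset, and nearest points in closed sets need not exist in infinite dimensions), and completeness of $(\mc O_I,d)$ requires the closedness of the orbits, not just Thm.~\ref{thm:diff_M_metric_completeness}.

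The paper avoids all of this by never perturbing the candidate mean. It fixes the closed subset $C \subset \mc D^s(\R^d)^n$ built from stabilizer cosets (the tuples projecting to the diagonal of $\mc O_I^n$), observes that for $\Phi = (\ph_1,\dots,\ph_n)$ one has $\on{dist}^{s,n}(\Phi, C)^2 = \inf_{\ph} \sum_{k=1}^n d(I\o\ph_k\i, I\o\ph\i)^2$, i.e. the Karcher infimum for the tuple encoded by $\Phi$, and then applies \cite[Thm.~3.5]{Azagra2005} -- generic differentiability of the distance function to a closed set, with a unique minimizing geodesic to the set at such points -- on the product manifold $\mc D^s(\R^d)^n$, which \emph{is} a complete strong Riemannian Hilbert manifold in which any two points are joined by minimizing geodesics (Cor.~\ref{cor:diff_hopf_rinow}). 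The dense set of good points $\Phi$ upstairs then pushes forward to the dense set $D \subset \mc O_I^n$ because the orbit map is a continuous (indeed $1$-Lipschitz) surjection. So the generic perturbation acts on the data tuple in a genuine Riemannian manifold, and the only proximal-calculus input needed is the distance-to-a-closed-set theorem that \cite{Azagra2005} actually provides; your proposal, as written, does not reach this configuration and therefore does not close.
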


\begin{proof}
Since the action of $\mc D^s(\R^d)$ on $L^1(\R^d,\R)$ is continuous, the isotropy subgroup of $I$ denoted $\mc D_I$ is a closed subset of $\mc D^s(\R^d)$. 
Since each image $I_k$ lies in the orbit $\mc O_I$, there exist $\ph_k \in \mc D^s(\R^d)$, such that $I_k = I \o \ph_k\i$. 
Define
\[
C = \ph_1 \o \mc D_I \x \dots \x \ph_k \o \mc D_I
\]
Clearly, the set $C \subset \mc D^s(\R^d)^n$ is closed and nonempty.
Note that the product distance $\on{dist}^{s,n}$ on $\mc D^s(\R^d)^n$ derives from a smooth Riemannian metric with the property that any two points can be joined by a minimizing geodesic.
Using \cite[Thm. 3.5]{Azagra2005}, there exists a dense subset $D' \subset \mc D^s(\R^d)^n$ such that $\Phi \in \mc D^s(\R^d) \mapsto \on{dist}^{s,n}(\Phi,C)$ is differentiable at the points $\Phi \in D'$ and there exists a unique minimizing geodesic between $\Phi$ and $C$.
We have 
\begin{multline}
\on{dist}^{s,n}(\Phi, C)^2 = \inf_{\ph \in \mc D^s(\R^d)}\sum_{k=1}^n \on{dist}^{s}(\ph_k,\ph \, \mc D_I)^2  =  \inf_{\ph \in \mc D^s(\R^d)}\sum_{k=1}^n \on{dist}^{s}(\ph_k \mc D_I,\ph \, \mc D_I)^2 \\
 =  \inf_{\ph \in \mc D^s(\R^d)}\sum_{k=1}^n d(I \o \ph_k\i, I \o \ph\i)^2\,.
\end{multline}
Therefore, the image of $D'$ by action on $I$ gives the subset $D$ dense in $\mc O_I^n$. 
\end{proof}

This is a weak generalization of Ekeland's result \cite{Ekeland1978} on generic uniqueness of geodesics. 

\appendix

\section{Carathéodory Differential Equations}
\label{sec:caratheodory}

Let $I$ be an interval, $X$ a Banach space and $U \subseteq X$ an open subset of $X$. If $f:I \x U \to X$ is continuous and satisfies the Lipschitz condition
\[
\|f(t,x) - f(t,y)\|_X \leq L \| x-y\|_X
\]
for all $t \in I$ and $x,y \in U$, then the ODE
\begin{align*}
\p_t x(t) &= f(t,x(t)) \\
x(t_0) &= x_0\,,
\end{align*}
with $t_0 \in I$ and $x_0 \in U$ has a unique solution on some small interval $[t_0-\de, t_0+\de]$. This result is a straight-forward generalisation from ODEs in $\R^d$ and can be found in several books. See, e.g. \cite{Martin1976} or \cite{Deimling1977}.

To apply techniques from variational calculus it is convenient to work with vector fields $u \in L^2([0,1], \mc H)$ where $\mc H$ is a Hilbert space of $C^1_b$-vector fields on $\R^d$. The flow equation of these vector fields,
\[
\p_t \ph(t) = u(t) \o \ph(t)\,,
\]
leads to differential equations, whose right hand side is not continuous in $t$ any more, but only measurable. Such ODEs are called differential equations of {\it Carathéodory type}. Since Carathéodory differential equations might be unfamiliar to some readers, we will state here the results, that are used in this article. Following the exposition of \cite{Aulbach1996} we define:

\begin{definition}
\label{def:caratheodory}
Let $I$ be a nonempty interval, $X$ a Banach space and $U \subseteq X$ an open subset. A mapping $f : I \x U \to X$ is said to have the \emph{Carathéodory property} if it satisfies the following two conditions:
\begin{enumerate}
\item
For every $t \in I$ the mapping $f(t,\cdot) : U \to X$ is continuous.
\item
For every $x \in U$ the mapping $f(\cdot,x) : I \to X$ is strongly measurable (with respect to the Borel $\si$-algebras), i.e., $f(\cdot,x)$ is measurable and the image $f(I,x)$ is separable.
\end{enumerate}
\end{definition}

We have the following basic existence result for Carathéodory type differential equations.

\begin{theorem}
\label{thm:caratheodory_exist}
Given an interval $I=[a,b]$ and a Banach space $X$, let $U \subseteq X$ be an open subset and $f: I \x U \to X$ have the Carathéodory property. Given $x_0 \in U$ let $\ep$ be such that $B_\ep(x_0) = \{ x \,:\, |x-x_0|< \ep \} \subseteq U$. Furthermore let $m, \ell : I \to \R_{>0}$ be locally integrable functions such that the two estimates
\begin{align*}
\| f(t,x_1) - f(t,x_2)\|_X &\leq \ell(t)\, \| x_1 - x_2 \|_X \\
\| f(t,x) \|_X &\leq m(t)
\end{align*}
are valid for almost all $t \in I$ and for all $x,x_1,x_2 \in B_{\ep}(x_0)$. Finally let $\de > 0$ be such that
\begin{equation}
\label{eq:def_delta}
\int_a^{a+\de} m(t) \ud t < \ep\,.
\end{equation}
Then the differential equation
\[
\p_t x(t) = f(t,x(t))
\]
has a unique solution $\la : [a,a+\de] \to B_\ep(x_0)$ satisfying the initial condition $\la(a)=x_0$, i.e.
\[
\la(t) = x_0 + \int_a^t f(\ta, \la(\ta))\ud \ta
\]
holds for all $t \in [a, a+\de]$. The function $\la$ is absolutely continuous.
\end{theorem}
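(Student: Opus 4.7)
The plan is to solve the integral equation
\[
\la(t) = x_0 + \int_a^t f(\ta, \la(\ta)) \ud \ta
\]
by a Banach fixed-point argument on a closed ball in the space of continuous curves, using a Bielecki-type weighted norm to absorb the non-constant Lipschitz function $\ell(t)$. First I would set
\[
X_\de = \left\{ \la \in C([a,a+\de], X) \,:\, \la(t) \in \ol{B_{\ep}(x_0)} \text{ for all } t,\ \la(a) = x_0 \right\},
\]
which is a complete metric space under the sup-norm (and equivalent Bielecki norms). On $X_\de$ I would define the operator
\[
(T\la)(t) = x_0 + \int_a^t f(\ta, \la(\ta)) \ud \ta,
\]
and the whole game is to show (i)~$T$ is well defined, (ii)~$T$ maps $X_\de$ into itself, and (iii)~$T$ has a unique fixed point.

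For (i), the subtle point — and in my view the main technical obstacle — is to verify that $t \mapsto f(t, \la(t))$ is strongly measurable, since the Carathéodory hypothesis only gives separate measurability and continuity. The standard device is to approximate the continuous curve $\la$ uniformly by step functions $\la_n : [a, a+\de] \to X$ taking finitely many values in $\la([a,a+\de])$; by the Carathéodory property $t \mapsto f(t, \la_n(t))$ is measurable as a finite sum of measurable slices, and $f(t, \la_n(t)) \to f(t, \la(t))$ pointwise by continuity in the second argument, so the limit is measurable. Separability of the image follows since $\la([a,a+\de])$ is separable, so the values of $f(\cdot,\la(\cdot))$ lie in the countable union of the separable sets $f(I, x_k)$ over a dense sequence $x_k$. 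Combined with the bound $\|f(t,\la(t))\|_X \leq m(t) \in L^1$, Bochner integrability of the integrand follows.

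For (ii), for $\la \in X_\de$ we have
\[
\| (T\la)(t) - x_0 \|_X \leq \int_a^{a+\de} m(\ta) \ud \ta < \ep
\]
by hypothesis \eqref{eq:def_delta}, so $T\la \in X_\de$; absolute continuity of $T\la$ is immediate from the integral representation and the $L^1$ bound by $m$. For (iii), I would equip $X_\de$ with the Bielecki norm
\[
\| \la \|_B = \sup_{t \in [a, a+\de]} e^{-2 L(t)} \| \la(t) - x_0 \|_X, \qquad L(t) = \int_a^t \ell(\ta) \ud \ta,
\]
which is equivalent to the sup-norm on the compact interval. Using the Lipschitz estimate on $f$, for $\la_1, \la_2 \in X_\de$,
\[
\| (T\la_1)(t) - (T\la_2)(t) \|_X \leq \int_a^t \ell(\ta) \, e^{2 L(\ta)} \, \| \la_1 - \la_2 \|_B \ud \ta \leq \tfrac{1}{2} e^{2 L(t)} \| \la_1 - \la_2 \|_B,
\]
so $\| T\la_1 - T\la_2 \|_B \leq \tfrac12 \| \la_1 - \la_2 \|_B$ and $T$ is a strict contraction. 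Banach's fixed-point theorem produces a unique $\la \in X_\de$ with $T\la = \la$, which is the required absolutely continuous solution. Uniqueness on $[a, a+\de]$ among curves with values in $B_\ep(x_0)$ follows from the contraction estimate (or directly from a Gronwall argument applied to any two solutions).
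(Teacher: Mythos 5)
Your proof is correct in substance, but it is genuinely different in character from what the paper does: the paper does not reprove the theorem at all, it delegates it to the literature, observing that the statement is essentially Thm.~2.4 of Aulbach--Wanner, and that the only adjustment needed is the condition $\int_a^{a+\de} m(t)\ud t < \ep$ (borrowed from Filippov) whose sole role is to guarantee that the Picard operator $T(\mu)(t) = x_0 + \int_a^t f(\ta,\mu(\ta))\ud\ta$ maps continuous curves with values in $B_\ep(x_0)$ back into such curves; the rest of the cited proof is used unchanged. You instead give a self-contained argument: the explicit verification of strong measurability of $t \mapsto f(t,\la(t))$ via step-function approximation (this is exactly the point where the Carath\'eodory property enters, and it is the part most often glossed over), the domination by $m \in L^1$ giving Bochner integrability, the invariance of the ball from \eqref{eq:def_delta}, and a Bielecki weight $e^{-2L(t)}$ with $L(t)=\int_a^t \ell$ to get a contraction in one step rather than on successively shorter intervals. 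This buys a complete, quotable proof at the cost of length; the paper's citation buys brevity at the cost of self-containedness. One small point you should patch: you let your curves take values in $\ol{B_\ep(x_0)}$, but the hypotheses only place the open ball inside $U$ and only assert the bounds for $x \in B_\ep(x_0)$, so $f(t,\la(t))$ need not be defined (nor estimated) on the boundary sphere. The fix is routine: since $\int_a^{a+\de} m\ud t < \ep$, choose $r$ with $\int_a^{a+\de} m\ud t \le r < \ep$ and run the fixed-point argument on curves with values in $\ol{B_r(x_0)} \subset B_\ep(x_0)$; uniqueness among solutions valued in the open $\ep$-ball then follows from your Gronwall remark. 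With that adjustment the argument is complete.
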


\begin{proof}
This is essentially \cite[Thm. 2.4]{Aulbach1996}. The condition \eqref{eq:def_delta} is taken from \cite[Thm. 1.1.1]{Filippov1988} to ensure that the mapping
\[
T(\mu)(t) := x_0 + \int_{a}^t f(\ta, \mu(\ta)) \ud \ta
\]
maps continuous functions $\mu : [a,a+\de) \to B_{\ep}(x_0)$ to continuous functions with values in $B_{\ep}(x_0)$. The rest of the proof in \cite{Aulbach1996} can be used without change.
\end{proof}

For linear equations it is enough that the right hand side be integrable. See \cite[p. 55f]{Aulbach1996}.

\begin{theorem}
\label{thm:caratheodory_linear}
Given an interval $I=[a,b]$, a Banach space $X$ and an element $x_0 \in X$, let $A: I \to L(X)$ and $b : I \to X$ be Bochner integrable functions, i.e. both functions are strongly measurable and the real-valued functions $\| A(\cdot)\|_{L(X)}$ and $\| b(\cdot)\|_X$ are integrable. Then the differential equation
\[
\p_t x(t) = A(t).x(t) + b(t)
\]
has a unique solution $\la : I \to X$ satisfying the initial condition $\la(a) = x_0$.
\end{theorem}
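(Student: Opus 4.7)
The plan is to recast the ODE as a fixed-point equation for the Volterra-type operator
\[
T : C(I, X) \to C(I, X)\,,\qquad
T(\mu)(t) = x_0 + \int_a^t \left( A(\ta)\mu(\ta) + b(\ta) \right) \ud \ta\,,
\]
and then apply the Banach fixed-point theorem after replacing the usual supremum norm on $C(I,X)$ by a Bielecki-type weighted norm. The first thing I would verify is that $T$ is well defined: for continuous $\mu : I \to X$, the image $\mu(I)$ is separable, and combining this with strong measurability of $A$ one sees that $\ta \mapsto A(\ta) \mu(\ta)$ is strongly measurable; its norm is dominated by $\|A(\ta)\|_{L(X)} \cdot \sup_{s \in I} \|\mu(s)\|_X$, which is integrable by assumption, so the Bochner integral exists and $T(\mu)$ is absolutely continuous, in particular continuous.

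Next I would introduce $L(t) = \int_a^t \|A(\ta)\|_{L(X)} \ud \ta$ (finite and non-decreasing) and, for a parameter $w > 1$, the equivalent norm
\[
\| \mu \|_w := \sup_{t \in I} e^{-w L(t)} \| \mu(t) \|_X\,.
\]
The key calculation is
\begin{align*}
e^{-w L(t)} \| T\mu(t) - T\nu(t) \|_X
&\leq e^{-w L(t)} \int_a^t \|A(\ta)\|_{L(X)}\, e^{w L(\ta)} \cdot e^{-w L(\ta)} \| \mu(\ta) - \nu(\ta) \|_X \ud \ta \\
&\leq \| \mu - \nu \|_w \cdot e^{-w L(t)} \int_a^t \|A(\ta)\|_{L(X)}\, e^{w L(\ta)} \ud \ta \\
&= \| \mu - \nu \|_w \cdot \frac{1 - e^{-w L(t)}}{w}
\leq \frac{1}{w} \| \mu - \nu \|_w\,,
\end{align*}
using the substitution $u = w L(\ta)$ in the middle integral. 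Choosing $w = 2$, the operator $T$ becomes a $\tfrac{1}{2}$-contraction on the Banach space $(C(I,X), \|\cdot\|_w)$, so the Banach fixed-point theorem produces a unique $\la \in C(I, X)$ with $T \la = \la$. The advantage of the Bielecki weight over a naive contraction argument is that it delivers existence and uniqueness on the entire interval $I$ in one stroke, without having to iterate local existence and control the length of each step via \eqref{eq:def_delta}.

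Finally I would extract the ODE from the fixed-point identity. Evaluating $T\la = \la$ at $t = a$ gives $\la(a) = x_0$; the fundamental theorem of calculus for the Bochner integral (applied to the integrable map $\ta \mapsto A(\ta)\la(\ta) + b(\ta)$) shows that $\la$ is absolutely continuous and that $\p_t \la(t) = A(t) \la(t) + b(t)$ for almost every $t \in I$. Uniqueness of any solution of the differential equation, not just of the fixed point, then follows by a Gronwall argument: two solutions satisfy $w(t) := \|\la_1(t) - \la_2(t)\|_X \leq \int_a^t \|A(\ta)\|_{L(X)} w(\ta) \ud \ta$, hence $w \equiv 0$. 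The main technical point to watch is strong measurability of $\ta \mapsto A(\ta)\mu(\ta)$ for a merely continuous $\mu$; this is the place where separability of $\mu(I)$ is essential, and I would handle it cleanly by approximating $\mu$ uniformly on $I$ by simple (countably-valued) functions and passing to the limit, using the joint continuity of the evaluation map $L(X) \times X \to X$.
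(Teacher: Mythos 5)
Your proof is correct, but it is worth noting that the paper does not actually prove this statement: it records the linear case as a known fact and points to the literature, having worked out in detail only the nonlinear local existence result, Thm.~\ref{thm:caratheodory_exist}. Your argument is therefore a genuine, self-contained alternative rather than a reproduction. The Bielecki weight $e^{-wL(t)}$ with $L(t)=\int_a^t \|A(\tau)\|_{L(X)}\,d\tau$ is exactly the right device: it makes $T$ a global contraction on all of $C(I,X)$ at once, whereas the route suggested by the paper's own nonlinear theorem would require iterating local solutions on subintervals chosen via \eqref{eq:def_delta} and patching them together. The individual steps check out: strong measurability of $\tau\mapsto A(\tau)\mu(\tau)$ follows, as you say, from approximating $\mu$ uniformly by countably-valued functions together with the bound $\|A(\tau)(\mu_n(\tau)-\mu(\tau))\|_X\leq\|A(\tau)\|_{L(X)}\,\|\mu_n-\mu\|_\infty$; the identity $\int_a^t\|A(\tau)\|_{L(X)}e^{wL(\tau)}\,d\tau=\bigl(e^{wL(t)}-1\bigr)/w$ is legitimate because $L$ is nondecreasing and absolutely continuous, so $e^{wL}$ is absolutely continuous with derivative $wL'e^{wL}$ almost everywhere; and $\|\cdot\|_w$ is equivalent to the supremum norm since $0\leq L\leq L(b)<\infty$, so $(C(I,X),\|\cdot\|_w)$ is complete and the Banach fixed-point theorem applies. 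Two cosmetic remarks: you reuse the letter $w$ both for the weight parameter and for $\|\la_1(t)-\la_2(t)\|_X$ in the final step, and that final Gronwall argument is actually redundant, since any Carath\'eodory solution of the ODE with $\la(a)=x_0$ is automatically a fixed point of $T$, so uniqueness of the fixed point already gives uniqueness of solutions.
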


The theory of Carathéodory type differential equations can be found in \cite{Coddington1955} and \cite{Filippov1988} for $\dim X< \infty$ and in \cite{Aulbach1996}, \cite{Deimling1977} or \cite{Younes2010} for infinite-dimensional spaces.

\printbibliography

\end{document}